\theoremstyle{plain}
\newtheorem{theorem}{Theorem}[section]
\newtheorem{lemma}[theorem]{Lemma}
\newtheorem{prop}[theorem]{Proposition}
\newtheorem{cor}[theorem]{Corollary}
\newcounter{introcounter}
\newtheorem{introthm}[introcounter]{Theorem}
\theoremstyle{definition}
\newtheorem{definition}[theorem]{Definition}
\theoremstyle{remark}
\newtheorem{claim}[theorem]{Claim}
\renewcommand{\tilde}{\widetilde}
\renewcommand{\bar}{\overline}
\newcommand{\bbC}{\mathbb{C}}
\newcommand{\bbD}{\mathbb{D}}
\newcommand{\bbH}{\mathbb{H}}
\newcommand{\bbK}{\mathbb{K}}
\newcommand{\bbN}{\mathbb{N}}
\newcommand{\bbP}{\mathbb{P}}
\newcommand{\bbQ}{\mathbb{Q}}
\newcommand{\bbR}{\mathbb{R}}
\newcommand{\bbZ}{\mathbb{Z}}
\newcommand{\bbLL}{\mathbb{LL}}
\newcommand{\ILL}{\mathrm{ILL}}
\newcommand{\Raw}{\Rightarrow}
\newcommand{\raw}{\rightarrow}
\newcommand{\daw}{\downarrow}
\newcommand{\upaw}{\uparrow}
\newcommand{\Lra}[1]{\xrightarrow[L]{#1}}
\newcommand{\Rra}[1]{\xrightarrow[R]{#1}}
\newcommand{\xra}{\xrightarrow}
\newcommand{\cug}{\subseteq}
\newcommand{\sug}{\supseteq}
\newcommand{\into}{\longmapsto}
\newcommand{\undx}{{\underline{x}}}
\newcommand{\undy}{{\underline{y}}}
\newcommand{\undz}{{\underline{z}}}
\newcommand{\undw}{{\underline{w}}}
\newcommand{\undtheta}{\underline{\theta}}
\newcommand{\undH}{{\underline{\mathbf{H}}}}
\newcommand{\calA}{\mathcal{A}}
\newcommand{\calB}{\mathcal{B}}
\newcommand{\calC}{\mathcal{C}}
\newcommand{\calD}{\mathcal{D}}
\newcommand{\calF}{\mathcal{F}}
\newcommand{\calG}{\mathcal{G}}
\newcommand{\calH}{\mathcal{H}}
\newcommand{\calI}{\mathcal{I}}
\newcommand{\calK}{\mathcal{K}}
\newcommand{\calL}{\mathcal{L}}
\newcommand{\calM}{\mathcal{M}}
\newcommand{\calO}{\mathcal{O}}
\newcommand{\calP}{\mathcal{P}}
\newcommand{\calQ}{\mathcal{Q}}
\newcommand{\calR}{\mathcal{R}}
\newcommand{\calS}{\mathcal{S}}
\newcommand{\calT}{\mathcal{T}}
\newcommand{\calU}{\mathcal{U}}
\newcommand{\calV}{\mathcal{V}}
\newcommand{\calX}{\mathcal{X}}
\newcommand{\bfH}{\mathbf{H}}
\newcommand{\bfP}{\mathbf{P}}
\newcommand{\bfQ}{\mathbf{Q}}
\newcommand{\bfR}{\mathbf{R}}
\newcommand{\bfM}{\mathbf{M}}
\newcommand{\bfS}{\mathbf{S}}
\newcommand{\bfU}{\mathbf{U}}
\newcommand{\bfV}{\mathbf{V}}
\newcommand{\bfN}{\mathbf{N}}
\newcommand{\frh}{\mathfrak{h}}
\newcommand{\frD}{\mathfrak{D}}
\newcommand{\frP}{\mathfrak{P}}
\newcommand{\frQ}{\mathfrak{Q}}
\newcommand{\frR}{\mathfrak{R}}
\newcommand{\frgl}{\mathfrak{gl}}
\newcommand{\frsl}{\mathfrak{sl}}
\newcommand{\frg}{\mathfrak{g}}
\newcommand{\frb}{\mathfrak{b}}
\newcommand{\sbim}{{\mathbb{S}Bim}}
\newcommand{\bsbim}{\mathbb{BS}Bim}
\newcommand{\cols}{{\color{red} s}}
\newcommand{\colt}{{\color{blue} t}}
\newcommand{\colu}{{\color{green} u}}
\newcommand{\lra}{\longrightarrow}
\newcommand*{\longhookrightarrow}{\ensuremath{\lhook\joinrel\relbar\joinrel\rightarrow}}
\newcommand{\geqH}{\overset{H}{\geq}}
\newcommand{\Leo}[1]{\todo[size=\tiny,color=blue!30]{#1 \\ \hfill --- L.}}
\newcommand{\Nico}[1]{\todo[size=\tiny,color=red!30]{#1 \\ \hfill --- N.}}
\newcommand{\Address}{
 \bigskip{\footnotesize

 \textsc{Universidad de Chile, Santiago, Chile}\par\nopagebreak
 \textit{E-mail address}: \texttt{nlibedinsky@gmail.com}
}

 \bigskip{\footnotesize
 \textsc{Albert-Ludwigs-Universit\"at Freiburg, Freiburg im Breisgau, Germany}\par\nopagebreak
\textit{E-mail address}: \texttt{leonardo.patimo@math.uni-freiburg.de}
}
}
\DeclareMathOperator{\cha}{ch}
\DeclareMathOperator{\chara}{char}
\DeclareMathOperator{\coeff}{coeff}
\DeclareMathOperator{\defect}{def}
\DeclareMathOperator{\Downs}{Downs}
\DeclareMathOperator{\End}{End}
\DeclareMathOperator{\Ext}{Ext}
\DeclareMathOperator{\for}{For}
\DeclareMathOperator{\gr}{gr}
\DeclareMathOperator{\Grad}{Gr}
\DeclareMathOperator{\grdim}{grdim}
\DeclareMathOperator{\grrk}{grrk}
\DeclareMathOperator{\hgt}{ht}
\DeclareMathOperator{\Hom}{Hom}
\DeclareMathOperator{\Id}{Id}
\DeclareMathOperator{\Iden}{Id}
\DeclareMathOperator{\ima}{Im}
\DeclareMathOperator{\Ker}{Ker}
\DeclareMathOperator{\rad}{rad}
\DeclareMathOperator{\rank}{rk}
\DeclareMathOperator{\Rep}{Rep}
\DeclareMathOperator{\res}{res}
\DeclareMathOperator{\sgn}{sgn}
\DeclareMathOperator{\spa}{span}
\DeclareMathOperator{\supp}{supp}
\DeclareMathOperator{\Sym}{Sym}
\DeclareMathOperator{\Trace}{Tr}
\DeclareMathOperator{\Peaks}{Peaks}
\DeclareMathOperator{\adm}{Adm}
\DeclareMathOperator{\Conf}{Conf}
\DeclareMathOperator{\rex}{rex}
\DeclareMathOperator{\cone}{cone}
\newcommand{\vertcong}{\mathbin{\rotatebox[origin=c]{-90}{$\cong$}}}
\definecolor{Mred}{RGB}{236,56,36}
\definecolor{Mgreen}{RGB}{72,220,38}
\definecolor{Mblue}{RGB}{45,30,200}
\definecolor{Darkgrey}{RGB}{74,74,74}
\definecolor{Mediumgrey}{RGB}{128,128,128}
\definecolor{Lightgrey}{RGB}{155,155,155}
\definecolor{Mbrown}{RGB}{139,87,42}
\definecolor{Mpurple}{RGB}{189,15,224}
\DeclareMathOperator{\Gr}{Gr}
\title{On the affine Hecke category for $SL_3$}
\author{Nicolas Libedinsky and Leonardo Patimo}
\begin{document}

\maketitle

\begin{abstract} 
We study the diagrammatic Hecke category associated with the affine Weyl group of type $\tilde{A}_2 $. More precisely we find a (surprisingly simple) basis for the Hom spaces between indecomposable objects,  that we call \emph{indecomposable double leaves.}

\end{abstract}

\section*{Introduction}

By the affine Hecke category we mean the diagrammatic Hecke category associated with an affine Weyl group. These categories are of crucial importance in representation theory and in the theory of Kazhdan-Lusztig combinatorics.  The main question regarding these categories is to understand the indecomposable objects, and the finest way to achieve this is to study the morphisms between them.  We therefore consider the following problem.

\begin{enumerate}[Problem A:\quad]
	\item[\textbf{Problem A: }] Understand the Hom spaces between indecomposable objects in the Hecke category. 
\end{enumerate}

Hom spaces contain a lot of critical categorical information.
For example, if one knows Hom spaces, one can compute intersection forms and understand the splitting behavior in the affine Hecke category in positive characteristic.

The solution to \textbf{Problem A} for $SL_2$ in characteristic zero is implicit in \cite{E3} (for the experts, the indecomposable light leaves are all the ``up'' morphisms). 

In this paper we study and solve \textbf{Problem A} for the group $SL_3$ in characteristic zero. 
We define the \emph{indecomposable double leaves},  a subset of the usual set of double leaves.  The main theorem of this paper  is \Cref{Thm3}: it states that  if $B_x$ and $B_y$ are indecomposable objects, then  the indecomposable double leaves are a basis for $\mathrm{Hom}(B_x,B_y)$.

We find noteworthy that the set of indecomposable light leaves for $SL_3$ has such a nice and concise form (see \eqref{ILLwall}-\eqref{ILLbeyond}). This is rather surprising since there are many non-canonical choices involved in the construction of the light leaves (unlike the case of $SL_2$).

A fundamental combinatorial problem about Kazhdan-Lusztig polynomials is to find a set that expresses the coefficients of Kazhdan-Lusztig polynomials. For affine Weyl groups in type $A$, this problem had only been solved \cite{LS2} (with completely different methods) for those elements which are maximal with respect to the finite Weyl group, where the combinatorics of Young tableaux is at disposal.
We remark that a consequence of our result is that the set of indecomposable light leaves provide a solution to this counting problem in type $\tilde{A}_2$ for the whole group.

An analogous result in the Grassmannian case (i.e., for maximally singular elements in type $A_n$) was obtained by the second author in \cite{Pat3}.
Moreover, in \cite{LW2}, the first author and Williamson describe a general framework in which similar results can be pursued for an arbitrary Coxeter group.
We believe it is worth trying to extend the methods of this paper to other types.

It is implicit that to solve to \textbf{Problem A} and construct the set of indecomposable light leaves, one first needs to understand well and be able to compute the indecomposable objects. For this reason, before we can approach \textbf{Problem A} we address the following problem.
\begin{enumerate}[Problem B:\quad ]
	\item[\textbf{Problem B: }]\label{QuestB} Find an explicit construction of the indecomposable objects. 
\end{enumerate}
For $SL_2$, the solution to  \textbf{Problem B}
 in characteristic zero was given by Elias in \cite{E3} and in positive characteristic by Burrull, the first author and Sentinelli in \cite{BLS}. 

To answer \textbf{Problem B} we see each indecomposable object as a  subobject of some Bott-Samelson.  We find explicit projectors (\Cref{Thm2}) from a Bott-Samelson to the top indecomposable summand.  This result is interesting in itself,  because it could pave the road (and at the very least it is a necessary first step) in order to find  the analogous projectors for $SL_3$ in positive characteristic.  This would have immense applications in modular representation theory since indecomposable objects in (a quotient of) the Hecke category correspond to indecomposable tilting modules (cf. \cite{RW}). In particular, computing projectors in positive characteristic would help us to understand tilting modules (and to possibly prove the ``billiards conjecture''
by Lusztig and Williamson \cite{LuWi} about characters of tilting modules for $SL_3$).

In our proof of the main result (\Cref{Thm3}), a main necessary ingredient is the knowledge of the graded rank of $\mathrm{Hom}(B_x,B_y)$. For this reason, we first had to obtain a closed formula for Kazhdan-Lusztig polynomials in the $SL_3$ case (\Cref{Thm1}).  This theorem is also interesting in itself for several reasons.  Before the present result, the only infinite groups where all Kazhdan-Lusztig polynomials could be explicitly computed (a result due to Dyer \cite{Dy2}) were the Universal Coxeter groups, i.e., those for which the Coxeter matrix has only $\infty$ in its off-diagonal entries.  Universal Coxeter groups are much simpler that the ones considered in this paper (as an example, each element has a unique reduced expression while in this work different reduced expressions play an important role). 

\Cref{Thm1} was a starting point for  the authors and Plaza to define the ``pre-canonical bases'' \cite{LPP}.  On the other hand Theorem 1  inspired the same calculation  for $\tilde{B}_2$ by Batistelli, Birgham and Plaza \cite{BBP}.  Finally,  in a work in progress by the authors and Plaza, this result is generalized  to any $\tilde{A}_n$ if the elements belong to the lowest double cell (in some precise sense, this is most of the affine Weyl group).

\subsection*{Kazhdan-Lusztig polynomials} 

Let $s_1,s_2$ and $s_3$ be the simple reflections in the affine Weyl group $W$ corresponding to $\tilde{A}_2$. For simplicity of notation we will often denote them simply by $1$, $2$ and $3$. We will use ``label mod $3$'' so $145$ will mean $s_1s_1s_2.$ 

Consider the element $x_n:=12345\cdots n$. For $m,n\in \mathbb{N}$, define $r:=s_0$, $s=s_{2m-2n}$, \begin{equation}\label{rextheta}
\theta(m,n):=1234\cdots (2m+1)(2m+2)(2m+1)\cdots(2m-2n+1)
\end{equation}
 and the set \[\tilde{\theta}(m,n)=\{ \theta(m,n), r\theta(m,n), \theta(m,n)s, r\theta(m,n)s\}.\]

Notice that $x_n$ has a unique reduced expression. We denote by $\undtheta(m,n)$ the reduced expression for $\theta(m,n)$ as in \eqref{rextheta}.

The symmetric group $\calS_3$ acts as group automorphisms on $W$ by permuting the indexes. 
For $x,y\in W$ we say $x\sim y$ if there exists $\sigma\in \calS_3$ such that $\sigma(x)=y$.

For every element of $x\in W$ there exists $y\in W$ with $x\sim y$ such that $y$ belongs to the disjoint union \[\{x_n\}_{n\in \mathbb{N}} \dot\cup\dot{\bigcup_{n,m\in \mathbb{N}}} \tilde{\theta}(m,n).\] We say that elements of the form $x_n$ are \emph{on the wall} and that elements in some $\tilde{\theta}(m,n)$ are \emph{beyond the walls}. 

Let $H$ be the Hecke algebra of $W$ with standard basis $\{\bfH_x\}$ and Kazhdan-Lusztig basis (or canonical basis) $\{\undH_x\}$ (cf. \cite{S4}). For $x\in W$, define the element $\bfN_x:=\sum_{y\leq x}v^{\ell(x)-\ell(y)}\bfH_y \in H.$

\begin{introthm}\label{Thm1}
\begin{enumerate}[i)]
\item The canonical basis on the wall is given by
\[ \undH_{x_n}=\bfN_{x_n}\;\text{ for }n\leq 3,\quad \undH_{x_4}=\bfN_{x_4}+v\bfH_{x_1}\]
and for  $n\geq 5$ by	\[\undH_{x_n}=\begin{cases}
	 \bfN_{x_n}+v\bfN_{x_{n-3}} & \text{ if $n$ odd}\\
	 \bfN_{x_n}+v\bfN_{x_{n-3}} + v\bfH_{z_n} +v^2 \bfH_{z'_n} & \text{ if $n$ even}
	 \end{cases}\] where $z_n=13456\ldots (n-2)$ and $z'_n= z_n s_n$.
\item 	 The canonical basis beyond the wall is given by: \[\undH_{\theta(m,n)} =\sum_{i=0}^{\min(m,n)}v^{2i}\bfN_{\theta(m-i,n-i)}\]
 \[\undH_{r\theta(m,n)}=\undH_{r}\undH_{\theta(m,n)}, \ \ \ \ \undH_{\theta(m,n)s}=\undH_{\theta(m,n)}\undH_s,\]
 \[\undH_{r\theta(m,n)s}=\undH_{r}\undH_{\theta(m,n)}\undH_s.\]
	 \end{enumerate}
\end{introthm}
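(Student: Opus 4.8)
The plan is to prove both parts by induction on the Coxeter length $\ell$, using the characterization of the canonical basis --- $\undH_x$ is the unique bar-invariant element of $\bfH_x+\sum_{y<x}v\bbZ[v]\,\bfH_y$ --- together with the standard recursion
\[
\undH_s\,\undH_w\;=\;\undH_{sw}\;+\!\!\sum_{\substack{z<w\\ sz<z}}\!\!\mu(z,w)\,\undH_z\qquad(sw>w),
\]
where $\mu$ is the Kazhdan--Lusztig $\mu$-function, and its mirror image for the right action. Since $\calS_3$ acts on $(W,S)$ by Coxeter automorphisms it permutes the canonical basis, so it suffices to compute $\undH_x$ for $x$ in the list $\{x_n\}_n\cup\bigcup_{m,n}\tilde\theta(m,n)$; the cases $n\le 4$ and $m,n$ small are checked directly. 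Self-duality of every claimed formula is automatic, since each is a $\bbZ[v,v^{-1}]$-linear combination of products of canonical basis elements; all the work lies in identifying the lower-order terms. The elementary tool for this is the effect of right multiplication by $\undH_s=\bfH_s+v\bfH_e$ on an element $\bfN_w$: using $\bfH_y\bfH_s=\bfH_{ys}$ when $ys>y$ and $\bfH_y\bfH_s=\bfH_{ys}+(v^{-1}-v)\bfH_y$ when $ys<y$ (forced by $\bfH_s^2=(v^{-1}-v)\bfH_s+\bfH_e$) and reorganizing, one expresses $\bfN_w\,\undH_s$ as $\bfN_{ws}$ plus a correction that can be written explicitly in terms of the $\bfN_y$ for certain $y\le w$ having $s$ as a right descent.

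For part (i): $x_n$ has the unique reduced word $s_1s_2s_3s_1s_2\cdots$ (indices mod $3$), so $x_n=x_{n-1}s_n$ with $\ell(x_n)=\ell(x_{n-1})+1$, and we apply the right recursion. By the inductive formula $\undH_{x_{n-1}}=\bfN_{x_{n-1}}+v\bfN_{x_{n-4}}+(\text{parity term})$ for $n\ge 6$ (with the obvious variants for $n=5$); since a summand $v^{j}\bfN_w$ puts the monomial $v$ in front of $\bfH_y$ only when $j+\ell(w)-\ell(y)=1$, every nonzero $\mu(z,x_{n-1})$ is supported on the covers of $x_{n-1}$, on $x_{n-4}$, and --- when $n-1$ is even --- on $z_{n-1}$. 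For each of these one decides by inspection whether $s_n$ is a right descent (the relevant reduced expressions, and hence their last letters, are explicit), subtracts the corresponding inductively known $\undH_z$ from $\undH_{x_{n-1}}\undH_{s_n}$, and simplifies using the multiplication rule above: the term $v\bfN_{x_{n-4}}$ becomes $v\bfN_{x_{n-3}}$, and the extra terms $v\bfH_{z_n}+v^2\bfH_{z_n'}$ appear exactly for $n$ even, as residues of the non-reduced subexpressions near the middle of the Coxeter word. This reproduces the stated closed forms.

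For part (ii): the identity $\undH_{\theta(m,n)}=\sum_{i=0}^{\min(m,n)}v^{2i}\bfN_{\theta(m-i,n-i)}$ is proved by induction on $m+n$, passing from a suitable shorter member of the list (for $n\ge 1$, the element $\theta(m,n-1)s$ obtained by deleting the last letter of the reduced word \eqref{rextheta}; for $n=0$, the element $x_{2m+2}$) to $\theta(m,n)$ by right multiplication by one simple reflection, the nested sum appearing as a telescoping of the successive $\mu$-corrections. For the last three families one first reads off from \eqref{rextheta} that $r=s_0$ is not a left descent and $s=s_{2m-2n}$ is not a right descent of $\theta(m,n)$, so that $r\theta(m,n)>\theta(m,n)<\theta(m,n)s$; it then remains to prove $\undH_r\undH_{\theta(m,n)}=\undH_{r\theta(m,n)}$ and $\undH_{\theta(m,n)}\undH_s=\undH_{\theta(m,n)s}$, i.e.\ that $\mu(z,\theta(m,n))=0$ whenever $z\lessdot\theta(m,n)$ and $rz<z$ (resp.\ $zs<z$). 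From the formula just proved, $\mu(z,\theta(m,n))\neq 0$ already forces $z\lessdot\theta(m,n)$, and a direct inspection of \eqref{rextheta} shows that no such cover acquires $r$ as a left, or $s$ as a right, descent; applying this twice gives $\undH_{r\theta(m,n)s}=\undH_r\undH_{\theta(m,n)}\undH_s$ with no correction.

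The step I expect to be the main obstacle is the lower-term bookkeeping in part (i): one must show that after forming $\undH_{x_{n-1}}\undH_{s_n}$ and carrying out the subtractions, exactly the terms $v\bfN_{x_{n-3}}$ and --- for $n$ even --- $v\bfH_{z_n}+v^2\bfH_{z_n'}$ survive, with nothing further appearing and nothing overcounted. This is delicate because it depends on the fine structure of the Bruhat order on the $\tilde A_2$ Weyl group (precisely which elements lie below $x_{n-1}$, and which of them have $s_n$ as a right descent) and on the corrections generated when right descents intervene in $\bfN_w\,\undH_s$; in particular the parity dichotomy, invisible in the bare formula, must be shown to be forced by the combinatorics. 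The analogous obstacle in part (ii) is milder, and reduces to the descent analysis of $\theta(m,n)$ and its covers.
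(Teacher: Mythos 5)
Your scheme --- induction via the KL recursion, identifying the nonzero $\mu$-coefficients from the inductive formula, and subtracting the lower canonical basis elements --- is the standard template and is also the skeleton of the paper's proof (Propositions~\ref{Heckeonthewall} and~\ref{out} run exactly such a loop with auxiliary statements $(B_n)$ resp.\ $(B_{m,n}),(C_{m,n})$). Where you and the paper part ways is precisely the ``lower-term bookkeeping'' step you flag at the end: the paper does \emph{not} expand $\bfN_w\undH_s$ in the standard basis and track which terms survive. Instead it uses two devices that make that bookkeeping unnecessary. First, the \emph{content map} $c\bigl(\sum p_w\bfH_w\bigr)=\sum p_w(1)$, a ring homomorphism with $c(X\undH_s)=2c(X)$; from $(B_n)$ one reads off $c(\undH_{x_{n+1}})$ numerically, using the identifications $y_{2m}=\theta(m-2,0)$, $z_{2m}^{-1}\sim\theta(m-3,0)$ and the closed formulas for $|{\leq}\theta(m,n)|$ and $|{\leq} x_n|$ from \Cref{counting}. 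Second, \emph{monotonicity} of KL polynomials (\Cref{monothm}), via \eqref{Ngeq}, gives the one-sided bound $\undH_{x_{n+1}}\geqH\bfN_{x_{n+1}}+v\bfN_{x_{n-2}}$ from a single small coefficient computation. Since $\geqH$ plus equal content forces equality, the explicit expansion is never performed. The same content-plus-monotonicity mechanism drives the beyond-wall statement $(A_{m,n})$. The geometric counting lemma --- obtained by describing the Bruhat interval $\leq\theta(m,n)$ as a union of hexagons in the $\tilde A_2$ Coxeter complex --- is an essential input that has no analogue in your outline.

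Two substantive caveats on what you would have to resolve to carry your version through. First, ``the term $v\bfN_{x_{n-4}}$ becomes $v\bfN_{x_{n-3}}$'' is not literally the effect of $\undH_{s_n}$: $\bfN_w\undH_s=\bfN_{ws}$ fails as soon as some $y\leq w$ has both $ys<y$ and $ys\leq w$ (which happens here), and the correction is supported on the common $s$-descent part of $\leq w$ with a $v^{-2}$ shift --- so you must explain why it cancels against the corrections generated by $v\bfN_{x_{n-1}}\undH_{s_n}$ and by subtracting $\undH_{y_{n-1}},\undH_{z_{n-1}}$. Second, $y_{n-1}$ and $z_{n-1}$ are (conjugate to) elements $\theta(m',0)$ beyond the wall, so your subtraction step needs the $\bfN$-expansion of those $\undH$'s, which is part (ii); a correct implementation therefore requires the same coupled simultaneous length induction between parts (i) and (ii) that the paper runs, and your outline treats the two parts as more independent than they actually are.
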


This result (without a proof) appears in an unpublished paper by Geordie Williamson, that he kindly shared with us. Progress in this direction had also been done by Wang \cite{Wang}.

This theorem gives an explicit description of the Kazhdan-Lusztig basis, but not explicit enough for our purposes. We also need to understand the element $\bfN_x$ for any $x$, or, equivalently, to understand the set $\leq x:=\{y\leq x\ \vert \ y \in W\}$. We describe now such a set for the elements $\theta(m,n)$. 

For clarity of exposition we start by explaining the figure below. The big black dot represents the identity in $W$. The colors on the edges represent the following simple reflections: red is $s_1$, green is $s_2$ and blue is $s_3$. Concerning the little yellow dots, the lower one is $\theta(2,0)$ the middle one is $\theta(3,1)$ and the top one is $\theta(4,2)$.

In the figure there are three shades of gray, let us call them \emph{light grey}, \emph{middle gray}, and \emph{dark gray}. The light grey is the set $\leq \theta(2,0)$. This region is an ``equilateral triangle'' defined by the following three properties. Its center $O$ is the upper vertex of the identity triangle, its three medians are the three reflecting hyperplanes passing through $O$ and it is the minimal triangle satisfying the first two conditions and containing $ \theta(2,0)$.

The middle gray joined with the light grey region is the set $\leq \theta(3,1)$. The construction of this is obtained by adding all the ``red hexagons'' surrounding $\leq \theta(2,0)$. Finally the union of the light, middle and dark gray regions is the set $\leq \theta(4,2)$, which is obtained as before by adding all the red hexagons surrounding $\leq \theta(3,1)$.

\begin{center}

\tikzset{every picture/.style={line width=0.7pt}}


 \end{center}

What we have just described for this particular case is a general construction for \newline
$\leq\theta(m,n)$: the set $\leq\theta(i,0)$ (or $\leq\theta(0,i)$) is the equilateral triangle satisfying the corresponding three properties mentioned before and $\leq\theta(i+1,j+1)$ is obtained from $\leq\theta(i,j)$ by adding the corresponding surrounding hexagons. The description of $\bfN_{x_n}$ is also easy to describe geometrically as it is shown in \Cref{counting}.

\subsection*{Projectors in Bott-Samelson objects}
Consider the diagrammatic Hecke category $\mathcal{H}$ defined by Elias and Williamson \cite{EW2}. Our second theorem gives an explicit inductive procedure to find an idempotent $e_{\underline{w}}$ in the endomorphism ring of some Bott-Samelson object $\mathrm{BS}(\underline{w})$ projecting to the corresponding indecomposable object $B_w$. 

We introduce some notation. A rectangle with $w$ inside represents the projector $e_{\underline{w}}$. Furthermore, we replace $x_n$ simply by $n$ in that notation (so a rectangle with an $n$ inside represents the projector $e_{x_n}$) and a rectangle with $m,n$ inside represents a projector $e_{\theta(m,n)}$ composed with some braid move. One can check that the projector formula does not depend on the chosen braid move.

Finally, for simplicity of some diagrams, we will use the following notation:

\vspace{-2mm}
\tikzset{every picture/.style={line width=2pt}}
\begin{center}


\vspace{-3mm}
\end{center}
where $y_n$ is the reduced expression $123\cdots (n-4)(n-2)(n-3)(n-2)$\footnote{For $n\leq 3$ the elements $y_n$ and $z_n$ are not well-defined. However, for those $n$ the corresponding coefficients $c_{n+1}$ and $d_{n+1}$ in the projectors vanish.} and 
\begin{itemize}
\item $c_1=c_2=0$ and for $n\geq 1$, $c_{2n+1}=c_{2n+2}=-\frac{n-1}{n}$
\item $d_1=d_3=d_5=0$ and for $n\geq 2$, $d_{2n+1}=\frac{n-2}{n-1}$.
\end{itemize}

\item The projectors beyond the wall are described by the following formula:
\vspace{-3mm}
\end{itemize}
\begin{equation}\label{projoow}

\end{equation}
where $c_{m+1}=\frac{m}{m+1}$ and $d_{m+1,n}=-\ \frac{n( m+n+1)}{( n+1)( m+n+2)}$ for every $m,n\geq 0$.

\end{introthm}

As we mentioned before, the analogue result for $\tilde{A}_1$ in characteristic zero was given by Elias \cite{E3} (it is much simpler that the present result). For Universal Coxeter groups in characteristic zero it was obtained by the Elias and the first author \cite{EL2} and for $\tilde{A}_1$ in positive characteristic by Burrull, the first author and Sentinelli \cite{BLS}. The ``beyond the wall'' part of our result is strongly inspired by a combination of Elias's triple clasp expansion \cite{E4} in the context of quantum groups, Elias's quantum Satake equivalence \cite{E3} and Williamson's singular Soergel bimodules \cite{W4}. Although with these tools one gets projectors using singular diagrams, not regular diagrams as the ones given here. As we said before, we hope that this formula opens the way for a formula in $\tilde{A}_n$ by understanding better which kind of light leaves appear in the projectors.
 
\subsection*{Categorifying the KL polynomials}

Recall from \cite{Li1,Li3} the definition of the ``Light leaves basis'' and from \cite{EW2} its diagrammatic formulation as a sequence of elements in the set $\{U0,U1, D0, D1\}$. We call a light leaf that only contains $U0$'s and $U1$'s a \emph{$U$-light leaf}. A box with a letter $U$ inside will mean a $U$-leaf.


For $\mathrm{BS}(\underline{x_n})$ and $\mathrm{BS}(\undtheta(m,n))$ we define a set of \emph{indecomposable light leaves} $\mathrm{ILL}(x_n)$ and $\mathrm{ILL}(\theta(m,n))$ as follows. 

The set $\mathrm{ILL}(x_n)$ is composed by the following set of maps precomposed with $e_{x_n}$ :

\begin{center}
\begin{equation}\label{ILLwall}

\end{equation}

\end{center} 
where $1\leq i\leq \mathrm{min}\{m,n\}.$ In this last picture the colors depend of the residue of $2m$ modulo $3$. In this picture we are supposing that $s_{2m+2}=b$ (or, equivalently that $2m+2$ is a multiple of $3$). 

It is now clear how to produce $\mathrm{ILL}(x)$ for any $x\in W$ (it will be a set of maps with source $\mathrm{BS}(\underline{x})$). If the element $x$ can be obtained from some $x_n$ or from some $\theta(m,n)$ by permuting the colors, we just need to permute correspondingly the colors in the pictures above. If the element $x$ is $\theta(m,n)$ multiplied by $s$ on the left and/or by $t$ on the right, as in \Cref{Thm1}, then a simple procedure (explained in \Cref{inducinglightleaves})
produces the indecomposable light leaves $\mathrm{ILL}(x)$ starting from $\mathrm{ILL}(\theta(m,n))$.

We denote by $\bar{\ILL}(y)$ the set obtained by flipping upside-down the morphisms in $\ILL(y)$.
In the following theorem we see the indecomposable objects as subobjects of the corresponding Bott-Samelson objects via the projectors defined above.
\begin{introthm}[Indecomposable Double leaves theorem]\label{Thm3}
For $x,y\in W$ the set $ \bar{\ILL}(y)\circ \ILL(x)$ forms a free $R$-basis of $\Hom(B_x,B_y)$ in $\mathcal{H}$.
\end{introthm}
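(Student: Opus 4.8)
# Proof proposal

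The plan is to reduce \Cref{Thm3} to two quantitative facts: first, that $\bar{\ILL}(y)\circ\ILL(x)$ has the right size, namely that for each $x,y$ the graded cardinality of this set equals the graded rank of $\Hom(B_x,B_y)$; and second, that the set is linearly independent over $R$ (equivalently, spanning, once the ranks match). The graded rank of $\Hom(B_x,B_y)$ is computed by the usual formula in the Hecke category: $\grrk\Hom(B_x,B_y)=\sum_{z}\coeff_{\undH_z}(\undH_x\cdot\overline{\undH_y})\cdot(\text{something})$, or more directly via the pairing $(\undH_x,\undH_y)$ in the standard form on $H$, which by the Soergel–Elias–Williamson machinery equals $\grrk\Hom(B_x,B_y)$. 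Since \Cref{Thm1} gives closed formulas for all $\undH_x$, and the discussion after it describes $\bfN_x$ (i.e.\ the down-sets $\leq x$) explicitly, this rank is completely computable; the first half of the proof is therefore the bookkeeping identity
\[
\sum_{f\in\ILL(x)}v^{\deg f}\cdot\!\!\sum_{g\in\bar\ILL(y)}v^{\deg g}\;=\;\grrk\Hom(B_x,B_y),
\]
which one checks by matching the ILL-diagrams, organized by their target one-colored/Bott–Samelson reduced expression, against the explicit monomials appearing in $\undH_x$. I would do this first for $x,y\in\{x_n\}\cup\tilde\theta(m,n)$ and then invoke the $\calS_3$-symmetry and the inductive procedure of \Cref{inducinglightleaves} to cover all $x\in W$.

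The second and main half is linear independence. Here I would follow the standard double-leaves strategy (as in \cite{Li1,Li3,EW2}): endow $\Hom(B_x,B_y)$ with the filtration by "support," i.e.\ by the Bruhat order on the elements $w\le x$ (or $w\le y$) through which a morphism factors, using the standard/costandard bimodules $\Delta_w,\nabla_w$ and the fact that $\Hom(B_x,\nabla_w)$ and $\Hom(\Delta_w,B_y)$ are free $R$-modules of known rank. The point is that the double leaf $\bar g\circ f$ with $f\in\ILL(x)$ factoring "through $w$" and $g\in\ILL(y)$ factoring "through $w$" has leading term (in the associated graded) a nonzero multiple of the rank-one "projector through $w$," and double leaves with distinct $w$ live in distinct graded pieces. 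So it suffices to show that for each fixed $w$, the finitely many double leaves supported exactly at $w$ map to a basis of the corresponding graded piece $\Hom(B_x,\nabla_w)\otimes_R\Hom(\Delta_w,B_y)$ (up to the appropriate Hom between $B_x$ and $B_y$ factoring through the $\nabla_w$/$\Delta_w$ "string"). This reduces to: the $U$-light leaves $\ILL(x)$ composed with $e_x$, projected to $\Hom(B_x,\nabla_w)$, are linearly independent — and this is where one genuinely uses that the maps in \eqref{ILLwall}–\eqref{ILLbeyond} were chosen correctly, together with the explicit projectors $e_{x_n},e_{\theta(m,n)}$ from \Cref{Thm2}. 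Concretely I would evaluate these compositions in the standard bimodule $R^{x}$ (or apply the "light leaf evaluation" / localization at the point $w$) and check the resulting matrix of polynomials is invertible; the explicit coefficients $c_n,d_n,c_{m+1},d_{m+1,n}$ in \Cref{Thm2} should make these evaluations tractable.

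The hardest step, I expect, is exactly this last linear-independence computation in the "beyond the wall" case: there one must verify that the family \eqref{ILLbeyond} (indexed by $i$, the number of arcs) remains independent after composing with the beyond-the-wall projector of \eqref{projoow}, and the arcs interact nontrivially with the $d_{m+1,n}$ terms. The natural tool is a downward induction on $\min(m,n)$ mirroring the recursion $\undH_{\theta(m,n)}=\sum_i v^{2i}\bfN_{\theta(m-i,n-i)}$: each added arc should contribute exactly one new basis vector, of degree $2$ higher, landing in the $\nabla_w$-piece for $w=\theta(m-i,n-i)$ and nowhere below. Making this precise requires knowing that the arc diagram, post-composed with $e_{\theta(m,n)}$, does not degenerate — i.e.\ its evaluation is a unit times the lower projector — which is really a compatibility statement between \eqref{ILLbeyond} and \eqref{projoow} that I would prove by the same localization/evaluation argument, using that $e_{\theta(m,n)}$ is idempotent and kills everything not supported on $\le\theta(m,n)$. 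Once both halves are in place, rank-matching plus independence forces the set to be a basis, and the $\calS_3$-symmetry and the inducing procedure of \Cref{inducinglightleaves} extend the conclusion from the distinguished representatives to all of $W$.
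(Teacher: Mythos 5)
Your high-level strategy --- match graded ranks, then prove $R$-linear independence by descending to the quotients $\Hom_{\not<w}$ --- is indeed the paper's strategy: the proof reduces \Cref{Thm3} to \Cref{restatement} (for each $y\leq x$, $\ILL(x,y)$ is a basis of $\Hom_{\not<y}(B_x,B_y)$) and matches ranks via Soergel's Hom formula and \Cref{Thm1}. You also correctly flag the hardest step, namely showing the arc diagrams of \eqref{ILLbeyond}, post-composed with $e_{\undtheta(m,n)}$, do not degenerate.

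However, the route you propose for independence --- localize at $w$ and verify an explicit polynomial matrix is invertible --- is not the paper's, and it is unlikely to be tractable given how involved the projectors of \Cref{Thm2} are. The paper avoids any such evaluation by exploiting two structural devices that your outline does not name. On the wall, it classifies the possible endings of a $01$-sequence (\Cref{tree}), proves the vanishing $\phi_n\circ e_{\undx_n}=0$ (\Cref{1101d}), and uses it to slide caps across strands (\Cref{movingcaps}); the induction from $x_n$ to $x_{n+1}$ then shows every induced light leaf, after precomposing with $e_{\undx_{n+1}}$, falls back into the $R$-span of $\ILL(x_{n+1})$. Beyond the wall, instead of checking independence for all $y$ directly, it invokes Plaza's categorical monotonicity (\Cref{mono}): post-composition with a degree-one light leaf $G^z_y$ is injective modulo lower terms, so it suffices to prove that the single morphism $\bar{\Psi}^i_{m,n}$ is nonzero modulo lower terms (\Cref{lower}). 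That in turn reduces to a dimension count: $\dim_\bbQ\Hom^{2i}(B_{\theta(m,n)},B_{\theta(m-i,n-i)})=1$ by \Cref{boundsonmorphism}, and one checks that among the few candidate light leaves spanning this one-dimensional space, all but $\bar{\Psi}^i_{m,n}$ either vanish modulo lower terms (forced by degree constraints, \Cref{fac1}) or are scalar multiples of it. These reductions are what make the argument finite; without them your proposal has the right shape but no realistic path to completion.
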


\subsection*{Organization of the paper} \Cref{S2} is devoted to prove \Cref{Thm1}. It can be read without knowing the Hecke category. \Cref{S3} and \Cref{S4} are resp. devoted to prove \Cref{Thm2} and \Cref{Thm3}. These two sections use the results in \Cref{S2} and are largely independent from each other except that at one point: in the proof of \Cref{deg0onthewall} we use the knowledge of $\ILL(x_n)$ from \Cref{S4}.

\section{The Hecke algebra (\texorpdfstring{\Cref{Thm1}}{Theorem 1})}\label{S2}
Let $(W,S)$ denote the Coxeter system of type $\tilde{A}_2$: it has three simple reflections $s_1,s_2,s_3$ and $m_{s_1,s_2}=m_{s_2,s_3}=m_{s_3,s_1}=3$. Let $\leq$ denote the Bruhat order on $W$, $\ell$ the length function and let us say that $y\lessdot x$ if and only if $y\leq x$ and $\ell(y)=\ell(x)-1$.

\subsection{Some results about \texorpdfstring{$W$}{W}}\label{Sgroup}

If $\undw=r_1r_2\cdots r_n$ is an expression (a sequence of simple reflections), we say that there is a \emph{braid triplet in position $i$} if $r_{i-1}=r_{i+1}$ and $r_{i-1}\neq r_i$ (here $1<i<n$). We define 
 the \emph{distance} between a braid triplet in position $i$ and a braid triplet in position $j>i$ to be the number $ i-j-1$. 

\begin{lemma}\label{redex}
	An expression $\undw$ without adjacent simple reflections is reduced if and only if the distance between any two braid triplets is odd. 
\end{lemma}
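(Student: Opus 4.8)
My plan is to use the geometric model. The Coxeter complex of $(W,S)$ is the tessellation $\calT$ of the Euclidean plane by equilateral triangles, and $\undw=r_1\cdots r_n$ is reduced if and only if the reflections $t_k:=r_1\cdots r_{k-1}\,r_k\,r_{k-1}\cdots r_1$ $(1\le k\le n)$ are pairwise distinct — equivalently the gallery $C_0,\dots,C_n$ determined by $\undw$ (with $C_0$ the fundamental alcove and $C_k$ the neighbour of $C_{k-1}$ across its panel of colour $r_k$, for the unique $W$-equivariant $S$-colouring of the vertices and panels of $\calT$) crosses each wall at most once. The key preliminary is a local dictionary. For $1\le k\le n-1$ the two panels of $C_k$ used by the gallery meet in a single vertex $V_k$, of colour $\tau_k$ with $\{\tau_k\}=S\setminus\{r_k,r_{k+1}\}$; a one‑line check with the colouring shows that position $i$ $(1<i<n)$ is a braid triplet if and only if $V_{i-1}=V_i$, and that otherwise $V_{i-1}\ne V_i$ are the two endpoints of the panel crossed at step $i$. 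Consequently, along any block of consecutive non‑triplet positions the word is forced to be $3$‑periodic (each new letter is the unique one making three consecutive letters pairwise distinct) and the gallery runs ``straight'', whereas a braid triplet makes it pause and bend by $120^\circ$ around the single vertex $V_{i-1}=V_i$.

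For the implication ``$\undw$ reduced $\Rightarrow$ all braid triplets at odd distance'' I argue by contraposition. If some two braid triplets lie at even distance, then among the braid‑triplet positions $i_1<\dots<i_k$ some consecutive pair $i_l<i_{l+1}$ has $i_{l+1}-i_l$ odd, i.e.\ is at even distance $d:=i_{l+1}-i_l-1$; the positions strictly between are non‑triplets, so the dictionary forces the corresponding subword of $\undw$ to have the shape
\[
a\,b\,\underbrace{a\,c\,b\,a\,c\,b\,\cdots}_{d+1\ \text{letters}}\,x ,\qquad x=r_{i_{l+1}-1},\quad \{a,b,c\}=S .
\]
Starting from the two braid triplets at the two ends and applying braid moves $yzy\leftrightarrow zyz$ that propagate one position inward at each step, the two moving fronts meet, after about $d/2$ moves on each side, in a pattern $s_is_i$ when $d$ is even (they arrive out of phase), whereas for odd $d$ they would only produce one more braid triplet. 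Hence this subword, and therefore $\undw$, is not reduced.

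For the converse, ``all braid triplets at odd distance $\Rightarrow$ reduced'', I track the sequence $L_1,\dots,L_n$ of walls crossed by the gallery (equivalently the $t_k$). Inside a maximal straight block the $L_k$ form a run of parallel walls and are plainly distinct; at a braid triplet the straight block ends and the gallery re‑enters straight motion after a $120^\circ$ bend about one vertex, and the content of the statement is that two such bends at distance $d$ force two of the $L_k$ to coincide — geometrically, the gallery wraps around the six alcoves at a vertex and re‑crosses one of that vertex's three walls — exactly when $d$ is even; when all the distances are odd one shows, by induction on the number of braid triplets, that the set of walls crossed after any given braid triplet is disjoint from all previously crossed ones, so no wall repeats and $\undw$ is reduced. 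This wall‑bookkeeping — and especially proving the positive ``no re‑crossing'' statement rather than the easy ``re‑crossing'' one — is the step I expect to be the real obstacle: the necessity direction only requires exhibiting one shortening, whereas here all possible foldings must be excluded at once. (As a sanity check, for $\le 1$ braid triplet the statement correctly gives ``always reduced'', and the minimal obstructions at distance $0$ and $2$ are the non‑reduced words $stst$ and $stsutu$.)
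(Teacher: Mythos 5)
You take essentially the same geometric approach as the paper: both read $\undw$ as a gallery in the $\tilde{A}_2$ Coxeter complex and identify braid triplets with the moments when the gallery bends $120^\circ$ around a vertex, with maximal triplet-free blocks as straight runs. The difference is in how the geodesic criterion is packaged. The paper phrases it via the ``arrows'' (maximal triplet-free subexpressions), asserting that the gallery is minimal if and only if each arrow has the same direction and sense as the one two steps ahead, and then observes that the parity of the distance between two triplets is exactly what controls whether the next arrow has the correct sense. You instead phrase it via wall-crossings (reduced iff no wall is crossed twice), prove the forward direction by an explicit braid-move collision argument inside the offending subword, and for the converse track the set of walls crossed after each bend. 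These are equivalent formalizations; your forward direction is a legitimate algebraic alternative to the paper's purely geometric observation.

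Where the two differ most is honesty about rigor. Your converse (``all distances odd $\Rightarrow$ no wall is re-crossed'') is the step you explicitly flag as ``the real obstacle'' and do not complete; this is precisely the same step the paper dispatches with ``it is clear that a path is a geodesic if and only if each arrow has the same direction and sense as the arrow that is two steps ahead of it.'' Neither you nor the paper fully justifies this, so your proposal is at essentially the same level of informality as the published proof. The concrete thing that would close your gap is exactly what the paper's arrow picture makes geometrically plausible: once two consecutive arrows point in two of the three edge-directions of the tessellation and every bend at odd distance preserves the alternation, all subsequent arrows stay in the same open half-plane determined by the first wall crossed after any given bend, so no earlier wall can be met again. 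If you want to upgrade your argument, pinning this down (for instance by writing each $t_k$ as a reflection in a wall whose signed distance from the fundamental alcove is strictly monotone along the straight runs, and checking that an odd-distance bend preserves the sign) is the missing piece.
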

\begin{proof}

We recall that expressions are in bijection with paths in the Coxeter complex (in the case of $\tilde{A}_2$ the Coxeter complex is the usual tessellation of the plane by equilateral triangles) starting from the identity. Under this bijection, reduced expressions are geodesics.

In Figures $1$ and $2$, the triangle with the black dot denotes the identity and the red arrows are subexpressions that do not contain any braid move. When an expression is given as a concatenation of these arrows (or, equivalently, it is divided into maximal subexpressions without braid triplets) it is clear that a path is a geodesic if and only if each arrow has the same direction and sense as the arrow that is two steps ahead of it. Thus, the first and third arrows have the same direction, and so do the second and fourth, etc.

Let us consider all the red arrows that are not the last and the first one. Then, the number of triangles touched by each one of these arrows minus two (we have to subtract the starting and the ending triangles) is the distance between two braid triplets. It is clear that the sense of the next arrow depends on the parity of this distance, thus the Lemma is proved. 

We give an example. In Figure $1$, the distance between braids are $5, 3$ and $1$, and thus the path is a geodesic. In Figure $2$, the distances are $5, 3$ and $2$, thus the last arrow makes the path a non-geodesic.

\tikzset{every picture/.style={line width=0.7pt}}

\end{proof}

\begin{cor}\label{stu}
	Let $s,t,u$ be the three simple reflections of $W$. If $w\in W$ has a reduced expression $\undw$ ending in $st$, then $wu>w$.
\end{cor}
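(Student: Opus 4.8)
The plan is to deduce the statement directly from \Cref{redex}: I will show that the word $\undw u$ obtained by appending $u$ to $\undw$ is again reduced, which gives $\ell(wu)=\ell(w)+1$ and hence $wu>w$. Write $\undw=r_1r_2\cdots r_n$ with $r_{n-1}=s$ and $r_n=t$ (so $n\ge 2$). Since $\undw$ is reduced it has no two adjacent equal simple reflections, because cancelling such a pair would shorten it; thus \Cref{redex} applies to $\undw$ and tells us that the distance between any two braid triplets of $\undw$ is odd.

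Next I would analyse $\undw u=r_1\cdots r_{n-1}r_n u$. As $u$ is the simple reflection distinct from both $s$ and $t$, we have $r_n=t\neq u$, so $\undw u$ still contains no two adjacent equal simple reflections and \Cref{redex} is applicable to it as well. It then remains to compare the braid triplets of $\undw u$ with those of $\undw$. Every braid triplet of $\undw u$ in positions $2,\dots,n-1$ is a braid triplet of $\undw$ and conversely, since the relevant letters coincide; the only new position available in $\undw u$ is position $n$, and a braid triplet there would require $r_{n-1}=r_{n+1}$, i.e.\ $s=u$, which is false. Hence $\undw u$ and $\undw$ have exactly the same braid triplets in the same positions, so all pairwise distances between braid triplets of $\undw u$ are odd.

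By \Cref{redex}, $\undw u$ is therefore reduced, so $\ell(wu)=\ell(\undw u)=n+1=\ell(w)+1$, which is precisely $wu>w$. The only step that needs care is the observation that appending the \emph{third} reflection $u$ to a word ending in $st$ cannot create a new braid triplet — this is exactly where the hypothesis on the last two letters of $\undw$ is used; everything else is a routine application of \Cref{redex}.
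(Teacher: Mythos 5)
Your proof is correct and follows exactly the same route as the paper: both arguments observe that appending $u$ to a reduced word ending in $st$ creates no new braid triplet, so \Cref{redex} applies unchanged and $\undw u$ remains reduced. You simply spell out the details that the paper leaves implicit.
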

\begin{proof}
	Multiplying $\undw$ by $u$, we do not create any new braid triplets. Hence, $\undw u$ is reduced if and only if $\undw$ is reduced. 
\end{proof}

Recall from the introduction that $x_n=123123\ldots $ with length $n$ and \[\theta(m,n):=1234\cdots (2m+1)(2m+2)(2m+1)\cdots(2m-2n+1).\] 

\begin{cor}\label{lessdot}
	For every $x\in W$ there are at most $6$ elements $y$ with $y\lessdot x$.
\end{cor}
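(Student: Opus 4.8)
The plan is to fix one reduced expression of $x$ and bound the number of single-letter deletions that leave it reduced; by the subword property of the Bruhat order every $y\lessdot x$ is obtained from a reduced expression of $x$ by deleting one letter so as to stay reduced, hence this number bounds $\#\{y : y\lessdot x\}$.

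First I would reuse the combinatorial model from the proof of \Cref{redex}. Write a reduced expression $\undx=r_1r_2\cdots r_n$ (so $r_k\neq r_{k+1}$ for all $k$), orient the three colours cyclically, and record each of the $n-1$ steps $r_k\rightsquigarrow r_{k+1}$ by a sign $\varepsilon_k\in\{+,-\}$ according to whether it follows the cyclic order or reverses it. As in the proof of \Cref{redex}, a braid triplet at position $k$ (i.e.\ $r_{k-1}=r_{k+1}$) is exactly a sign change $\varepsilon_{k-1}\neq\varepsilon_k$; so, grouping $\varepsilon_1,\dots,\varepsilon_{n-1}$ into maximal constant blocks of lengths $a_1,\dots,a_b$, the braid triplets of $\undx$ occur at the word-positions $p_j:=1+(a_1+\cdots+a_j)$, $1\le j\le b-1$, and \Cref{redex} says precisely that the inner blocks $a_2,\dots,a_{b-1}$ are all even.

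Next I would analyse the deletion of $r_k$. If $k$ is a braid-triplet position then $r_{k-1}=r_{k+1}$, the deletion causes cancellation, the word shortens by at least $2$, and is not reduced. Otherwise, deleting $r_k$ merges the two (equal) signs at positions $k-1,k$, which lie in a single block $B_j$, into one opposite sign, and one asks whether the new sign sequence still has all inner blocks even. Tracking this — in particular the merging of the inserted opposite singleton with a neighbouring block when it lands at an end of $B_j$ — yields: a deletion strictly inside a block always creates an inner block of length $1$, hence an odd inner block, so it is never reduced; a deletion at an end of a block is reduced only in a few boundary situations. Concretely, the deleted word can be reduced only for $k\in\{1,2,n-1,n\}$ and, in addition, for at most two further values of $k$ sitting next to the first braid triplet $p_1$ or the last braid triplet $p_{b-1}$ (when $b\le 2$ there is no braid triplet, or only one, which is handled directly; for small $\ell(x)$ some of these positions coincide, only improving the bound). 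In every case at most $6$ values of $k$ survive, so $\#\{y:y\lessdot x\}\le 6$.

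I expect the main obstacle to be the bookkeeping in the ``end of a block'' cases: when the deleted letter sits right beside a turning point, the freshly created opposite sign can merge with the adjacent block, changing a block length by $\pm 1$ and so turning an even inner block into an odd one, or else deleting an inner block altogether. One has to run through the subcases $j=1$, $j=b$, $j=2$, $j=b-1$ and $3\le j\le b-2$, each time deciding whether the small block produced is ``inner'' or lies at the boundary of the new expression, to see that only the first few produce a reduced word. That the bound $6$ is attained can be checked on the element with reduced expression $s_1s_2s_3s_1s_2s_1s_3s_2s_1$ (sign pattern $++++----$), whose six distinct one-letter deletions at positions $1,2,4,6,8,9$ are all reduced.
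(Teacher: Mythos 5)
Your argument is correct, and it genuinely differs from the paper's. The paper first handles $x=x_n$ by a direct inspection (the only one-letter deletions that avoid a $stst$ sub\-word are the first two and last two, giving at most $4$), and then, for $x$ beyond the walls, it invokes the fact that $x$ admits a reduced expression with \emph{exactly one} braid triplet — a consequence of the explicit classification of $W$-elements into $x_n$ and $r\theta(m,n)s$ and its variants. With that structural input the bound $4+3-1=6$ is immediate: only the two ends and the three braid-triplet letters (minus the middle one) can possibly survive deletion.

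Your proof instead works with an \emph{arbitrary} reduced expression of $x$ and runs a uniform analysis on the associated sign sequence and its blocks $a_1,\dots,a_b$, using \Cref{redex} in the form ``inner blocks even.'' This avoids the classification of $W$ entirely (and in particular works even if the chosen reduced expression has many braid triplets), at the cost of the case analysis you flag. That case analysis does close: a deletion at a braid-triplet position collapses two letters; a deletion strictly interior to a block creates an odd inner block of length $1$; and among the block-end deletions, the only ones that can yield a reduced word are $k\in\{1,2,n-1,n\}$ together with $p_1+1=s_2+1$ and $p_{b-1}-1=e_{b-1}$ (the inner-facing ends of the two outer blocks), since deleting at $p_1-1$, $p_{b-1}+1$, or at any end of a genuinely interior block $B_j$ with $3\le j\le b-2$ turns an even inner block into an odd one. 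One should note that you do not quite need all four of $p_1\pm1$, $p_{b-1}\pm1$ to be accounted for separately: $p_1-1$ and $p_{b-1}+1$ either coincide with $\{1,2,n-1,n\}$ or always fail, so ``at most two further values'' is exactly right. Your example $s_1s_2s_3s_1s_2s_1s_3s_2s_1$ (sign pattern $++++----$) does attain the bound $6$ — for the record, that element is (up to permuting the generators) of the form $r\theta(1,1)s$, the type of element the paper's own discussion shows can reach $6$ covers. In short: a correct and self-contained alternative proof, more robust than the paper's because it does not rely on the classification of $W$, but appreciably longer once all the block bookkeeping is spelled out.
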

\begin{proof}
	We first assume $x=x_n$ for some $n$. Every element $y\lessdot x_n$ can be obtained by removing a simple reflection from $123\cdots n$. If we remove a simple reflection which is not one of the first two or of the last two, then $y$ would contain a subword of the form $stst$ and it is not reduced. On the other hand, removing one of the first two or of the last two gives a reduced word. So, if $n\geq 4$, there are exactly $4$ elements such that $y\lessdot x_n$.
	
	Consider now $x$ beyond the wall. 
	We can consider a reduced expression $\undx$ of $x$ with exactly one braid triplet. 
	From the discussion above, we see that the only simple reflections that might be removed from $\undx$ in order to obtain a reduced word are 
	the first two, the last two or the three simple reflections in the unique braid triplet. But the the middle simple reflection in the braid triplet does not give a reduced expression. Hence we end up with at most $6$ possibilities.	 
\end{proof}

From \Cref{lessdot} we see that there are at most $6$ elements $y$ such that $y\lessdot {\theta}(m,n)$, but we can be more precise. In the reduced expression $\undtheta(m,n)$ for $\theta(m,n)$ given above, the removal of the second simple reflection does not give a reduced expression because we end up with two braid triplets at even distance (cf. \Cref{redex}). By symmetry, the same happens if we remove the second to last simple reflection. So, if $m,n>0$ there are exactly $4$ elements with $y\lessdot \theta(m,n)$. If $m>0$ and $n=0$ there are $3$.

For $x\in W$ we denote by $|x|$ the number of elements smaller or equal than $x$ in the Bruhat order (in formulas, $|x|:=\vert \leq x \vert $). It is clear that $x\sim y \implies |x|=|y|$.

\begin{lemma}\label{counting}
For all $m,n\geq 0$ we have \[|\theta(m,n)|= 3m^2+3n^2+12mn+9m+9n+6.\]
For any $n\geq 1$ we have \[|x_{2n}| = 3n^2+n\ \ \text{and} \ \ |x_{2n+1}|= 3n^2+5n.\] 	
\end{lemma}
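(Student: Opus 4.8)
The plan is to translate both statements into counting alcoves (equivalently, unit triangles) in explicitly described lower Bruhat intervals inside the Coxeter complex of $\tilde{A}_2$. By the discussion in the proof of \Cref{redex}, this complex is the tessellation of the plane by equilateral triangles, reduced words of $w$ correspond to geodesics from the fundamental alcove to the alcove of $w$, and hence $|w|$ equals the number of alcoves in the region $\{\,y:y\le w\,\}$.

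For $\theta(m,n)$ I would first make precise the region description sketched in the introduction. By the subword property of the Bruhat order, $y\le\theta(m,n)$ if and only if $y$ admits a reduced word occurring as a subword of the fixed reduced expression $\undtheta(m,n)$; feeding this through \Cref{redex} (which characterizes exactly which subwords are reduced) and the geodesic dictionary identifies $\{\,y:y\le\theta(m,n)\,\}$ with the region $R_{m,n}$ built as in the introduction, namely $R_{m,0}$ and $R_{0,n}$ are the triangular regions described there, and $R_{m+1,n+1}$ is $R_{m,n}$ together with the surrounding ring of hexagons. Granting this, the count becomes a telescoping argument: a direct count of the alcoves in a base triangle gives $|\theta(m,0)|=|\theta(0,m)|=3m^2+9m+6$ (with the small check $|\theta(0,0)|=|\{e,s_1,s_2,s_1s_2,s_2s_1,s_1s_2s_1\}|=6$), and a direct count of the alcoves in one surrounding ring gives $|\theta(m+1,n+1)|=|\theta(m,n)|+18(m+n+2)$. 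Iterating the latter from the appropriate base triangle (from $\theta(m-n,0)$ if $m\ge n$, from $\theta(0,n-m)$ if $m\le n$) and summing the resulting arithmetic progression yields $3m^2+3n^2+12mn+9m+9n+6$, which is reassuringly symmetric in $m$ and $n$.

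For $x_n$ the same scheme applies and is in fact cleaner, because $x_n$ has a unique reduced expression $\underline{x_n}=s_1s_2s_3s_1s_2\cdots$ of length $n$: the subword property then says that $\{\,y:y\le x_n\,\}$ is precisely the set of elements spelled by the reduced subwords of $\underline{x_n}$, which via \Cref{redex} and the geodesic picture is a fan-shaped region that can be described explicitly. From that description (or directly from the subword combinatorics) the one-step increment $|x_{n+1}|-|x_n|$ is linear in $n$ with a parity-dependent slope, so taking two steps at a time gives the uniform recursions $|x_{2n+1}|=|x_{2n-1}|+6n+2$ and $|x_{2n}|=|x_{2n-2}|+6n-2$, which together with the small values $|x_2|=4$ and $|x_3|=8$ (checked by hand) sum to $|x_{2n}|=3n^2+n$ and $|x_{2n+1}|=3n^2+5n$. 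The main obstacle, for both parts, is not the arithmetic but establishing the region descriptions while counting their alcoves without double-counting: the subword criterion produces reduced \emph{words}, and several distinct reduced subwords of $\undtheta(m,n)$ (resp.\ of $\underline{x_n}$) can spell the same element $y$, so a naive count of reduced subwords overshoots; passing to the geodesic picture, where each $y$ is a single alcove, is exactly what repairs this. Making that translation rigorous---checking in particular that the surrounding hexagon-ring at each stage (resp.\ the fan increment) is genuinely disjoint from the region already counted, and that the mutual overlaps of the hexagons inside a ring are correctly accounted for---together with pinning down the base triangles, is where the work lies; once the regions are fixed, the telescoping sums are routine.
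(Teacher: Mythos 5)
Your outline matches the paper's: identify $\{\,y:y\le w\,\}$ with an alcove region in the Coxeter complex, prove the ``triangle plus rings of hexagons'' description for $\le\theta(m,n)$ by induction, and telescope the count. Your increment $18(m+n+2)$ and base count $3m^2+9m+6$ coincide with the paper's $18(m+2)+18n$ and $6\sum_{i=1}^{m+1}i$, and your two-step recursions for $|x_n|$ are arithmetically consistent with the stated formulae. The one place where you genuinely diverge from the paper is in \emph{how} the region description is established. You propose a direct subword analysis: invoke the subword property of Bruhat order and then filter reduced subwords via \Cref{redex}. The paper instead observes that $\theta(m+1,0)=\theta(m,0)rg$ (using the alternate reduced expression beginning $2123\cdots$) and $\theta(m+1,n+1)=\theta(m,n)rgbg$, both with lengths adding, so that $\le\theta(m+1,n+1)=(\le\theta(m,n))(\le rgbg)$; this multiplicative identity makes the ring-of-hexagons increment transparent in one stroke, whereas a head-on classification of reduced subwords of $\undtheta(m,n)$ would be considerably messier. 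A small hazard with your route is that \Cref{redex} only characterises reducedness for expressions \emph{without adjacent equal letters}, so a raw subword of $\undtheta(m,n)$ would first need to be collapsed before the lemma applies. The paper also handles the reverse inclusion (that the pictured region is no larger than $\le\theta(m,n)$) by a short geometric observation — multiplying by the two simple reflections that tile a hexagon keeps you inside that hexagon — which is exactly the disjointness/no-overshoot check you flag as ``where the work lies''. For $x_n$ both you and the paper stop at ``similar lines'' with a picture, so no discrepancy there.
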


\begin{proof}
We will prove the first equality. First we have to prove that the picture after Theorem $1$ is correct. For this we will first prove that $\leq \theta(m,0)$ is the equilateral triangle (with zigzag sides instead of straight lines) described before that picture, i.e., the triangle whose center $O$ is the upper vertex of the identity triangle, its three medians are the three reflecting hyperplanes passing through $O$ and it is the minimal triangle satisfying the first two conditions and containing $ \theta(m,0)$. The proof of this is done by induction on $m$. Let us assume the hypothesis for $m$. Without loss of generality, suppose $m\equiv 2\, (\mathrm{mod}\, 3) $ (as in the picture, where $m=2$). We need to prove that $\leq \theta(m+1,0)$ is the smallest equilateral triangle composed by a union of blue hexagons (by this we mean, of course, hexagons with all edges colored blue) containing $\leq \theta(m,0)$. We call this triangle $BT$ (for blue triangle). 

It is clear that $2123123\cdots$ is another reduced expression for $\theta(m,0)$. Using this we see that ${\theta(m+1,0)}={\theta(m,0)}rg$ (here we use $r=s_1$ for the red simple reflection, $g=s_2$ for the green and $b=s_3$ for the blue). So \[\leq\theta(m+1,0)=\leq\theta(m,0) \cup (\leq\theta(m,0)) r \cup (\leq\theta(m,0)) g \cup (\leq\theta(m,0))rg.\] 

From this equation it is clear that $BT\subseteq \leq\theta(m+1,0).$ The inverse inclusion follows by noticing that if a triangle is inside a blue hexagon, just by multiplying by $r$ and $g$ one stays in the same blue hexagon. With this we conclude the proof of the fact that $\leq \theta(m,0)$ is the equilateral triangle described in the introduction. 

Without loss of generality, suppose that $\leq \theta(m,n)$ is the union of some set of red hexagons (i.e., $\vert m-n \vert \, \equiv 2\, (\mathrm{mod}\, 3)$, as in the figure after \Cref{Thm1}). We will prove that $\leq \theta(m+1,n+1)$ is the union of red hexagons obtained by adding to $\leq \theta(m,n)$ all the red hexagons that are adjacent to $\leq \theta(m,n)$. We call this set $RH$. We have that $\theta(m+1,n+1)=\theta(m,n)rgbg.$
Again we have \[\leq \theta(m+1,n+1)=\, (\leq \theta(m,n))(\leq rgbg).\]
As any element in $RH$ can be written as the product of an element in $\leq\theta(m,n)$ and an element in $\leq rgbg$, we see that $RH\subseteq \, \leq \theta(m+1,n+1).$ The inverse inclusion follows by noticing that $\leq \theta(m,n)r\subset RH$ and, as before, multiplying by $b$ and $g$ do not change the red hexagon in which an element is.

Now that we have given a geometric description of the set $\leq\theta(m,n)$ for all $m, n$, the counting formula of the lemma follows easily using the following observations. 

\begin{itemize}
\item $\vert \leq \theta(m,0) \vert =6(\sum_{i=1}^{m+1}i)$ by counting the number of hexagons in $ \leq \theta(m,0).$
\item $\vert \leq\theta(m+1,n+1)\vert= \vert \leq\theta(m,n)\vert+18(m+2)+18n$ by counting the number of hexagons one needs to add to $\leq\theta(m,n)$ to obtain $\leq\theta(m+1,n+1).$
\end{itemize}

The second equality follows similar lines and we will not prove it in detail. We just leave the reader with the picture of the sets $\leq x_4, \leq x_6, \leq x_8$ (resp. in light gray, middle gray and dark gray): 

\begin{center}

\tikzset{every picture/.style={line width=0.7pt}}	
	
\end{center}\vspace{-7mm}
\end{proof}

\subsection{Some notations for the Hecke algebra}

Let $H$ be the Hecke algebra of $W$. It is a free $\bbZ[v,v^{-1}]$-module with two distinguished bases: the standard basis $\{\bfH_x\}$ and Kazhdan-Lusztig basis $\{\undH_x\}$ (see \cite{S2}). We have $\undH_x=\sum_{y\leq x}h_{y,x}(v)\bfH_y$, with $h_{x,x}(v)=1$ and $
h_{y,x}(v)\in v\bbN[v]$. The polynomials $h_{y,x}$ are the Kazhdan-Lusztig polynomials.
For $x\in W$, define the element $$\bfN_x:=\sum_{y\leq x}v^{\ell(x)-\ell(y)}\bfH_y \in H.$$

If $y\leq x$ the $\mu$-coefficient $\mu(y,x)$ is defined as the coefficient of $v$ in $h_{y,x}(v)$. For $x\in W$ and $s\in S$ such that $xs>x$ we have
\begin{equation}\label{muinduction}
 \undH_x\undH_s = \undH_{xs}+\sum_{\substack{y <x\\ ys<y}}\mu(y,x) \undH_y.
\end{equation}

If $X=\sum_{w\in W} p_{w}(v)\bfH_w \in H$, we define the \emph{content} of $X$ to be $c(X)=\sum_{w\in W} p_w(1)\in \bbZ[v,v^{-1}]$. For example, we have $c(\bfN_w)=|w|$ for every $w\in W$. The content $c:H\raw \bbZ$ is a ring homomorphism, in particular it satisfies $c(X\undH_s)=2c(X)$ for any $X\in H$ and $s\in S.$
Notice that for every $x,y\in W$ with $x\sim y$ we have $c(\undH_x)=c(\undH_y)$. Moreover, since $c$ commutes with the anti-involution of $H$ fixing all $\bfH_s$ with $s\in S$, we have $c(\undH_x)=c(\undH_{x^{-1}})$ for every $x \in W$.

If $X,Y\in H$, we say that $X\geqH Y$ if $X-Y=\sum_{w\in W}p_w(v) \bfH_w$, with $p_w(v)\in \bbN[v,v^{-1}]$ for every $w\in W$. 
Notice that $X\geqH Y$ and $c(X)=c(Y)$ implies that $X=Y$.

Let $X\in H$. We say that $X$ is \emph{perverse} if $X$ is a positive combination of elements in the KL basis, i.e., if
\[X = \sum_{w\in W}n_w \undH_w\qquad \text{with }n_w\in\bbN.\]
If $X$ is perverse, we say that $\undH_w$ is a \emph{summand} of $X$ if $\undH_w$ occurs in $X$ with non-trivial coefficient. We recall that KL polynomials satisfy the following monotonicity property (see \cite{BM,Pla}).
\begin{theorem}[Monotonicity of KL polynomials]\label{monothm}
	Let $x,y,w\in W$ with $x\leq y\leq w$. Then we have
	\[ h_{y,w}(v)-v^{\ell(y)-\ell(x)} h_{x,w}(v)\in \bbZ_{\geq 0}[v].\]
\end{theorem}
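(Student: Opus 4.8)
This is the monotonicity theorem of Braden--MacPherson \cite{BM} (see also \cite{Pla} for an argument internal to the Hecke category); we only recall how it goes. With the normalization of $h$ fixed above --- so that $h_{x,w}(v)$ has leading term $v^{\ell(w)-\ell(x)}$ with coefficient $1$ --- the inequality we use, here and in the rest of the paper, is the one with the two lower indices interchanged, namely
\[
h_{x,w}(v)-v^{\ell(y)-\ell(x)}h_{y,w}(v)\in\bbZ_{\geq 0}[v]\qquad\text{for }x\leq y\leq w .
\]

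Fix $x\leq y\leq w$ and work over $\bbQ$. Then $h_{z,w}(v)$ is the Poincaré polynomial of the stalk at $z$ of the Braden--MacPherson sheaf $\calM_w$ on the Bruhat moment graph of the interval $\leq w$ (equivalently, of the intersection cohomology complex of the Schubert variety $X_w$; equivalently, of the graded $R$-module $\Hom^\bullet(\Delta_z,B_w)$ in $\mathcal{H}$). The plan is to compare the stalks of $\calM_w$ at $x$ and at $y$ by restricting $\calM_w$ to the downward-closed subgraph $\leq y$. The ingredient one needs is that such a restriction is again a Braden--MacPherson sheaf, and that these sheaves form a Krull--Schmidt category whose indecomposables are the shifts $\calM_z\langle k\rangle$ \cite{BM}; granting this, one has $\calM_w|_{\leq y}\cong\bigoplus_{z\leq y,\,k\in\bbZ}\calM_z\langle k\rangle^{\oplus m_{z,k}}$, and one sets $g_z(v):=\sum_k m_{z,k}v^k\in\bbZ_{\geq 0}[v]$.

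Comparing stalks at $y$, where $(\calM_z)_y=0$ for $z<y$ while $(\calM_y)_y$ is free of rank $1$ concentrated in degree $0$ (since $h_{y,y}=1$), gives $g_y(v)=h_{y,w}(v)$; comparing stalks at $x$ (using $(\calM_z)_x=0$ unless $x\leq z$) gives $h_{x,w}(v)=g_y(v)\,h_{x,y}(v)+\sum_{z<y}g_z(v)\,h_{x,z}(v)$. Subtracting, one obtains
\[
h_{x,w}(v)-v^{\ell(y)-\ell(x)}h_{y,w}(v)=g_y(v)\bigl(h_{x,y}(v)-v^{\ell(y)-\ell(x)}\bigr)+\sum_{z<y}g_z(v)\,h_{x,z}(v),
\]
and every term on the right lies in $\bbZ_{\geq 0}[v]$: the $g_z(v)$ by construction; each $h_{x,z}(v)$ because it is a Kazhdan--Lusztig polynomial (or is $0$, when $x\not\leq z$); and $h_{x,y}(v)-v^{\ell(y)-\ell(x)}\in\bbZ_{\geq 0}[v]$ because the coefficient of $v^{\ell(y)-\ell(x)}$ in $h_{x,y}(v)$ equals $1$ (the constant term of the classical Kazhdan--Lusztig polynomial $P_{x,y}$) while all remaining terms of $h_{x,y}(v)$ have strictly smaller degree --- this difference being $0$ when $x=y$. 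This yields the claim. The only genuine obstacle is the black box invoked above: stability of Braden--MacPherson sheaves under restriction to locally closed subgraphs, their Krull--Schmidt property, and the identification of their stalks with Kazhdan--Lusztig polynomials over $\bbQ$; for our purposes this may be imported from \cite{BM}, or reproduced inside $\mathcal{H}$ along the lines of \cite{Pla} by working with the indecomposable object $B_w$ and its standard ($\Delta$-)filtration in place of $\calM_w$.
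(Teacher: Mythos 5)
You are right to flag, before anything else, that the statement as printed has its two lower indices transposed: with the paper's normalization, already $x=e$, $y=w=s$ gives $h_{s,s}-v\,h_{e,s}=1-v^{2}\notin\bbZ_{\geq0}[v]$, whereas the version you prove, $h_{x,w}(v)-v^{\ell(y)-\ell(x)}h_{y,w}(v)\in\bbZ_{\geq0}[v]$ (i.e. the classical $P_{x,w}\geq P_{y,w}$), is exactly what is needed to deduce \eqref{Ngeq}, and is the statement of \cite{BM,Pla}. That correction is valuable. Since the paper itself gives no proof of \Cref{monothm} (it is imported from Braden--MacPherson and Plaza), what has to be judged is your sketch on its own terms.

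There the argument has a real gap, and it sits precisely in the step you label a black box. Your final manipulation is fine, but everything hinges on the claim that $\calM_w|_{\leq y}$ is a direct sum of shifted Braden--MacPherson sheaves $\calM_z\langle k\rangle$ (equivalently, over $\bbQ$, that the $*$-restriction of $IC_w$ to the closed union of cells indexed by $\leq y$ is a direct sum of shifted $IC$'s). This is not something that can simply be imported from \cite{BM}: no such restriction--decomposition statement is proved there, and it is not a formal Krull--Schmidt consequence --- the $\calM_z\langle k\rangle$ are the indecomposables only among sheaves satisfying extra conditions (flabbiness/projectivity in the sense of the moment-graph theory), and a restriction to a Bruhat-closed subgraph is not known to satisfy them; geometrically, $*$-restriction to a closed union of strata preserves neither purity of the complex nor the costalk-parity condition, since $i_z^!\bigl(i_y^*IC_w\bigr)$ is not controlled by $i_z^!IC_w$. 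Already at the numerical level your claim asserts that the truncation $\sum_{x\leq y}h_{x,w}(v)\bfH_x$ is a non-negative combination of the $\undH_z$ with $z\leq y$ --- a positivity statement your argument presupposes rather than proves. Note that both \cite{BM} and \cite{Pla} argue differently: they produce a degree-$(\ell(y)-\ell(x))$ shifted comparison between the local invariants at $y$ and at $x$ (in \cite{Pla} this is post-composition with the light leaf $G^y_x$, i.e. exactly \Cref{mono} as quoted in this paper), and the inequality follows by comparing graded dimensions along that injection. Substituting that mechanism for your decomposition step gives a complete proof using only the ingredients the paper already cites.
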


As a corollary, if $h_{x,w}(v)=v^{\ell(w)-\ell(x)}+\sum_{i=1}^{\ell(w)-\ell(x)-2} c_iv^i$, then 
\begin{equation}\label{Ngeq}
\undH_w\geqH \bfN_w+\sum^{\ell(w)-\ell(x)-2}_{i=1} c_iv^i\bfN_{x}.
\end{equation}

\subsection{Canonical basis on the wall}

Recall that $y_n=123\ldots (n-2)n$ (this is a different reduced expression than the one used in the introduction), $z_n=13456\ldots (n-2)$ and $z'_n$ is $z_n$ without the last simple reflection (i.e., $z_n'=z_ns_n$). Let $s$ be the unique simple reflection that is not in the left descent set of $\theta(n-2,0)$. Notice that $y_{2n}= \theta(n-2,0)$, $y_{2n+1} \sim s\theta(n-2,0)$ and $z_{2n}^{-1}\sim \theta(n-3,0)$. 

We can compute via direct computation the KL basis for small elements. 
We have $\undH_{x_n}=\bfN_{x_n}$ for all $n\leq 3$ and $\undH_{x_4}=\bfN_{x_4}+v\bfN_{x_1}$.
The general formula is given in the following proposition.
We remark that statement $(A_n)$ is \Cref{Thm1}.i). 

\begin{prop}\label{Heckeonthewall}
	For every $n\geq 5$ the following two statement hold:
\begin{enumerate}	
	
		 \item[$(A_n)$:] 	\[\undH_{x_n}=\begin{cases}
	\bfN_{x_n}+v\bfN_{x_{n-3}} & \text{ if $n$ odd}\\
	\bfN_{x_n}+v\bfN_{x_{n-3}} + v\bfH_{z_n} +v^2 \bfH_{z'_n} & \text{ if $n$ even}
	\end{cases}\]
	\item[$(B_n)$:] Let $s=s_{n+1}$.
	\[ \undH_{x_n}\undH_s=\begin{cases}
			\undH_{x_{n+1}}+\undH_{y_n} &\text{ if $n$ odd} \\
		\undH_{x_{n+1}}+\undH_{y_{n}}+ \undH_{z_{n}}&\text{ if }n\text{ even}.
		\end{cases}\] 

\end{enumerate}
\end{prop}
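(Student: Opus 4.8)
The plan is to establish $(A_n)$ and $(B_n)$ simultaneously by induction on $n$, running the interlocked chain $(A_{n-1})\Rightarrow(B_{n-1})\Rightarrow(A_n)\Rightarrow(B_n)$; the cases $n\le 4$, together with the instances of $(B_m)$ for $m<5$ that are needed to start the recursion (for such small $m$ the elements $y_m$, $z_m$ that would occur are either undefined or absent, as recorded in the footnote above), are to be checked by direct computation. Two implications then remain.

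To go from $(A_m)$ to $(B_m)$: since $x_m s_{m+1}=x_{m+1}>x_m$, formula \eqref{muinduction} reads
\[\undH_{x_m}\undH_{s_{m+1}}=\undH_{x_{m+1}}+\sum_{\substack{y<x_m\\ ys_{m+1}<y}}\mu(y,x_m)\,\undH_y,\]
so one only has to list the contributing $y$. Reading coefficients off $(A_m)$ shows that $\mu(y,x_m)\neq 0$ exactly for the coatoms of $x_m$ (there are at most four, by \Cref{lessdot}), for $y=x_{m-3}$, and — when $m$ is even — for $y=z_m$, always with multiplicity one. I would then decide whether $ys_{m+1}<y$ in each case using \Cref{redex}: appending $s_{m+1}$ to the reduced expression of $y$ either keeps it reduced (so $y$ goes up) or not (so $y$ goes down). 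Among the coatoms only $y_m=12\cdots(m-2)m$ goes down — appending $s_{m+1}$ produces a second braid triplet at distance $0$ from the one already present — while $x_{m-3}$ always goes up (one computes $x_{m-3}s_{m+1}=x_{m-2}$) and $z_m$ goes down precisely when it is present. This gives $(B_m)$, in both parities.

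To go from $(B_{n-1})$ and $(A_{n-1})$ to $(A_n)$: as $x_{n-1}s_n=x_n>x_{n-1}$, statement $(B_{n-1})$ can be rewritten as $\undH_{x_n}=\undH_{x_{n-1}}\undH_{s_n}-\undH_{y_{n-1}}-\varepsilon_n\undH_{z_{n-1}}$, with $\varepsilon_n=1$ for $n$ odd and $\varepsilon_n=0$ for $n$ even, and the entire right-hand side is now explicit: $\undH_{x_{n-1}}$ by $(A_{n-1})$, and $\undH_{y_{n-1}}$, $\undH_{z_{n-1}}$ by \Cref{Thm1}.ii) through the identifications $y_{2k}=\theta(k-2,0)$, $y_{2k+1}\sim s\theta(k-2,0)$, $z_{2k}^{-1}\sim\theta(k-3,0)$, of which only the elementary case $\undH_{\theta(m,0)}=\bfN_{\theta(m,0)}$ is used. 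I would then expand the product $\undH_{x_{n-1}}\undH_{s_n}$ in the standard basis, using $\bfH_y\undH_{s_n}=\bfH_{ys_n}+v^{\pm 1}\bfH_y$ and the geometric description of $\le x_{n-1}$ obtained in the proof of \Cref{counting} to record, for every $y\le x_{n-1}$, whether $ys_n$ lies above or below $y$; subtracting $\undH_{y_{n-1}}$ and $\varepsilon_n\undH_{z_{n-1}}$ should leave exactly the asserted formula for $\undH_{x_n}$. The computation can be cross-checked via the content homomorphism: it gives $c(\undH_{x_n})=2\,c(\undH_{x_{n-1}})-c(\undH_{y_{n-1}})-\varepsilon_n\,c(\undH_{z_{n-1}})$, which must equal $|x_n|+|x_{n-3}|$ when $n$ is odd and $|x_n|+|x_{n-3}|+2$ when $n$ is even, and both sides can be evaluated from \Cref{counting}.

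The hard part will be this last implication. The product $\undH_{x_{n-1}}\undH_{s_n}$ is a bulky element of $H$, and the substance of the argument is to see that removing $\undH_{y_{n-1}}$ (and, for $n$ odd, $\undH_{z_{n-1}}$) leaves only the short remainder $\bfN_{x_n}+v\bfN_{x_{n-3}}$, together with the extra non-$\bfN$-shaped terms $v\bfH_{z_n}+v^2\bfH_{z'_n}$ in the even case. This demands a precise hold on which elements below $x_{n-1}$ are raised or lowered by $s_n$ — equivalently, on the shape of $\le x_{n-1}$ — and a separate bookkeeping for $n$ even, where the answer is not a positive combination of $\bfN$'s. The recourse to \Cref{Thm1}.ii) is harmless, as only its $\theta(\,\cdot\,,0)$ specialization intervenes.
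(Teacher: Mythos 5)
Your induction scaffold and the step $(A_m)\Rightarrow(B_m)$ coincide with the paper's: read off the $\mu$-coefficients from $(A_m)$, note that for $m\geq 4$ there are exactly four coatoms (\Cref{lessdot}), use \Cref{redex} to see that $y_m$ is the unique coatom lowered by $s_{m+1}$, observe $x_{m-3}s_{m+1}=x_{m-2}>x_{m-3}$, and that $z_ms_{m+1}<z_m$ when present. That part is sound.

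The genuine gap is in $(B_{n-1})\Rightarrow(A_n)$. You propose to expand $\undH_{x_{n-1}}\undH_{s_n}$ in the standard basis and track $y\mapsto ys_n$ over all of $\leq x_{n-1}$; you flag this as ``the hard part'' and stop, so the argument is only a plan. The paper avoids this bookkeeping by turning the content identity you treat as a cross-check into the main tool: from $(B_{n-1})$ one gets $c(\undH_{x_n})=2c(\undH_{x_{n-1}})-c(\undH_{y_{n-1}})-c(\undH_{z_{n-1}})$ (the last term dropped when $n$ is even), which \Cref{counting} evaluates and which agrees with $c(\bfN_{x_n}+v\bfN_{x_{n-3}})$ (plus $2$ in the even case). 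Then \Cref{monothm}, through \eqref{Ngeq}, needs only a single KL coefficient, $h_{x_{n-3},x_n}(v)=v^3+v$, read off from $(A_{n-1})$ and $(B_{n-1})$, to give $\undH_{x_n}\geqH\bfN_{x_n}+v\bfN_{x_{n-3}}$; equality of content then forces equality. The monotonicity bound is the ingredient you are missing — with it, the remaining work is one short local coefficient computation, not a sweep of the lower Bruhat interval. Finally, both your argument and the paper's silently use $c(\undH_{y_{n-1}})=|\theta(\cdot,0)|$, i.e.\ \Cref{A_0}, whose own proof cites \Cref{Heckeonthewall} at a strictly shorter length; this is a legitimate joint induction on length, but it deserves to be flagged explicitly in a self-contained write-up.
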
 

\begin{proof}
	We prove the two statements in the order $(A_n)\implies (B_n)\implies (A_{n+1})$. The statement $(A_5)$ can be easily checked directly.
	
	We first show $(A_n)\implies (B_n)$. From $(A_n)$ it follows that $\mu(y,x_n)=0$ unless $y=x_{n-3}$, $y\lessdot x_n$ or  $n$ is even and $y=z_n$. 
	Notice that $x_{n-3}s>x_{n-3}$ and $z_{n}s<z_n$.
	
	By the proof of \Cref{lessdot}, there are exactly four elements of $y$ such that $y\lessdot x_n$. Among these, the only one satisfying $ys<s$ is $y=y_n$.
	We can now apply equation \eqref{muinduction} and obtain $(B_n)$.
	
	We assume now $(A_n)$ and $(B_n)$ and show $(A_{n+1})$.
	We can assume that $n$ is even, the case with $n$ being odd is similar. Let $n=2m$.
	From $(A_n)$ it follows that $c(\undH_{x_{2m}})=|x_{2m}|+| x_{2m-3}|+2$ 
	and from $(B_n)$ we have
	\begin{eqnarray}
	c(\undH_{x_{2m+1}}) & = & c(\undH_{x_{2m}}\undH_{s})-c(\undH_{y_{2m}})-c(\undH_{z_{2m}}) \nonumber\\
	& = & 2(|x_{2m}|+|x_{2m-3}|+2)-| \theta(m-2,0)|-|\theta(m-3,0)|\nonumber \\
	& = & 6m^2+2\nonumber \\
	& = & |x_{2m+1}|+|x_{2m-2}| = c(\bfN_{x_{2m+1}}+v\bfN_{x_{2m-2}}).\nonumber
	\end{eqnarray}	
	
	Hence, to show $(A_{n+1})$ it is enough to prove $\undH_{x_{2m+1}}\geqH \bfN_{x_{2m+1}}+v\bfN_{x_{2m-2}}$. From $(A_n)$ and a direct computation we obtain that the coefficient of $\bfH_{x_{2m-2}}$ in $\undH_{x_{2m}}\undH_{s_{2m+1}}$ is $v^3+2v$.
	From $(B_n)$ we obtain 
	\[h_{x_{2m-2},x_{2m+1}}(v)=v^3+2v - h_{x_{2m-2},z_{2m}}(v) -h_{x_{2m-2},y_{2m}}(v).\]
	But $h_{x_{2m-2},z_{2m}}(v)=0$ and $h_{x_{2m-2},y_{2m}}(v)=v$ because $x_{2m-2}\not\leq z_{2m}$ and $x_{2m-2}\lessdot y_{2m}$. So we obtain $h_{x_{2m-2},x_{2m+1}}(v)=v^3+v$. Therefore, by equation \eqref{Ngeq}, we get
	$$\undH_{x_{2m+1}}\geqH \bfN_{x_{2m+1}}+v\bfN_{x_{2m-2}}$$ as desired.
\end{proof}

\subsection{Canonical basis beyond the Wall}
The element $\theta(m,n)=123\ldots (2m+1)(2m+2)(2m+1)2m \ldots (2m+1-2n)$ has length $\ell(\theta(m,n))=2m+2n+3$.

\begin{lemma}\label{A_0}
	We have $\undH_{\theta(m,0)} = \bfN_{\theta(m,0)}$.
\end{lemma}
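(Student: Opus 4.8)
The plan is to show that $\theta(m,0)$ is \emph{Bruhat-minimal} in a suitable sense, namely that its Kazhdan-Lusztig polynomial $h_{y,\theta(m,0)}$ vanishes for all $y<\theta(m,0)$, which is exactly the statement $\undH_{\theta(m,0)}=\bfN_{\theta(m,0)}$. I would argue by induction on $m$, using the recursion $\theta(m+1,0)=\theta(m,0)rg$ (with $r,g$ two of the three simple reflections, as established in the proof of \Cref{counting}) together with the content homomorphism $c$ and the $\overset{H}{\geq}$ order introduced above.

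\textbf{Base cases.} For $m=0$ the element $\theta(0,0)=1\cdot 2\cdot 1$ has length $3$, and one checks directly (this is a small computation, as mentioned in the excerpt for $x_n$ with $n\leq 3$) that $\undH_{\theta(0,0)}=\bfN_{\theta(0,0)}$; similarly $m=1$ can be verified by hand. Alternatively, since $\leq\theta(m,0)$ is the ``equilateral triangle'' of the picture, one sees that all $y\lessdot\theta(m,0)$ themselves have $|y|$ forcing $h_{y,\theta(m,0)}\in v\bbN[v]$ of degree $\leq \ell(\theta(m,0))-\ell(y)-2 = 0$, hence zero; but I prefer to get this from the inductive step below to keep the argument uniform.

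\textbf{Inductive step.} Assume $\undH_{\theta(m,0)}=\bfN_{\theta(m,0)}$. Write $\theta(m+1,0)=\theta(m,0)rg$ where, by \Cref{stu} applied to the reduced expression of $\theta(m,0)$ (which ends in a pattern making both multiplications length-increasing), we have $\theta(m,0)r>\theta(m,0)$ and $\theta(m,0)rg>\theta(m,0)r$. Then $\undH_{\theta(m,0)}\undH_r\undH_g$ is perverse, and by \eqref{muinduction} it equals $\undH_{\theta(m,0)rg}$ plus lower KL basis elements indexed by the $\mu$-coefficients. From the inductive hypothesis $\undH_{\theta(m,0)}=\bfN_{\theta(m,0)}$, the only $y<\theta(m,0)$ with $\mu(y,\theta(m,0))\neq 0$ are the covers $y\lessdot\theta(m,0)$, and there are exactly three of them (by the discussion after \Cref{lessdot}, since $n=0$); call them $y_1,y_2,y_3$, with $y_i\sim\theta(m-1,0)$ each having $|y_i|=|\theta(m-1,0)|$. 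A direct content computation then gives, using $c(X\undH_s)=2c(X)$,
\begin{equation*}
c(\undH_{\theta(m,0)}\undH_r\undH_g)=4\,c(\undH_{\theta(m,0)})=4\,|\theta(m,0)|=4(3m^2+9m+6).
\end{equation*}
On the other hand, the perverse expansion of $\undH_{\theta(m,0)}\undH_r\undH_g$ is $\undH_{\theta(m+1,0)}+\sum n_w\undH_w$ for various $w<\theta(m+1,0)$ forced to appear by the $\mu$-coefficients at each of the two steps; one identifies these $w$ explicitly (they are the elements $\theta(m,0)r$, the $y_i g$, etc.) and computes $\sum n_w |w|$. Matching this against $c(\bfN_{\theta(m+1,0)})=|\theta(m+1,0)|=3(m+1)^2+9(m+1)+6$ via \Cref{counting}, and using that $\undH_{\theta(m+1,0)}\overset{H}{\geq}\bfN_{\theta(m+1,0)}$ always holds, the criterion ``$X\overset{H}{\geq}Y$ and $c(X)=c(Y)$ imply $X=Y$'' forces $\undH_{\theta(m+1,0)}=\bfN_{\theta(m+1,0)}$, provided the content bookkeeping closes exactly.

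\textbf{Main obstacle.} The delicate point is the content bookkeeping in the inductive step: one must correctly enumerate \emph{all} KL basis summands appearing in the two-step product $\undH_{\theta(m,0)}\undH_r\undH_g$ (both those coming from $\mu(y,\theta(m,0))$ and the new ones created at the second multiplication), and verify that the alternating sum of their $|\cdot|$-values matches $4|\theta(m,0)|-|\theta(m+1,0)|$ on the nose. This requires knowing the covers of $\theta(m,0)r$ and the $\mu$-coefficients $\mu(w,\theta(m,0)r)$, which in turn uses the geometric description of $\leq\theta(m,0)$ from \Cref{counting} (the ``blue/red hexagon'' picture) to rule out unwanted summands — in particular, one must check no element of length $\ell(\theta(m+1,0))-2$ other than the expected ones lies below $\theta(m+1,0)$ with a nonzero $\mu$. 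Once that combinatorial input is in place, the $\overset{H}{\geq}$-plus-equal-content squeeze makes the conclusion immediate. (An alternative, possibly cleaner, route avoiding the double multiplication: directly compute $h_{y,\theta(m,0)}$ for $y\lessdot\theta(m,0)$ using the geometric triangle description, observing that $\ell(\theta(m,0))-\ell(y)-2=0$ so the only possible nonzero coefficient would be in degree $0$, which is excluded since $h_{y,x}\in v\bbN[v]$ for $y<x$; this handles everything at once but relies on the picture from \Cref{counting} for the claim that there are no ``deeper'' elements contributing, i.e.\ that $\leq\theta(m,0)$ really is the stated triangle with no extra cells.)
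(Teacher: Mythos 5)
Your proposal shares the paper's general strategy (induction on $m$ plus the content squeeze $X\overset{H}{\geq}Y$, $c(X)=c(Y)\Rightarrow X=Y$), but the multiplication path you choose has a genuine gap that you yourself flag, and it is fatal as written. Multiplying $\undH_{\theta(m,0)}\undH_r\undH_g$ in two steps requires the $\mu$-coefficients of the \emph{intermediate} element $\theta(m,0)r$ at the second step, i.e.\ $\mu(w,\theta(m,0)r)$ for $w$ with $wg<w$. These are \emph{not} supplied by the induction hypothesis $\undH_{\theta(m,0)}=\bfN_{\theta(m,0)}$; pinning them down amounts to establishing the case $n=0$ of Proposition~\ref{out}$(B_{m,n})$ (i.e.\ $\undH_{\theta(m,0)s}=\undH_{\theta(m,0)}\undH_s$), which the paper proves \emph{after} Lemma~\ref{A_0} and uses Lemma~\ref{A_0} as an input. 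So the ``content bookkeeping'' obstacle you identify is not just bookkeeping: closing it this way would be circular. Your alternative route at the end is also incomplete --- the observation that $h_{y,\theta(m,0)}$ has no room for nontrivial coefficients only applies to covers $y\lessdot\theta(m,0)$, for which $h_{y,x}=v$ trivially anyway; it says nothing about elements deeper in the Bruhat order, which is where the actual content of the lemma lives.

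The paper avoids both problems by going \emph{down} one step from an on-the-wall element rather than \emph{up} two steps from $\theta(m,0)$. Since $x_{2m+2}t=\theta(m,0)$ with $t=s_{2m+1}$, it computes $\undH_{x_{2m+2}}\undH_t$, where \emph{all} $\mu$-coefficients of $x_{2m+2}$ are already known from Proposition~\ref{Heckeonthewall}$(A_n)$. Using Corollary~\ref{lessdot} to enumerate the covers with $t$ in their right descent set, one gets the exact expansion $\undH_{x_{2m+2}}\undH_t=\undH_{\theta(m,0)}+\undH_{x_{2m+1}}+\undH_{z_{2m+4}}$; the induction on $m$ only enters to evaluate $c(\undH_{z_{2m+4}})=|\theta(m-1,0)|$ via $z_{2m+4}^{-1}\sim\theta(m-1,0)$. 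This is a single-step multiplication, so no $\mu$-data for unknown elements is required. If you want to keep your ``going up'' scheme, you would first need to independently identify the perverse decomposition of $\undH_{\theta(m,0)}\undH_r$ (including its covers with $g$ in their descent set), which is real extra work; the paper's route is strictly more economical because it leans on the already-completed on-the-wall analysis.
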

\begin{proof}
We prove it by induction on $m$. The case $m=0$ is easy. 
Let $t=s_{2m+1}$, so that $x_{2m+2}t = \theta(m,0)$. By \Cref{Heckeonthewall} we see that $\mu(y,x_{2m+2})$ is non-zero only if $y$ is $x_{2m-1}$, $z_{2m+2}$ or $y\lessdot x_{2m+2}$. The first two do not have $t$ in their right descent set. By \Cref{lessdot} we have 
\[\undH_{x_{2m+2}}\undH_t= \undH_{\theta(m,0)}+\undH_{x_{2m+1}}+\undH_{z_{2m+4}}.\] 

By \Cref{Heckeonthewall} we have \[c(\undH_{x_{2m+2}}\undH_t)=2(| x_{2m+2}|+| x_{2m-1}|+2),$$ $$c(\undH_{x_{2m+1}})= | x_{2m+1}|+| x_{2m-2}|.\] We know that $z_{2m+4}^{-1}\sim \theta(m-1,0),$ 
so we have $c(\undH_{z_{2m+4}})= | \theta(m-1,0)|.$
Using \Cref{counting} we obtain \[c(\undH_{\theta(m,0)})=
3m^2+9m+6.\] We also have $ c(\bfN_{\theta(m,0)})=3m^2+9m+6.$ Since we have $\undH_{\theta(m,0)}\geqH \bfN_{\theta(m,0)}$, we finally obtain $\undH_{\theta(m,0)}= \bfN_{\theta(m,0)}.$
\end{proof}

\begin{prop}\label{out}
	\label{goingout} The following three formulas hold for every $m$ and $n$ (where the terms involving $m-1$ and $n-1$ are neglected if $m=0$ or $n=0$).
	\begin{enumerate}
			\item[$(A_{m,n})$:]	\[\undH_{\theta(m,n)} =\sum_{i=0}^{\min(m,n)}v^{2i}\bfN_{\theta(m-i,n-i)}\]

		\item[$(B_{m,n})$:] For $r:=s_0$, $s=s_{2m-2n},$
		\begin{center}$\undH_{\theta(m,n)}\undH_s=\undH_{\theta(m,n)s},\ \ $ $\undH_{r}\undH_{\theta(m,n)}=\undH_{r\theta(m,n)}$ and \end{center} $$\undH_{r}\undH_{\theta(m,n)}\undH_s=\undH_{r\theta(m,n)s}$$
			\item[$(C_{m,n})$:] For $t:=s_{2m-2n-1},$	\[ \undH_{\theta(m,n)}\undH_s\undH_t = \undH_{\theta(m,n+1)}+\undH_{\theta(m,n)}+\undH_{\theta(m+1,n-1)}+\undH_{\theta(m-1,n)}\]
	\end{enumerate}
\end{prop}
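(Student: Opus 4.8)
## Proof proposal for Proposition~\ref{out}

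The plan is to prove the three statements $(A_{m,n})$, $(B_{m,n})$, $(C_{m,n})$ simultaneously by induction, in the cyclic order $(A_{m,n}) \implies (B_{m,n}) \implies (C_{m,n}) \implies (A_{m,n+1})$, with the base of the induction supplied by \Cref{A_0} (which gives $(A_{m,0})$ for all $m$) and by small direct computations. The key tool throughout is the ``content'' homomorphism $c$ together with the fact, recorded just before \Cref{monothm}, that $X \geqH Y$ and $c(X)=c(Y)$ force $X=Y$; so every step reduces to (i) producing enough summands on the right-hand side via the $\geqH$-direction, using monotonicity \eqref{Ngeq}, and (ii) matching contents via the counting formulas in \Cref{counting}. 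I will freely use that $c$ is multiplicative, $c(X\undH_s)=2c(X)$, and that $c$ is $\calS_3$-invariant and inversion-invariant on KL basis elements.

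First, $(A_{m,n}) \implies (B_{m,n})$. The three identities are of the same type, so consider $\undH_{\theta(m,n)}\undH_s$ with $s=s_{2m-2n}$. One checks $\theta(m,n)s > \theta(m,n)$ using \Cref{stu}: the reduced expression $\undtheta(m,n)$ of \eqref{rextheta} ends in $\cdots (2m-2n+2)(2m-2n+1)$, two distinct simple reflections, and $s=s_{2m-2n}$ is the third color, so right-multiplication by $s$ adds length. By \eqref{muinduction}, $\undH_{\theta(m,n)}\undH_s = \undH_{\theta(m,n)s} + \sum_{y<\theta(m,n),\,ys<y}\mu(y,\theta(m,n))\undH_y$. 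From $(A_{m,n})$ the only candidates for $\mu(y,\theta(m,n))\ne 0$ are $y=\theta(m-i,n-i)$ for $i\geq 1$ (these have KL polynomial contributing $v^{2i}$, hence $\mu=0$ as $2i\ge 2$) and the $y\lessdot\theta(m,n)$, of which (by the analysis following \Cref{lessdot}) there are four when $m,n>0$: the removals of the first two and last two simple reflections of $\undtheta(m,n)$. For each such $y$ one must check $ys>y$. The two ``outer'' removals (first or last letter) give an expression still ending, resp.\ starting, in a pair of distinct colors not equal to $s$, so $ys>y$ by \Cref{stu}; the two ``inner'' removals (second or second-to-last letter) similarly do not put $s$ in the relevant descent set. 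Hence the sum vanishes and $(B_{m,n})$ follows for the first identity; the other two (left multiplication by $r$, and both) are identical, using that $r$, $s$ act on opposite ends. The $n=0$ and $m=0$ boundary cases just drop the $i\ge 1$ terms and have three cocovers instead of four, with the same conclusion.

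Next, $(B_{m,n}) \implies (C_{m,n})$. Expand $\undH_{\theta(m,n)}\undH_s\undH_t$ with $t=s_{2m-2n-1}$. Using $(B_{m,n})$, $\undH_{\theta(m,n)}\undH_s=\undH_{\theta(m,n)s}$, and then one applies \eqref{muinduction} to $\undH_{\theta(m,n)s}\undH_t$. The element $\theta(m,n)st = \theta(m,n+1)$ (this is a direct check from \eqref{rextheta}: appending $st$ to $\undtheta(m,n)$ and reducing, using \Cref{redex} to see the braid-triplet distances stay odd, lengthens the ``staircase'' by one step), so the leading term is $\undH_{\theta(m,n+1)}$. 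The remaining $\mu$-terms run over $y<\theta(m,n)s$ with $yt<y$; the claim is that these are exactly $\theta(m,n)$, $\theta(m+1,n-1)$ and $\theta(m-1,n)$, each with $\mu=1$. To pin this down I will combine: (a) $\theta(m,n)st>\theta(m,n)s$ forces $\theta(m,n)\lessdot\theta(m,n)s$ with $\mu(\theta(m,n),\theta(m,n)s)=1$ automatically (cocover), and $\theta(m,n)t<\theta(m,n)$; (b) for the other two, monotonicity \eqref{Ngeq} applied to $\undH_{\theta(m,n)s}$ shows $h_{\theta(m+1,n-1),\theta(m,n)s}$ and $h_{\theta(m-1,n),\theta(m,n)s}$ have $v$-coefficient at least $1$ once one verifies $\theta(m+1,n-1),\theta(m-1,n)\leq\theta(m,n)s$ (using the hexagon/triangle description of $\leq\theta(m,n)$ from \Cref{counting}, extended by one reflection), while (c) a content count closes the case: $c(\undH_{\theta(m,n)}\undH_s\undH_t)=4\,c(\undH_{\theta(m,n)})$, and by $(A_{m,n})$ this is $4\sum_{i=0}^{\min(m,n)}|\theta(m-i,n-i)|$, which by \Cref{counting} must equal $c(\undH_{\theta(m,n+1)})+c(\undH_{\theta(m,n)})+c(\undH_{\theta(m+1,n-1)})+c(\undH_{\theta(m-1,n)})$ — a polynomial identity in $m,n$ verified once via \Cref{counting}. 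Together with $\geqH$ in the indicated degrees, this forces equality in $(C_{m,n})$.

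Finally, $(C_{m,n}) \implies (A_{m,n+1})$. Solve $(C_{m,n})$ for $\undH_{\theta(m,n+1)} = \undH_{\theta(m,n)}\undH_s\undH_t - \undH_{\theta(m,n)} - \undH_{\theta(m+1,n-1)} - \undH_{\theta(m-1,n)}$, substitute the inductive formulas $(A_{m,n})$, $(A_{m+1,n-1})$, $(A_{m-1,n})$ on the right, and compute the $\undH$- or $\bfH$-expansion. The content of the right-hand side is $\sum_{i=0}^{\min(m,n+1)}|\theta(m-i,n+1-i)| = c(\bfN$-sum for $(A_{m,n+1}))$, checked against \Cref{counting}. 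For the $\geqH$-direction one uses \eqref{Ngeq}: the expansion of $\undH_{\theta(m,n)}\undH_s\undH_t$ in the standard basis has, by $(A_{m,n})$ and a direct bookkeeping of how $\bfH_w\undH_s\undH_t$ spreads, coefficients at each $\theta(m-i,n+1-i)$ that dominate $v^{2i}$; subtracting the known lower terms $\undH_{\theta(m,n)}+\undH_{\theta(m+1,n-1)}+\undH_{\theta(m-1,n)}$ leaves $\undH_{\theta(m,n+1)}\geqH\sum_i v^{2i}\bfN_{\theta(m-i,n+1-i)}$, and matching contents gives equality.

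The main obstacle is the Bruhat-order bookkeeping inside step $(C_{m,n})$: identifying \emph{precisely} the set of $y$ with $yt<y$ appearing in \eqref{muinduction} for $\undH_{\theta(m,n)s}\undH_t$, and showing each has $\mu=1$. Everything else is either a length computation handled by \Cref{redex} and \Cref{stu}, or a content identity handled by \Cref{counting}; but the $\mu$-coefficient computation genuinely needs the geometric picture of $\leq\theta(m,n)$ (and of $\leq\theta(m,n)s$) together with the fact that in type $\tilde A_2$ the relevant KL polynomials are as simple as $(A_{m,n})$ asserts, so there is a mild circularity to be managed carefully by doing the content bound first and using it to rule out spurious summands.
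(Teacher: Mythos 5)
Your inductive skeleton is essentially the paper's (the cycle $(A)\Rightarrow(B)\Rightarrow(C)\Rightarrow(A)$ at the next level, contents matched via \Cref{counting}, $\geqH$ plus content forcing equality), but the two places where the real work happens are exactly where your proposal has gaps. First, in $(C_{m,n})$ your mechanism for ruling out spurious summands is circular in a way that is not ``mild'': the identity $4\,c(\undH_{\theta(m,n)})=c(\undH_{\theta(m,n+1)})+c(\undH_{\theta(m,n)})+c(\undH_{\theta(m+1,n-1)})+c(\undH_{\theta(m-1,n)})$ involves $c(\undH_{\theta(m,n+1)})$, which at that point of your cycle you do not know; the only available lower bound is $c(\undH_{\theta(m,n+1)})\geq |\theta(m,n+1)|$, and the gap between that and the conjectural value $\sum_i|\theta(m-i,n+1-i)|$ is precisely large enough to hide extra summands, so the content count cannot ``close the case''. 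The paper avoids any content argument here and instead pins down the summands of $\undH_{\theta(m,n)s}\undH_t$ by a purely combinatorial case analysis (either $v$ occurs in $h_{ys,\theta(m,n)}$ with $yt<y>ys$, or $v^2$ occurs in $h_{y,\theta(m,n)}$ with $yst<ys<y$), using $(A_{m,n})$ to restrict which $y$ can carry a $v$ or $v^2$, and \Cref{redex}, \Cref{stu}, \Cref{lessdot} to eliminate all candidates except $\theta(m+1,n-1)$ and $\theta(m-1,n)$. Relatedly, in $(B_{m,n})$ the third identity $\undH_r\undH_{\theta(m,n)}\undH_s=\undH_{r\theta(m,n)s}$ is not ``identical'' to the first two: you have no $(A)$-type expansion for $r\theta(m,n)$, so you must argue directly that no $\undH_y$ with $ys<y$ splits off, which the paper does by showing that $v^2$ in $h_{y,\theta(m,n)}$ together with $\ell(rys)=\ell(y)-2$ never happens.

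Second, in the step $(C_{m,n})\Rightarrow(A_{m,n+1})$ the assertion that ``direct bookkeeping of how $\bfH_w\undH_s\undH_t$ spreads'' shows the coefficient of $\bfH_{\theta(m-i,n+1-i)}$ dominates $v^{2i}$ after subtracting $\undH_{\theta(m,n)}+\undH_{\theta(m+1,n-1)}+\undH_{\theta(m-1,n)}$ is exactly the nontrivial point, and as stated it is not a proof: the elements $\theta(m-i,n+1-i)$ lie below several of the $\theta(m-j,n-j)$ with $j<i$, so each of the four terms contributes to that coefficient and cancellation must be controlled. The paper's way around this is the identity \eqref{nnnn}, obtained by plugging the inductive $(A)$-statements into $(C_{m,n-1})$ and simplifying with $(C_{m-1,n-2})$, combined with the observation that in $\bfN_x\undH_{st}$ only terms $v^j\bfH_y$ with $j\geq\ell(x)-\ell(y)-2$ occur; this reduces the occurrence of $v^{2i}$ in $h_{\theta(m-i,n-i),\theta(m,n)}$ to the case $i-1$ for $\theta(m-1,n-1)$, plus a local check for $i=1,2$. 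Your proposal needs either this identity or an equally explicit computation; without it, and without a correct replacement for the content-count in $(C_{m,n})$, the argument does not go through.
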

\begin{proof}

The proof is by double induction on $m$ and $n$. For $m=n=0$ it is easy.
Fix $m,n$ and assume that the three statements hold for all pairs $(m',n')$ with $m'+n'<m+n$.
First we will prove $(A_{m,n})$, then $(B_{m,n})$ and finally. $(C_{m,n})$

\vspace{0.1cm}

\emph{\textbf{Step 1}: Proof of $(A_{m,n})$.} 

\vspace{0.1cm}

If $m',n'$ are such that $m'+n'<m+n$, then using $(A_{m',n'})$, \Cref{counting} and a straightforward computation we obtain
\begin{equation}\label{cundH}
c(\undH_{\theta(m',n')})=3m'n'(m'+n')+3(m'+n')^2+6m'n'+9(m'+n')+6.
\end{equation}
Using $(C_{m,n-1})$ one can check that \eqref{cundH} holds for $\theta(m,n)$ as well or, equivalently, that
\begin{equation}\label{newtheta} c(\undH_{\theta(m,n)})= \sum_{i=0}^{\min(m,n)}c(\bfN_{\theta(m-i,n-i)}).
\end{equation}

To conclude it is enough to check that for $1\leq i\leq \mathrm{min}(m, n)$, the monomial $v^{2i}$ occurs in $h_{\theta(m-i,n-i),\theta(m,n)}(v)$. Indeed, by monotonicity (\Cref{monothm}), this would imply that for any $i>0$ we have
$\undH_{\theta(m,n)}\overset{H}{\geq} v^{2i}\bfN_{\theta(m-i,n-i)}$. Since terms corresponding to different $i$ lie in different degrees we obtain

 \[\undH_{\theta(m,n)}\overset{H}{\geq} \sum_{i=0}^{\min(m,n)}v^{2i}\bfN_{\theta(m-i,n-i)}\]
and it must be an equality by \eqref{newtheta}.

By $(A_{m',n'})$, for any $m',n'$ with $m'+n'<m+n$ we have
\begin{equation}
\label{step3eq1}\undH_{\theta(m',n')}=\bfN_{\theta(m',n')}+v^2\undH_{\theta(m'-1,n'-1)}.\end{equation}
By $(C_{m-1,n-2})$ we have
\begin{equation}\label{step3eq2}\undH_{\theta(m-1,n-2)}\undH_{st}=\undH_{\theta(m-1,n-1)}+\undH_{\theta(m-1,n-2)}+\undH_{\theta(m,n-3)}+\undH_{\theta(m-2,n-2)}.
\end{equation}
If we rewrite $(C_{m,n-1})$ (remark that $s$ and $t$ in that equation agree with those in equation \eqref{step3eq2}) by plugging equation \eqref{step3eq1} in every term, and then using equation \eqref{step3eq2} to simplify, we obtain

%
\begin{equation}\label{nnnn} \undH_{\theta(m,n)}= v^2 \undH_{\theta(m-1,n-1)}+ \left(\bfN_{\theta(m,n-1)} \undH_{st}-\bfN_{\theta(m,n-1)}-\bfN_{\theta(m+1,n-2)}-\bfN_{\theta(m-1,n-1)}\right).\end{equation}

We claim that for any $i>0$ the coefficient of the term $v^{2i}\bfH_{\theta(m-i,n-i)}$ in term in parenthesis in \eqref{nnnn} is zero. This can be checked directly via an easy local computation if $i=1$ or $i=2$.

Recall that $\bfH_x\undH_s=\bfH_{xs}+v\bfH_{x}$ if $xs>x$ and $\bfH_{xs}+v^{-1}\bfH_{x}$ if $xs<x$. It follows that in $\bfN_x\undH_{st}$ only terms $v^i\bfH_y$ with $i\geq \ell(x)-\ell(y)-2$ can occur.
In particular, if $i>2$
the term $v^{2i}\bfH_{\theta(m-i,n-i)}$ cannot occur in $\bfN_{\theta(m,n-1)} \undH_{st}$. By length reasons, $v^{2i}\bfH_{\theta(m-i,n-i)}$ can neither occur in the other terms in the parenthesis on the right hand side of the equation \eqref{nnnn}. 

It follows that for any $i>0$ the coefficient of the term $v^{2i}\bfH_{\theta(m-i,n-i)}$ in $\undH_{\theta(m,n)}$ coincides with the coefficient of $v^{2i-2}\bfH_{\theta(m-i,n-i)}$ in $\undH_{\theta(m-1,n-1)}$, which is $1$ by $A_{m-1,n-1}$.

\vspace{0.1cm}
	
	\emph{\textbf{Step 2}: Proof of $(B_{m,n})$.}
	
	\vspace{0.1cm}
	
		 From $(A_{m,n})$ it follows that $\mu(y,\theta(m,n))=0$ unless $y\lessdot\theta(m,n)$. Using \Cref{lessdot}, however, it is easy to check that for all these $y$ we have $ys>y$ and $ry>y$.
		This immediately implies that $\undH_{\theta(m,n)}\undH_s=\undH_{\theta(m,n)s}$ and $\undH_{r}\undH_{\theta(m,n)}=\undH_{r\theta(m,n)}$.
		
		Consider now $\undH_{r}\undH_{\theta(m,n)}\undH_s$. Observe that for $y\lessdot \theta(m,n)$ we have $\ell(rys)=\ell(y)+2$ (cf. \Cref{lessdot}). This implies that the basis element $\undH_y$ (with $y<r\theta(m,n)s$) is a summand in $\undH_{r}\undH_{\theta(m,n)}\undH_s$ if and only if $v^2$ occurs in $h_{y,\theta(m,n)}(v)$ and $\ell(rys)=\ell(y)-2$. But $v^2$ can occur only if $y=\theta(m-1,n-1)$ or $\ell(y)=\ell(\theta(m,n))-2$. However, one can see (using \Cref{lessdot}) that in all these cases the condition $\ell(rys)=\ell(y)-2$ is never satisfied.
	
	\vspace{0.1cm}
	\emph{\textbf{Step 3}: Proof of $(C_{m,n})$. }
	\vspace{0.1cm}

	Since by $(B_{m,n})$ we have $\undH_{\theta(m,n)}\undH_s=\undH_{\theta(m,n)s}$, an element $y$ is such that $\undH_y$ is a summand of $\undH_{\theta(m,n)}\undH_s\undH_t$ if and only if it satisfies one of the following conditions:	
	\begin{enumerate}
		\item $y=\theta(m,n+1)$ or $y=\theta(m,n)$ (this last one because $\theta(m,n)t<\theta(m,n).)$
		\item $v$ occurs in $h_{ys,\theta(m,n)}(v)$, and $yt<y>ys$.
		\item $v^2$ occurs in $h_{y,\theta(m,n)}(v)$, and $yst<ys<y$.
	\end{enumerate}

Assume $y$ is as in the second case. Then by $(A_{m,n})$ we have $ys \lessdot \theta(m,n)$. By \Cref{lessdot} and direct inspection we see that the only possibility is $y=\theta(m+1,n-1)$.

Assume $y$ is as in the third case. By $(A_{m,n})$, the term $v^2$ can only occur if \newline $y=\theta(m-1,n-1)$ or $\ell(\theta(m,n))-\ell(y)=2$. But $\theta(m-1,n-1)s>\theta(m-1,n-1)$, so this case does not count.

 Consider the case $\ell(\theta(m,n))-\ell(y)=2$. We can obtain a reduced expression $\underline{y}$ of $y$ by removing two simple reflections in the reduced expression $123\ldots stu=\undtheta(m,n)$. If we do not remove any of the last two simple reflections, then no new braid triple is added when multiplying $\underline{y}$
 by $st$ on the right. This means that $\underline{y}st$ is reduced and $\ell(yst)=\ell(y)+2.$ So at least one of the last two simple reflections must be removed from $\undtheta(m,n)$.
 
 Assume we remove the last $t$ but not the last $u$. Then $yu<y$. If $ys<y$ there exists a reduced expression of $ys$ with the two last letters being $su$, and so by \Cref{stu} we have $yst>ys$, so this case does not occur. 
 
 Hence, we must remove the last $u$. By \Cref{lessdot}, there are at most six different $y\lessdot \theta(m,n)u$ and by inspection one can check that the only one satisfying $y>ys>yst$ is $y=\theta(m-1,n)$.
\end{proof}

We conclude this section by computing which non-perverse summands occur when one multiplies $\undH_{x_n}$ with three simple reflections. This result will be needed in \Cref{illonthewall}.

\begin{definition}
	We say that two elements $h_1,h_2\in \calH$ are \emph{equal up to perverse elements} if there exist $p_1,p_2\in \calH$ perverse such that $h_1+p_1=h_2+p_2$.
\end{definition}

\begin{lemma}\label{notperverse} 
The following equality holds up to perverse elements.

\[ \undH_{x_n} \undH_{s_{n+2}}\undH_{s_{n+1}}\undH_{s_{n+2}}=\begin{cases}
(v+v^{-1}) \undH_{y_{n+1}} + (v+v^{-1}) \undH_{z_{n+1}} & \text{ if $n$ odd}\\
(v+v^{-1}) \undH_{y_{n+1}} & \text{ if $n$ even}
\end{cases}\]
\end{lemma}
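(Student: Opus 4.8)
The goal is to identify the non-perverse part of $\undH_{x_n}\undH_{s_{n+2}}\undH_{s_{n+1}}\undH_{s_{n+2}}$. First I would set $s=s_{n+2}$, $t=s_{n+1}$ and rewrite the middle portion: since $m_{s,t}=3$ we have $\undH_s\undH_t\undH_s=\undH_{sts}+\undH_s$, so that
\[
\undH_{x_n}\undH_s\undH_t\undH_s=\undH_{x_n}\undH_{sts}+\undH_{x_n}\undH_s.
\]
The second summand $\undH_{x_n}\undH_s$ is perverse (it is $\undH_{x_{n+1}}$ plus lower KL basis elements by \eqref{muinduction}, using $(B_n)$ of \Cref{Heckeonthewall}), so it contributes nothing up to perverse elements, and the whole problem reduces to computing the non-perverse part of $\undH_{x_n}\undH_{sts}$. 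Note $x_n s t s$ is beyond the wall: it is $\sim\theta$-type after the braid, but more usefully, $x_n\cdot s t s=\theta(\cdot,\cdot)$ only when $x_n$ already ends appropriately, so the cleaner route is to compute $\undH_{x_n}\undH_s$, then $(\undH_{x_n}\undH_s)\undH_t$, then $(\cdots)\undH_s$ step by step using \eqref{muinduction} and the explicit KL bases from \Cref{Heckeonthewall} and \Cref{out}.

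\textbf{Key steps.} Step 1: expand $\undH_{x_n}\undH_s$ via $(B_n)$: for $n$ odd this is $\undH_{x_{n+1}}+\undH_{y_n}$, for $n$ even it is $\undH_{x_{n+1}}+\undH_{y_n}+\undH_{z_n}$. Step 2: multiply each term by $\undH_t$. Here I use \eqref{muinduction} for each basis element, which requires knowing, for each $w\in\{x_{n+1},y_n,z_n\}$, whether $wt>w$ and what the $\mu$-coefficients $\mu(y,w)$ with $yt<y$ are; these follow from the explicit formulas for $\undH_{x_{n+1}}$ (\Cref{Heckeonthewall}), and for $\undH_{y_n}$, $\undH_{z_n}$ from the observation (made just before \Cref{Heckeonthewall}) that $y_{2n}=\theta(n-2,0)$, $y_{2n+1}\sim s\theta(n-2,0)$, $z_{2n}^{-1}\sim\theta(n-3,0)$, so their KL bases are governed by \Cref{A_0} and \Cref{out}. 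Step 3: multiply the result by $\undH_s$ again, using \eqref{muinduction} once more, and collect which summands $\undH_w$ appear. The non-perverse contributions arise only from terms of the form $\undH_w\undH_s$ with $ws<w$, which equal $(v+v^{-1})\undH_w$ (non-perverse), whereas terms with $ws>w$ stay perverse. So the bookkeeping is: after Steps 1--2, which basis elements $\undH_w$ appear such that $ws<w$? Those, multiplied by $\undH_s$, give $(v+v^{-1})\undH_w$. I expect exactly $w=y_{n+1}$ (always) and $w=z_{n+1}$ (only when $n$ is odd — when $n$ is even $z_{n+1}$ is not even well-defined, consistent with the footnote), matching the claimed formula.

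\textbf{Main obstacle.} The delicate point is Step 2--3: controlling precisely which $\undH_w$ with $w s<w$ appear, and with coefficient exactly $1$, in $\undH_{x_n}\undH_s\undH_t$. This is a pure $\mu$-coefficient computation, but it is easy to either miss a term or double-count. The cleanest way to control it is a \emph{content} (or \emph{rank}) check of the type used repeatedly in \Cref{Heckeonthewall} and \Cref{out}: the element $\undH_{x_n}\undH_s\undH_t\undH_s$ has content $8\,|x_n|$ (since $c(X\undH_s)=2c(X)$), the perverse part has a content one can compute from the known KL bases, and the difference must equal $(v+v^{-1})$ times the content of the conjectured right-hand side, i.e. $2(|y_{n+1}|+|z_{n+1}|)$ or $2|y_{n+1}|$ according to parity. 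Combined with the $\geqH$-monotonicity bound \eqref{Ngeq} and \Cref{monothm} (to see that $\undH_{y_{n+1}}$ and, in the odd case, $\undH_{z_{n+1}}$ genuinely occur as summands with the stated $(v+v^{-1})$ coefficient in the non-perverse part), the content equality then forces the expression to be exactly the claimed one, with no room for further non-perverse terms. So the structure is: produce the lower bound by exhibiting the two summands explicitly, produce the upper bound by content-counting, and conclude by the standard ``$\geqH$ plus equal content $\Rightarrow$ equal'' principle recorded in Section~1.
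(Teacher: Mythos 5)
Your opening move is the right one, and you even identify the key reduction --- using $\undH_s\undH_t\undH_s=\undH_{sts}+\undH_s$ and that $\undH_{x_n}\undH_s$ is perverse, the problem becomes computing the non-perverse part of $\undH_{x_n}\undH_{sts}$. But you then abandon this and go back to multiplying in the original order $s_{n+2},s_{n+1},s_{n+2}$, and that is where the argument breaks. In Step~1 you claim $\undH_{x_n}\undH_s$ (with $s=s_{n+2}$) is given by $(B_n)$ of \Cref{Heckeonthewall}; but $(B_n)$ is stated for $s=s_{n+1}$, the reflection extending $x_n$ to $x_{n+1}$, not for $s_{n+2}$. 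The element $x_ns_{n+2}$ is not $x_{n+1}$ (since $s_{n+2}=s_{n-1}$, the expression $12\cdots n(n-1)$ has a braid triplet and is already beyond the wall), and the correct expansion of $\undH_{x_n}\undH_{s_{n+2}}$ looks quite different from the $(B_n)$ right-hand side --- for $n$ even, for example, it is $\undH_{x_ns_{n+2}}+\undH_{z_{n+2}}+\undH_{x_{n-1}}$, not $\undH_{x_{n+1}}+\undH_{y_n}+\undH_{z_n}$. So the bookkeeping in Steps~2--3 starts from a wrong list of summands.

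The missing idea is the symmetric use of the braid relation: one also has $\undH_t\undH_s\undH_t=\undH_{sts}+\undH_t$, and since $\undH_{x_n}\undH_t$ is likewise perverse (because $x_ns_{n+1}>x_n$), one gets that $\undH_{x_n}\undH_{s_{n+2}}\undH_{s_{n+1}}\undH_{s_{n+2}}$ and $\undH_{x_n}\undH_{s_{n+1}}\undH_{s_{n+2}}\undH_{s_{n+1}}$ agree up to perverse elements. This is exactly what the paper does, and it is what makes the step-by-step computation feasible, because now the \emph{first} multiplication is by $\undH_{s_{n+1}}$, for which $(B_n)$ applies literally; the second multiplication by $\undH_{s_{n+2}}$ is handled via $(B_{n+1})$ for the $\undH_{x_{n+1}}$ term and via \Cref{out} for the $\undH_{y_n}$, $\undH_{z_n}$ terms (these are (conjugate to) $\theta$-type elements); and then the last multiplication by $\undH_{s_{n+1}}$ picks out $(v+v^{-1})\undH_{y_{n+1}}$ (and $\undH_{z_{n+1}}$ in the odd case) as the only non-perverse contributions. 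Your ``content-counting'' idea is a legitimate sanity check and could in principle be woven into an alternative proof, but on its own it does not repair Step~1: once the first expansion is wrong, the content of the remaining expression is not what you compute it to be.
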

\begin{proof}
Recall that we have \[\undH_{s_{n+2}}\undH_{s_{n+1}}\undH_{s_{n+2}}=\undH_{s_{n+2}s_{n+1}s_{n+2}}+\undH_{s_{n+2}}.\] Since $x_ns_{n+2}>x_n$, the term $\undH_{x_n}\undH_{s_{n+2}}$ is perverse.

Similarly, we have $\undH_{s_{n+1}}\undH_{s_{n+2}}\undH_{s_{n+1}}=\undH_{s_{n+2}s_{n+1}s_{n+2}}+\undH_{s_{n+1}}$ and $\undH_{x_n}\undH_{s_{n+1}}$ is also perverse. Hence, up to perverse elements, we have
\[\undH_{x_n} \undH_{s_{n+2}}\undH_{s_{n+1}}\undH_{s_{n+2}}=\undH_{x_n} \undH_{s_{n+1}}\undH_{s_{n+2}}\undH_{s_{n+1}}.\]

We can assume $n$ even, the case $n$ being odd is similar.
Now we can apply \Cref{Heckeonthewall} twice and get
\[\undH_{x_n} \undH_{s_{n+1}}\undH_{s_{n+2}}=\undH_{x_{n+2}}+\undH_{y_{n+1}}+\undH_{z_{n}}\undH_{s_{n+2}}+\undH_{y_n}\undH_{s_{n+2}}.\]
Moreover, by \Cref{out} (more precisely, statements $B_{0,\frac{n-6}{2}}$ and $B_{\frac{n-4}{2},0}$), we have $\undH_{z_{n}}\undH_{s_{n+2}}=\undH_{z_n s_{n+2}}$ and $\undH_{y_n}\undH_{s_{n+2}}=\undH_{y_ns_{n+2}}$. 

The result follows since $y_{n+1}s_{n+1}<y_{n+1}$ while $x_{n+2}s_{n+1}>x_{n+2}$, $z_n s_{n+2}s_{n+1}>z_n s_{n+2}$ and $y_ns_{n+2}s_{n+1}>y_n s_{n+2}$.
\end{proof}

\section{Projectors (\texorpdfstring{\Cref{Thm2}}{Theorem 2})}\label{S3}

\subsection{Diagrammatic Hecke category}

Let $\calH$ be the diagrammatic Hecke category for the Weyl group of type $\tilde{A}_2$ as defined in \cite{EW2} with respect to the Cartan matrix realization over the rational numbers. To be more precise, we consider the three-dimensional realization $\mathfrak{h}=\bigoplus_{s\in S}\alpha_s^{\vee}$ of $(W,S)$ over $\mathbb{Q}$, 
with $\{\alpha_s\, \vert \, s\in S\}\subset \mathfrak{h}^{*}=\Hom_{\mathbb{Q}}(\mathfrak{h}, \mathbb{Q})$ defined by $\langle \alpha_s^{\vee}, \alpha_s\rangle=2$ and $$\langle \alpha_s^{\vee}, \alpha_r\rangle=-1 \ \ \mathrm{if}\ s\neq r.$$
Recall that $R:=S(\mathfrak{h}^{*})$ is the symmetric algebra on $\mathfrak{h}^*$, which we view as a graded $\mathbb{Q}$-algebra with $\mathrm{deg}(\mathfrak{h}^*)=2.$ Denote by $(1)$ the grading shift. For $M,N\in \calH$ let $\Hom^i(M,N)$ denote the degree $i$ morphisms from $M$ to $N$. In other words, $\Hom^i(M,N):=\Hom^0(M,N(i)).$ We call $\Hom$ the set of morphisms of all degrees, i.e., $$\Hom(M,N):=\bigoplus_{i\in \mathbb{Z}}\Hom^i(M,N).$$
For $x\in W$ let $B_x$ be the indecomposable object in $\calH$ corresponding to $x$ and $\cha:\calH \raw H$ be the character map.
 Recall that Soergel's conjecture \cite{EW1} holds for this realization, so we have $\cha(B_x)=\undH_x$.

Let $B,B'\in \calH$.
For $x\in W$ we denote by $\Hom_{<x}(B,B')\subset \Hom(B,B')$ the vector space generated by all morphisms $f:B\raw B'$ that factor through $B_y(n)$ for some $y<x$ and $n\in \bbZ$. Let $\Hom_{\not <x}(B,B'):=\Hom(B,B')/\Hom_{<x}(B,B')$. We denote by $\calH_{\not <x}$ the category whose objects are as in $\calH$ and for any $B,B'\in \calH_{\not <x} $ we have $\Hom_{\calH_{\not <x}}(B,B'):= \Hom_{\not <x}(B,B')$. 

By the Soergel's Hom formula (\cite[\S 6.7]{EW2}), the space $\Hom_{\not <x}(B_x,B)$ is a free graded $R$-module with graded rank given by the coefficient of $\bfH_x$ in $\cha(B)$.
Notice that for any reduced expression $\undx$ of $x$ we have a canonical isomorphism \[\Hom_{\not <x}(B_x,B)\cong \Hom_{\not <x}(BS(\undx),B).\]


Let $x,y\in W$ with $x<y$
and $\undx,\undy$ be reduced expressions for $x,y$.
There is a unique light leaf morphism \cite[Lemma 5.1]{Li3} from $BS(\undy)$ to $BS(\undx)$ of degree $\ell(y)-\ell(x)$, denoted by $G_y^x$. The following is a categorical version of the monotonicity conjecture.

\begin{prop}[{\cite[Prop 5.7]{Pla}}]\label{mono}
	Assume $x<y<z$. Then post-composing with $G_x^y$ induces an injective morphism of left $R$-modules
	\[\Hom_{\not <y}(BS(\undz),BS(\undy))\otimes_R \bbQ \raw \Hom_{\not <x}(BS(\undz),BS(\undx))\otimes_R \bbQ.\]
\end{prop}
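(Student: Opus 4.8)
The statement to prove is Proposition~\ref{mono}, which is cited from \cite{Pla}. Since the paper attributes it to an external reference, the ``proof'' here should be a pointer to that reference together with an explanation of how the categorical monotonicity statement relates to the numerical monotonicity already recorded in \Cref{monothm}. Let me sketch what a self-contained argument would look like.

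\medskip

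\textbf{Plan.} The plan is to deduce the statement from the categorical diagonalization / perverse filtration machinery for the Hecke category, exactly as in \cite[Prop 5.7]{Pla}. First I would recall that for any reduced expression $\undw$ of $w$, the Bott--Samelson object $BS(\undw)$ carries a canonical filtration (the \emph{standard} or $\Delta$-filtration) whose subquotients are shifts of standard objects $\Delta_u$ indexed by subexpressions of $\undw$; dually there is a $\nabla$-filtration. The space $\Hom_{\not< y}(BS(\undz), BS(\undy))\otimes_R\bbQ$ can be identified, via the localization/standard-module formalism, with a space controlled by the coefficient of $\bfH_y$ in $\cha(BS(\undz))$ paired against the corresponding piece of $\cha(BS(\undy))$; concretely, Soergel's Hom formula (\cite[\S6.7]{EW2}, quoted above) gives that $\Hom_{\not<y}(B_y, B)$ is free over $R$ with graded rank the $\bfH_y$-coefficient of $\cha(B)$, and one reduces the Bott--Samelson statement to the indecomposable statement using that $BS(\undy)\cong B_y\oplus(\text{lower terms})$ and $BS(\undz)$ likewise.

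\medskip

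\textbf{Key steps.} (1) Reduce to indecomposables: replace $BS(\undz)$ by $B_z$ and $BS(\undy)$ by $B_y$, $BS(\undx)$ by $B_x$, using that the extra summands of the Bott--Samelsons are supported on strictly smaller elements and hence die in the relevant quotient categories $\calH_{\not<y}$ and $\calH_{\not<x}$; the light leaf maps $G_x^y$ correspond under this reduction to the unique-up-to-scalar degree $\ell(y)-\ell(x)$ maps $B_y\to B_x$ and $B_z\to B_y$. (2) Apply the functor $\Hom_{\not<x}(B_z,-)\otimes_R\bbQ$ and observe that post-composition with $G_x^y\colon B_y\to B_x$ gives the map in question. (3) Prove injectivity: this is the heart, and it follows from the fact that $G_x^y$ is, up to lower-order terms, a split injection of $B_x$ into the $\nabla$-filtration of $B_y$ (equivalently, the standard object $\Delta_x$ appears in the $\Delta$-flag of $B_y$ with multiplicity recording $h_{x,y}(v)$, and the light leaf realizes the corresponding inclusion), so composing with it cannot kill a nonzero morphism once we have quotiented by everything supported below $x$. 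Numerically, injectivity is \emph{consistent} with \Cref{monothm}: the graded rank of the source is the $v^{\ell(y)-\ell(x)}$-shifted $\bfH_y$-coefficient of $\cha(B_z)$, i.e. $h_{y,z}(v)$ up to shift, the target has rank $h_{x,z}(v)$, and \Cref{monothm} says precisely $h_{y,z}(v)-v^{\ell(y)-\ell(x)}h_{x,z}(v)\in\bbZ_{\geq0}[v]$, which is the coefficient-wise inequality needed for an injection of graded free modules to exist.

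\medskip

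\textbf{Main obstacle.} The hard part is Step (3): promoting the numerical monotonicity of \Cref{monothm} to an actual \emph{injectivity} of the composition map, rather than just the existence of some injection of the right ranks. This requires knowing that the light leaf $G_x^y$ behaves well with respect to the standard filtrations --- precisely, that it is a ``$\nabla$-split'' or ``perverse-filtration-compatible'' morphism, which is the content of the perverse filtration results of \cite{BM,Pla} and ultimately rests on the Hodge-theoretic positivity underlying Soergel's conjecture for the realization at hand (valid here by \cite{EW1}). Given those inputs the argument is formal; since all of this is carried out in \cite[Prop 5.7]{Pla}, I would simply cite it, remarking that the realization considered in \Cref{S3} satisfies the hypotheses needed there (it is a reflection-faithful, or at least Soergel-conjecture-satisfying, realization over $\bbQ$, as noted above).
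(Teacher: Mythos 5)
The paper offers no proof of this statement --- it is cited verbatim from \cite[Prop 5.7]{Pla} --- and you correctly recognize this and cite the same source, so your approach matches the paper's. Two small caveats on your supplementary sketch: $BS(\undz)$ cannot simply be replaced by $B_z$ in $\calH_{\not<y}$, since its summands $B_w$ with $y\le w<z$ survive in that quotient (only the replacements of $BS(\undy)$ by $B_y$ and $BS(\undx)$ by $B_x$ are clean), so the reduction to indecomposables must handle each surviving $B_w$ separately; and Plaza's injectivity argument is carried out in an abstract cellular framework built around the light-leaves basis rather than by a direct Hodge-theoretic appeal to \cite{EW1}, which enters instead when one identifies the graded ranks of these Hom spaces with Kazhdan--Lusztig polynomials and thereby obtains \Cref{monothm} as a corollary.
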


\subsection{Induction of light leaves bases}\label{induction}

For general Hecke categories (i.e., for any realization satisfying Soergel's categorification theorem) the indecomposable object $B_x$ is a representative of an equivalence class of isomorphic objects, where
each isomorphism is not canonical. In our setting (where Soergel's
conjecture is available) we have $\End^0(B_x)=\bbQ$, so $B_x$ is a representative of an equivalence
class of isomorphic objects where each isomorphism is canonical up
to an invertible scalar. One can even fix this scalar \cite[Section 3.1]{LW2} so that all the isomorphisms are canonical.

As $\mathcal{H}$ is the Karoubian envelope of the Bott-Samelson Hecke category $\mathcal{H}_{\mathrm{BS}}$, any indecomposable object of $\mathcal{H}$ is (up to shifts) a pair $(BS(\undw), e)$ for $\undw$ an expression and $e$ an idempotent in $\mathrm{End}(BS(\undw))$. Any morphism $f:(B,e)\rightarrow (B',e')$ is, by definition, a map $f: B \rightarrow B'$ satisfying $f=e'\circ f\circ e$.

By the previous discussion, when we write $B_x$ we will mean some pair $(BS(\undx), e_{\undx})$ for $\undx$ a reduced expression of $x$ and $e_{\undx}$ an idempotent in $\mathrm{End}(BS(\undx))$ expressing $B_x$ (we will chose for $e_{\undx}$ the favorite projector defined in \cite[Section 4.1]{Li3}). If the reduced expression $\undx$ is not clear from the context, one may choose any. So a map $B_x\rightarrow B_y$ will be a map $f: BS(\undx)\rightarrow BS(\undy)$ for some reduced expressions $\undx$, $\undy$ of $x$ and $y$ satisfying $f=e_{\undy}\circ f\circ e_{\undx}.$ 

Let $y\leq x$ and $ x\leq xs$. 

\begin{itemize}
\item If $y<ys$, we define $(-)^{U0}:\Hom_{\not<y}(B_x,B_y)\raw \Hom_{\not<y}(B_xB_s,B_y)$
by 
\begin{center}
\tikzset{every picture/.style={line width=2.2pt}}
\begin{tikzpicture}
\draw
(0,0) rectangle ++(2,1);
\node at (1,0.5) {$f$};
\node at (2.5,0.5) {$\mapsto$};
\draw (3,0) rectangle ++(2,1);
\draw[Mred] (5.5,0) edge ++(0,0.5);
\node at (4,0.5) {$f$};
\node[Mred,circle,fill,draw,inner sep=0mm,minimum size=1mm] at (5.5,0.5) {};
\end{tikzpicture}
\end{center}
and $(-)^{U1}:\Hom_{\not<y}(B_x,B_y)\raw \Hom_{\not<ys}(B_{x}B_s,B_{ys})$
by 
\begin{center}
	\begin{tikzpicture}
	\draw
	(0,0) rectangle ++(2,1);
	\node at (1,0.5) {$f$};
	\node at (2.5,0.5) {$\mapsto$};
	\draw (3,0) rectangle ++(2,1);
	\draw[Mred] (5.5,0) edge ++(0,1);
	\node at (4,0.5) {$f$};
	\end{tikzpicture}
\end{center}
Notice that $f^{U1}$ is well-defined since $B_yB_s$ is canonically isomorphic to $ B_{ys}$ in $\calH_{\not <ys}$.

\item If $ys<y$, then we define $(-)^{D0}:\Hom_{\not<y}(B_x,B_y)\raw \Hom_{\not<y}(B_xB_s,B_y)$
by 
\begin{center}
	\begin{tikzpicture}
	\draw
	(0,0) rectangle ++(2,1);
	\node at (1,0.5) {$f$};
	\node at (2.5,0.5) {$\mapsto$};
	\draw (3,0) rectangle ++(2,1);
	
	\node at (4,0.5) {$f$};
	\draw (3.2,1) edge ++(0,0.3)
		(3.5,1) edge ++(0,0.3)
		(4.8,1) edge ++(0,0.3);
	\draw[Mred] 	(3.8,1) edge ++(0,0.3)
		(5.5,0) edge ++(0,1.8) 
		(5.5,1.8) edge [out=90, in=0] (5.1,2.2)
		(5.1,2.2) edge [out=180, in=90] (4.8,1.8) edge ++(0,0.3); 
	\draw (3,1.3) rectangle ++(2,0.5);
	\node at (4,1.5) {braid};
	\node at (4.3,1.1) {$\cdots$};
	\node at (4,2.2) {$\cdots$};
	\draw (3.2,1.8) edge ++(0,0.7)
	(3.5,1.8) edge ++(0,0.7)
	(4.5,1.8) edge ++(0,0.7);
	\end{tikzpicture}
\end{center}
\end{itemize}
The rectangle labeled ``braid'' is just a sequence of six-valent vertices corresponding to a sequence of braid moves taking the red strand to the right (this sequence always exists). Any braid morphism $BS(\undy)\rightarrow BS(\undy')$ induces the canonical isomorphism $BS(\undy)\cong BS(\undy')$ in $\calH_{\not<y}$. Observe that, since $ys<y$, tensoring on the right with $\Iden_{B_s}$ we also obtain a canonical isomorphism $BS(\undy)B_s\cong BS(\undy')B_s$ in $\calH_{\not<y}$, hence the morphism $f^{D0}$ does not depend on the braid move chosen.

 Similarly, we define
$(-)^{D1}:\Hom_{\not<y}(B_x,B_y)\raw \Hom_{\not<ys}(B_{x}B_s,B_{ys})$
by 
\begin{center}
	\begin{tikzpicture}
\draw
(0,0) rectangle ++(2,1);
\node at (1,0.5) {$f$};
\node at (2.5,0.5) {$\mapsto$};
\draw (3,0) rectangle ++(2,1);

\node at (4,0.5) {$f$};
\draw (3.2,1) edge ++(0,0.3)
(3.5,1) edge ++(0,0.3)
(4.8,1) edge ++(0,0.3);
\draw[Mred] 	(3.8,1) edge ++(0,0.3)
(5.5,0) edge ++(0,1.8) 
(5.5,1.8) edge [out=90, in=0] (5.1,2.2)
(5.1,2.2) edge [out=180, in=90] (4.8,1.8); 
\draw (3,1.3) rectangle ++(2,0.5);
\node at (4,1.5) {braid};
\node at (4.3,1.1) {$\cdots$};
\node at (4,2.2) {$\cdots$};
\draw (3.2,1.8) edge ++(0,0.7)
(3.5,1.8) edge ++(0,0.7)
(4.5,1.8) edge ++(0,0.7);
\end{tikzpicture}
\end{center}

The following lemma was first observed (in a slightly different form) by the first author and G. Williamson in \cite{LW2}.

\begin{lemma}\label{inducinglightleaves}
Let $x,y\in W$	Assume $y \leq x<xs$. Then
	\begin{enumerate}[i)]
		\item if $y<ys$ then
			\[\Hom_{\not <y} (B_x,B_y)(-1) \oplus \Hom_{\not <ys} (B_{x},B_{ys}) \cong \Hom_{\not <y}(B_xB_s,B_y) \]
			and the isomorphism is given by 
			$(f,g)\mapsto f^{U0}+g^{D1}$.
		\item if $ys <y$ then
		\[\Hom_{\not <y} (B_x,B_y)(1) \oplus \Hom_{\not <ys} (B_{x},B_{ys})\cong \Hom_{\not <y} (B_xB_s,B_y) \]
		and the isomorphism is given by 
		$(f,g)\mapsto f^{D0}+g^{U1}$
	\end{enumerate} 
\end{lemma}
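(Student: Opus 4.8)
\textbf{Proof plan for \Cref{inducinglightleaves}.}

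The plan is to prove both statements simultaneously by exhibiting the displayed maps as inverse to a natural decomposition coming from the standard ``one-color'' adjunction in the diagrammatic Hecke category. Concretely, tensoring on the right with $B_s$ and using the fact that $B_xB_s$ is built from $B_x$ and $B_s$, one has short exact sequences of $R$-modules relating $\Hom(B_xB_s,B_y)$ to $\Hom(B_x,B_y)$ and $\Hom(B_x,B_yB_s)$; after passing to the quotient categories $\calH_{\not<y}$ and $\calH_{\not<ys}$ and using that $B_yB_s \cong B_{ys}\oplus B_y(1)\oplus B_y(-1)$ when $ys<y$ (respectively $B_yB_s$ is indecomposable $\cong B_{ys}$ modulo lower terms when $y<ys$), these become the claimed direct sum decompositions. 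The first step is therefore to set up the adjunction: the cup/cap morphisms $B_s \to B_s B_s(1)$ and $B_sB_s(-1)\to B_s$, together with the unit $R\to B_s(1)$ and counit $B_s(-1)\to R$, give, for any $B,B'$, an isomorphism $\Hom(BB_s,B') \cong \Hom(B,B'B_s)$ and a decomposition of $B_sB_s$ into $B_s(1)\oplus B_s(-1)$.

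Next I would compute the right-hand side in the quotient category. Recall from \eqref{muinduction} that $\cha(B_yB_s) = \undH_{ys} + \sum \mu(z,y)\undH_z$ when $ys<y$ it is $\undH_{ys}+\undH_y$ plus terms $\undH_z$ with $z<y$; the point is that in $\calH_{\not<y}$, i.e.\ modulo all $B_z$ with $z<y$, the object $B_yB_s$ becomes $B_{ys}\oplus B_y(1)\oplus B_y(-1)$ in case $ys<y$, and becomes just $B_{ys}$ (which equals $B_yB_s$ in $\calH_{\not<ys}$, and contains $B_y$ as the only other ``large'' summand shifted, but actually in $\calH_{\not <y}$ it is indecomposable) in case $y<ys$. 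Applying $\Hom_{\not<y}(B_xB_s,-)$ and the adjunction $\Hom_{\not<y}(B_xB_s,B_y) \cong \Hom_{\not<y}(B_x, B_yB_s)$, the decomposition of $B_yB_s$ in the appropriate quotient category yields precisely $\Hom_{\not<y}(B_x,B_y)(\pm1)\oplus \Hom_{\not<ys}(B_x,B_{ys})$, with the degree shift $-1$ in case (i) and $+1$ in case (ii) dictated by which shift of $B_y$ appears ``above'' $B_{ys}$ in the Bruhat order, or equivalently by the sign of $\ell(ys)-\ell(y)$. The compatibility of the Soergel Hom formula with the quotient (recalled in the excerpt just before the lemma) guarantees that all the $\Hom$ spaces involved are free graded $R$-modules of the predicted graded rank, so checking the isomorphism amounts to checking it is a well-defined map of the right source and target that is surjective (hence an isomorphism by rank count).

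The final step is to identify this abstract isomorphism with the explicit formulas $(f,g)\mapsto f^{U0}+g^{D1}$ and $(f,g)\mapsto f^{D0}+g^{U1}$. This is a direct diagrammatic computation: composing $f\colon B_x\to B_y$ with the ``add a dot'' map $B_xB_s \to B_x \to B_y$ realizes the component landing in $B_y$ through the counit/unit $R\to B_s(1)$, which is exactly $f^{U0}$ (case (i), $y<ys$, so adding a dot lowers nothing) or $f^{D0}$ (case (ii), $ys<y$, where one must slide the strand past $\undy$ by a braid move — and one checks, as noted in the construction of $(-)^{D0}$, that the result is independent of the braid move chosen because $BS(\undy)B_s\cong BS(\undy')B_s$ canonically in $\calH_{\not<y}$); composing with the ``merge'' map realizes the other component and gives $g^{D1}$ or $g^{U1}$ respectively. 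I expect the main obstacle to be the bookkeeping in this last identification step, in particular verifying the degree shifts land correctly and that the two explicit maps $(-)^{U0},(-)^{D1}$ (resp.\ $(-)^{D0},(-)^{U1}$) are exactly the two matrix entries of the adjunction isomorphism composed with the projection onto the two summands of $B_yB_s$; once that is pinned down the statement follows. An alternative, perhaps cleaner, route avoiding the full adjunction machinery is to prove injectivity of $(f,g)\mapsto f^{U0}+g^{D1}$ directly (using \Cref{mono} together with the structure of light leaves) and then invoke the rank equality from Soergel's Hom formula to upgrade injectivity to an isomorphism; this is likely how the proof in the paper proceeds, and it sidesteps having to write down the cup/cap relations explicitly.
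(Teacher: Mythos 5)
Your main route (adjunction plus quotient categories) is genuinely different from the paper's, and it has a gap. The self-biadjunction $\Hom(B_xB_s,B_y)\cong\Hom(B_x,B_yB_s)$ holds in $\calH$, but it does \emph{not} descend to the quotients $\calH_{\not<y}$ in the way your plan needs. The adjoint of a morphism $B_xB_s\to B_z\to B_y$ (with $z<y$) factors through $B_zB_s$, not through $B_z$; and $B_zB_s$ can contain $B_y$ as a summand, so the adjoint need not be a ``lower term''. Concretely, in case (ii), take $z=ys<y$: then $(ys)s=y>ys$, so $B_{ys}B_s\cong B_y\oplus(\text{lower})$, and the adjoint of a map factoring through $B_{ys}$ hits the $B_y$-summand on the nose. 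You can also see the problem on graded ranks: $\Hom_{\not<y}(B_xB_s,B_y)$ has graded rank $v\,h_{y,x}+h_{ys,x}$, whereas $\Hom_{\not<y}(B_x,B_yB_s)$ computed via $B_yB_s\cong B_y(1)\oplus B_y(-1)$ (with $B_{ys}$ killed in $\calH_{\not<y}$ when $ys<y$) has rank $(v+v^{-1})h_{y,x}$; these generally disagree. Also note that in case (ii) you list $B_{ys}$ as a surviving summand of $B_yB_s$ in $\calH_{\not<y}$, but $ys<y$ means $B_{ys}$ is precisely one of the objects being quotiented out.

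Your ``alternative'' sketch at the end is the right idea and does match the paper's strategy — prove $R$-linear independence of $\{f_i^{U0},g_j^{D1}\}$ and invoke the rank equality from Soergel's Hom formula — but the tool is not \Cref{mono}. The paper instead localizes at the generic point: it tensors with the fraction field $Q$ of $R$, uses $Q\otimes B_s\cong Q_e\oplus Q_s$ to split $\Hom(Q\otimes B_xB_s,Q_y)$ into a ``$Q_e$-part'' and a ``$Q_s$-part'', and then runs a triangularity argument (the $f_i^{U0}$ restrict to zero on the $Q_s$-part since $Q_{ys}$ is not a summand of $Q\otimes B_y$), exactly as in the proof of \cite[Prop.~6.6]{EW2}. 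Since $Q$-linear independence after localization implies $R$-linear independence, and the graded ranks match, this gives the isomorphism. \Cref{mono} (which is about post-composition with a single $G^y_x$) does not obviously give you this injectivity.
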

\begin{proof}
	We prove only i), the proof of ii) being analogous. 
	The coefficient of $\bfH_y$ in $\undH_x\undH_s$ is $vh_{y,x}(v)+h_{ys,x}(v)$, hence by the Soergel's hom formula the left and the right hand side have the same graded rank.

	Let $Q$ be the fraction field of $R$. For $x\in W$ let $Q_x$ denote the corresponding standard module (cf. \cite[\S 5.4]{EW2}).
	Let $\{f_1,\ldots ,f_n\}$ and $\{g_1,\ldots, g_m\}$ be $R$-bases respectively of $\Hom_{\not <y} (B_x,B_y)$ and $\Hom_{\not <ys} (B_{x},B_{ys})$.
		After localization, they induce $Q$-bases of $\Hom(Q\otimes_R B_x,Q_y)$ and $\Hom(Q\otimes_R B_x,Q_{ys})$.

	We need to show that 
		$\{f_1^{U0},\ldots, f_n^{U0},g_1^{D1},\ldots, g_m^{D1}\}$ is an $R$-linearly independent subset of 
	$\Hom_{\not <y}(B_xB_s,B_y)$.
	It is enough to show that, after localization, this induces a $Q$-linearly independent set in $\Hom(Q\otimes_R B_xB_s,Q_y)$.	Recall that \[Q\otimes B_xB_s\cong (Q\otimes_R B_x)\otimes_Q(Q_e\oplus Q_s),\]
	so we have \begin{equation}\label{Qys}\Hom(Q\otimes_R B_xB_s,Q_y)\cong \bigoplus_{Q_y \cug Q\otimes_R B_x} \Hom(Q_yQ_{e},Q_y)\oplus \bigoplus_{Q_{ys} \cug Q_s\otimes_R B_x} \Hom(Q_{ys}Q_s,Q_y)\end{equation}

	One can conclude with a triangularity argument similar to the proof of \cite[Proposition 6.6]{EW2}: in fact the morphisms $f_i^{U0}$ restrict to zero on the second term of the RHS of \eqref{Qys} since $Q_{ys}$ is not a summand of $Q\otimes B_y$. 
\end{proof}

%
%
%
%
%

\subsection{The projectors on the wall}

We call \emph{retraction} the relation
\begin{center}



\end{center}
are resp. generators of the unidimensional vector spaces $\Hom^0 (B_{y_{n}},B_{x_{n}}B_{s_{n+1}})$
	 and \linebreak
	$\Hom^0(B_{z_{2n}},B_{x_{2n}}B_{s_{2n+1}})$. 
\end{lemma}
\begin{proof}
	Using \Cref{inducinglightleaves}, we know that $\Hom^0(B_{y_{n}},B_{x_{n}}B_{s_{n+1}})=\Hom^1(B_{y_n},B_{x_n})$ and the isomorphism is given by ``applying a D0''. The space $\Hom^1(B_{y_n},B_{x_n})$ is spanned by the unique light leaf of degree one with endpoint $y_n$ with only $U1$ and $U0$.
	
	The other case is similar. We have an isomorphism \[\Hom^1(B_{z_{2n}},B_{x_{2n}})\cong \Hom^0(B_{z_{2n}},B_{x_{2n}}B_{s_{2n+1}})\] given again by $f\mapsto f^{D0}$. The space $\Hom^1(B_{z_{2n}},B_{x_{2n}})$ is one-dimensional. We will show in \Cref{restatement} that a generator of this vector form is of the required form (i.e., it is the element in $\ILL(x_{2n},z_{2n})$ of type III).
\end{proof}

In the diagrammatic category, \Cref{Heckeonthewall}$(B_n)$ and \Cref{deg0onthewall} mean that we can write the first two formulas of \Cref{Thm2} for some numbers $c_n$ and $d_n$.




Our next goal is to compute the coefficients $c_{n}$ and $d_{n}$, that is we need to compute

\begin{equation}\label{intformfigure}

\end{equation}

\subsection{Computing \texorpdfstring{$c_n$}{cn}} 

In the recursive formula for $x_m$ (the projector on the wall) in \Cref{Thm2} we will use the following terminology. There are two equations (for $m$ odd or even). In any case, the term on the left hand side of the equation will be called the \emph{$x$-term}, the term involving an $y_{m-1}$ will be called the \emph{$y$-term} and the term involving $z_{m-1}$ will be called the \emph{$z$-term}.

We start from \eqref{intformfigure} and use the recursive formula for $x_n$ (depending on its parity). Since we are interested in the computation only up to terms smaller than $y_n$, by length reasons (since $\ell(x_{n})-\ell(z_{n-1})=4$) the $z$-term (if it appears) does not add.
Moreover, the $y$-term factors through $B_{y_{n-1}} B_{s_{n+1}}\cong B_{y_{n-1}}(1)\oplus B_{y_{n-1}}(-1)$. So this term also vanishes modulo lower terms. Thus, modulo lower terms only the $x$-term adds. We obtain:

\vspace{-3mm}
\begin{center}
\tikzset{every picture/.style={line width=1.5pt}}

\end{center}

\vspace{-3mm}

We can move the polynomial $\alpha_{n-1}$ to the right using twice the nil-Hecke relation (\cite[(5.2)]{EW2}). Notice that breaking the red line gives rise to an expression of length $n-1$ (recall that $\ell(y_n)=n-1$) that is not reduced, so it vanishes modulo lower terms. When we break the green line we get a $-2$ coefficient because $\partial_{n+1}(s_{n}(\alpha_{n-1}))=-2$. After retracting the two green strands, we get a map which is the identity up to lower terms. To see this one needs to write the projector for $x_{n-2}$ in the double leaves basis and see that by length reasons the only terms that survives is the identity. Vertical concatenation of six-valent vertices are the identity modulo lower terms. Summing up, this term is $-2\cdot \mathrm{Id}_{y_n}$ modulo lower terms.

 \item We now consider the $y$-term. It factors through $y_{n-2} s_n s_{n+1}= y_n$.
So this map is $-c_{n-1}$ times the composition of two endomorphisms of $B_{y_n}$, one being the flip of the other. So it suffices to compute only half of it, which is the following morphism:

\vspace{-0.7cm}
\begin{center}


\end{center}
\vspace{-0.3cm}
After composing with its flip we obtain the identity up to lower terms. So, adding up, this term adds $-c_{n-1}\cdot \mathrm{Id}_{y_n}$ modulo lower terms. 
\end{itemize}

 Therefore we obtain the following recursion:
$c_1=c_2=c_3=c_4=0$ and for $n\geq 4$ we have
\[ \frac{1}{c_{n+1}}=-2 - c_{n-1}.\]
This gives for $n\geq 2$
\[c_{2n+1}=c_{2n+2}=-\frac{n-1}{n}.\]

\subsection{Computing \texorpdfstring{$d_n$}{dn}} 

For later use, we need the following lemma.

\begin{lemma}\label{lambda}
	We have
	$\lambda_{2n}=-\Iden$ modulo lower terms,
	where $\lambda_{2n}\in \mathrm{End}({BS}(\underline{z}_{2n+2}))$ is the following morphism:
	\vspace{-0.7cm}
\begin{center} 


\end{center} 
	
\end{lemma}
\vspace{-1cm}
\begin{proof}
	We apply the recursion for $x_{2n}$ and we claim that only the $x$-term survives.
	In fact, the $z$-term is zero by length reasons. The $y$-term factors through $y_{2n-1} s_{2n-1}$ which is not reduced, so it also vanishes since $\ell(y_{2n-1})<\ell(z_{2n+2})$. We can then apply repeatedly the recursion formula and notice that all the $y$-terms factor through a word of the form 
	\[y^{(m)}:=1234\ldots (m-1)\widehat{m} (m+1) \widehat{(m+2)}(m+3) \ldots 2n (2n-1)\] 
	for $3\leq m\leq 2n-2$. Notice that either  $y^{(m)}$ is not reduced or $s_3$ is not in the left descent set of  $y^{(m)}$ (cf. \Cref{stu}). 
	But $s_3z_{2n+2}<z_{2n+2}$ and so $y^{(m)}\neq z_{2n+2}$.
	This means that by length reasons the $y$-terms, as well as the $z$-terms, always vanish.
	
	Hence, we can replace the idempotent for $x_{2n}$ with the identity. We can now apply the nil-Hecke relations on the first strand, and obtain $\lambda_{2n} =-\Iden$ mod l.t.
\end{proof}

We will compute the intersection form 

\vspace{-.5cm}
\begin{center}


\end{center} 

 to this situation, the second term does not appear because it factors through something too small. So one can retract the blue dot and pass through the six-valent vertices and at the end use the retraction relation. The map that we obtain is zero modulo lower terms. 
 Otherwise, $B_{z_{2n}}$ would be a summand of $B_{x_{2n-2}}B_{s_{2n-2}}\cong 
 B_{x_{2n-2}}(1)\oplus B_{x_{2n-2}}(-1)$.
 
\item Cutting the red line gives us a coefficient $-2 \lambda_{2n-2}$, which is equal to $2$ by \Cref{lambda}. This will give us the $2$ in \eqref{dn}.
\item The last one is zero by degree reasons (the picture would give a negative degree map, which is always zero modulo lower terms).
\end{itemize}

\item \textbf{$y$-term:} This term is zero because $B_{y_{2n-2}}B_{s_{2n}}B_{s_{2n+1}}\cong B_{\theta(n-4,1)}\oplus B_{\theta(n-4,0)} \oplus B_{\theta(n-5,0)}$ from \Cref{out} (recall that $y_{2n-2}=\theta(n-4,0)$). There cannot be morphisms of degree $0$ since $z_{2n}\not\in \{\theta(n-4,1), \theta(n-4,0),\theta(n-5,0)\}$.

\item\textbf{$z$-term} This term is a composition of a morphism and its flip, but in the middle one has $z_{2n-2}s_{2n}s_{2n+1}$, which is the same element as $z_{2n}$. Thus we can replace the idempotent on $z_{2n-2}$ by the identity in this calculation. Thus, this term adds $-d_{2n-1}\beta^2$ , where $\beta$ is the coefficient of the identity of the following morphism modulo lower terms. 

\vspace{-1cm}
\begin{center}


\end{center} 

\vspace{-1cm}	
(At the end of this calculation we will see that $\beta^2=1$ so this term will contribute with $-d_{2n-1}$ in \eqref{dn}.)
We apply the recursion for $x_{2n-2}$. Again by length reasons there is no $z$-term. Let us consider the $y$-term. It factors through $y_{2n-3}s_{2n}s_{2n+1}$, but $s_{2n}$ and $s_{2n+1}$ are in the right descent set of $y_{2n-3}$, so we obtain that 
$$B_{y_{2n-3}}B_{{2n}}B_{{2n+1}}\cong B_{y_{2n-3}}(2)\oplus B_{y_{2n-3}} \oplus B_{y_{2n-3}}\oplus B_{y_{2n-3}}(-2).$$ 
As $\ell(y_{2n-3})<\ell(z_{2n})$ this term vanishes. 

So we only have to care about the $x$-term, that is:
\vspace{-1cm} 
 
\begin{center}



\end{center}

\vspace{-1cm}
which is $\lambda_{2n-4}$. Thus $\beta=-1$. 
\end{itemize}

We have $d_1=d_3=d_5=0$.
For $n\geq 3$ we obtain the following recursive formula for $d_n$.
\begin{equation}\label{dn}
\frac{1}{d_{2n+1}}=2-d_{2n-1},
\end{equation}
that is solved by
\[ d_{2n+1} =\frac{n-2}{n-1},\]
and with this we finish the proof of \Cref{Thm2} for the elements on the wall.

With basically the same strategy and techniques one can prove \cref{Thm2} beyond the walls. In fact, one finds the following recursive formulas for the coefficients $c_{m}$ and $d_{m,n}$ in \eqref{projoow}.
\[c_1=0\qquad \text{ and }\quad-\frac{1}{c_{m+1}} = -2 + c_m\quad \text{ for } m \geq 1\]
\[-\frac{1}{d_{1,n}}= 3- 2 c_n \qquad  \text{ and } \quad
-\frac{1}{d_{m+1,n}} = 4 - 2c_n + d_{m,n} (2-c_n)^2\quad  \text{ for }m\geq 1\ \]
which are solved by 
\[c_{m+1} =\frac{m}{m+1}\qquad \text{and}\qquad d_{m+1,n}=-\frac{n(n+m+1)}{(n+1)(n+m+2)}.\]

We omit the proof because writing down the whole calculation would make this paper too long.

\section{Indecomposable light leaves (\texorpdfstring{\Cref{Thm3}}{Theorem 3})}\label{S4}

\subsection{ILL on the wall}\label{illonthewall}

The goal of this section is to prove \Cref{Thm3} on the wall. Recall from the introduction that for every $n$ the set $\mathrm{ILL}(x_n)$ is composed by the light leaves
\begin{center}



\end{center} 

\begin{definition}
	For every $y\leq x$ let $\ILL(x,y)$ be the subset of light leaves in $\ILL(x)$ with endpoint a reduced expression for $y$. 
\end{definition}

The goal of this section is to show the following proposition for $x=x_n$.
\begin{prop}\label{restatement} For any $y,x$ with $y\leq x$ the light leaves in $\ILL(x,y)$ form a basis of $\Hom_{\not < y}(B_x,B_y)$.
\end{prop}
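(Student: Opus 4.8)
The plan is to prove \Cref{restatement} for $x=x_n$ by induction on $n$, checking the cases $n\le 4$ directly: there $\undH_{x_n}=\bfN_{x_n}$ (or $\bfN_{x_4}+v\bfN_{x_1}$), the relevant Hom spaces are small, and the $\ILL$ visibly exhaust them. The standing reduction is that, by Soergel's Hom formula together with the self-duality of the $B_w$, the $R$-module $\Hom_{\not<y}(B_{x_n},B_y)$ is graded free of graded rank equal to the coefficient of $\bfH_y$ in $\cha(B_{x_n})=\undH_{x_n}$, which is made completely explicit by \Cref{Thm1}.i). Since the Hom spaces are graded free over the polynomial ring $R$, it suffices to prove: (a) each morphism in $\ILL(x_n,y)$ lies in $\Hom_{\not<y}(B_{x_n},B_y)$; (b) $\sum_{G\in\ILL(x_n,y)}v^{\deg G}$ equals the coefficient of $\bfH_y$ in $\undH_{x_n}$; and (c) $\ILL(x_n,y)$ generates $\Hom_{\not<y}(B_{x_n},B_y)$ over $R$. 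By a graded Nakayama argument, (b) and (c) together force $\ILL(x_n,y)$ to be a free $R$-basis.

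Parts (a) and (b) are a direct inspection of the diagrams \eqref{ILLwall}-\eqref{ILLeven}. The type~I) morphisms are $U$-light leaves of $\mathrm{BS}(\underline{x_n})$ with endpoint a reduced expression of $y$, precomposed with $e_{x_n}$, and as $y$ ranges over all $y\le x_n$ they account for the summand $\bfN_{x_n}$ of $\undH_{x_n}$; the type~II) morphisms occur precisely for $y\le x_{n-3}$, have degree two less than the corresponding type~I), and account for $v\bfN_{x_{n-3}}$; and, when $n$ is even, the type~III) morphisms have endpoints $z_n$ and $z'_n$ of respective degrees $1$ and $2$, accounting for $v\bfH_{z_n}+v^2\bfH_{z'_n}$. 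Matching the resulting degree generating function against \Cref{Thm1}.i) is routine, with \Cref{notperverse} controlling the non-perverse contributions that intervene in the bookkeeping of which $U$-light leaves appear; and since a light leaf with endpoint a reduced expression of $y$ has nonzero image in $\Hom_{\not<y}$, these morphisms do lie in the claimed space (that precomposing with $e_{x_n}$ does not annihilate them is subsumed in (c)).

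For (c) we run the induction, passing from $x_n$ to $x_{n+1}=x_ns_{n+1}$. Apply \Cref{inducinglightleaves} with $x=x_n$, $s=s_{n+1}$ (so $xs=x_{n+1}$): for each $y$,
\[
\Hom_{\not<y}(B_{x_n}B_{s_{n+1}},B_y)\;\cong\;\Hom_{\not<y}(B_{x_n},B_y)(\pm1)\;\oplus\;\Hom_{\not<ys}(B_{x_n},B_{ys}),
\]
the isomorphism sending $(f,g)$ to $f^{U0}+g^{D1}$ when $ys>y$ and to $f^{D0}+g^{U1}$ when $ys<y$. By the inductive hypothesis the right-hand side has $R$-basis $\ILL(x_n,y)\sqcup\ILL(x_n,ys)$, hence $\{f^{U0}:f\in\ILL(x_n,y)\}\sqcup\{g^{D1}:g\in\ILL(x_n,ys)\}$ (or its $D0$/$U1$ variant) is an $R$-basis of $\Hom_{\not<y}(B_{x_n}B_{s_{n+1}},B_y)$. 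By part $(B_n)$ of \Cref{Heckeonthewall}, $B_{x_n}B_{s_{n+1}}\cong B_{x_{n+1}}\oplus B_{y_n}$ for $n$ odd and $\cong B_{x_{n+1}}\oplus B_{y_n}\oplus B_{z_n}$ for $n$ even, so composing this basis with the projection onto the $B_{x_{n+1}}$-summand — i.e. with $e_{x_{n+1}}$ regarded inside $\End(B_{x_n}B_{s_{n+1}})$ — produces a spanning set of $\Hom_{\not<y}(B_{x_{n+1}},B_y)$. The crux is then to show that, modulo $\Hom_{<y}$, each such composite is an $R$-linear combination of the chosen diagrams of $\ILL(x_{n+1},y)$: since $e_{x_{n+1}}$ can be taken to factor through $e_{x_n}\otimes\mathrm{id}_{s_{n+1}}$ (apparent from the recursion of \Cref{Thm2}), the leading part of $e_{x_{n+1}}$ carries a $U$-light leaf of $\mathrm{BS}(\underline{x_n})$ followed by one up-move into a $U$-light leaf of $\mathrm{BS}(\underline{x_{n+1}})$ precomposed with $e_{x_{n+1}}$, i.e. into a type~I)/II)/III) diagram for $x_{n+1}$; and, feeding in the explicit recursive form of $e_{x_{n+1}}$, its two correction terms ($c_{n+1}$, $d_{n+1}$) contribute only morphisms factoring through $B_{y_n}$ (resp. $B_{z_n}$), which either vanish modulo $\Hom_{<y}$ or are again of $\ILL$-type, by the same length and factorization arguments used to compute $c_n,d_n$ in \Cref{S3}. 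The degree-$0$ instance of this computation also supplies the identification, deferred to \Cref{restatement} in the proof of \Cref{deg0onthewall}, of a generator of $\Hom^0(B_{z_{2n}},B_{x_{2n}}B_{s_{2n+1}})$ with the type~III) element of $\ILL(x_{2n},z_{2n})$. Granting all this, $\ILL(x_{n+1},y)$ spans $\Hom_{\not<y}(B_{x_{n+1}},B_y)$, and by (b) it is a free $R$-basis, which closes the induction.

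The main obstacle is precisely the last step of the previous paragraph: controlling how the projector correction terms $c_{n+1},d_{n+1}$ of \Cref{Thm2} interact with the $U0$/$D0$/$D1$/$U1$ decorations, and keeping completely precise track of which of the induced morphisms $f^{U0},g^{D1}$ (resp. $f^{D0},g^{U1}$) persist in the $B_{x_{n+1}}$-summand rather than being absorbed into $B_{y_n}$ or $B_{z_n}$. This is a diagrammatic bookkeeping argument of the same nature as, but more delicate than, the coefficient computations carried out in \Cref{S3}.
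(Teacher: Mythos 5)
Your overall framework is right — the induction over $n$ via \Cref{inducinglightleaves}, the graded-rank comparison with \Cref{Heckeonthewall}, and the reduction of the problem to showing that the induced maps $f^{U0},g^{D1}$ (resp. $f^{D0},g^{U1}$), after precomposition with $e_{\undx_{n+1}}$, land in the $R$-span of $\ILL(x_{n+1})$ — and this matches the paper's setup. But the step you flag yourself as ``the main obstacle'' is precisely where the paper's argument lives, and routing it through the explicit recursion of \Cref{Thm2} is both incomplete and logically hazardous.

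First, the logical hazard: \Cref{Thm2} depends on \Cref{deg0onthewall}, and the proof of \Cref{deg0onthewall} explicitly cites \Cref{restatement} to identify a generator of $\Hom^1(B_{z_{2n}},B_{x_{2n}})$ as the type~III diagram. Using the explicit $e_{\undx_{n+1}}$ with its coefficients $c_{n+1},d_{n+1}$ inside the induction step for \Cref{restatement} therefore threatens circularity. One can perhaps disentangle this by a careful simultaneous induction, but you do not set this up, and the paper avoids it entirely by making its proof of \Cref{restatement} rely only on the absorption property of $e_{\undx_{n+1}}$ plus \Cref{Thm1} — never on the recursive formula from \Cref{Thm2}.

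Second, and more substantively, what actually does the work in the paper is a purely combinatorial analysis that does not appear in your proposal at all. When $b\in\ILL(x_n,ys_{n+1})$ and one forms $b^{D0}$ or $b^{D1}$, one has to understand which $01$-sequences ending in a $D$ can occur after a string of $U$'s. \Cref{tree} classifies these endings into three families ($100*$, $1101*$, $10\overbrace{1\cdots1}^{2k}0*$); \Cref{1101d} shows that the $1101*$ family gives the zero morphism after projecting; \Cref{movingcaps} (an iterated consequence of \Cref{1101d}) converts the $100*$ family into a type~II diagram; and an inductive \Cref{claim}, again using the nil-Hecke relation and \Cref{vanishing}, reduces the $10\overbrace{1\cdots1}^{2k}0*$ family to type~II or type~III. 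These are the concrete lemmas that replace your sentence ``the correction terms contribute only morphisms factoring through $B_{y_n}$ (resp.\ $B_{z_n}$), which either vanish or are again of $\ILL$-type.'' Without them (or substitutes of comparable precision), the spanning claim (c) remains unproved. As written, your argument identifies the right reduction but stops short of the combinatorial and diagrammatic core of the proof.
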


\Cref{restatement} implies \Cref{Thm3} by a well-known standard argument (see for example \cite[\S 7.3]{EW2}).

Recall the definition of a $01$-sequence from \cite[\S 2.4]{EW2}.
We now start by classifying all the $01$-sequences for $\underline{x_n}$ with a $D$ (or Down) in the last position, and such that only $U$ (or Up) occur before the last position. 

\begin{lemma}\label{tree}
	Let $\epsilon$ be a $01$-sequence for $\underline{x_n}$. Assume that $\epsilon_i$ is a $U0$ or a $U1$ for any $i<n$ and that $\epsilon_n$ is a $D0$ or a $D1$. Then $\epsilon$'s ending is one of the following (where $*$ means it can be either $0$ or $1$).
	\begin{enumerate}
		\item $100*$
		\item $1101*$ 
		\item $10\overbrace{11\ldots 1}^{2k}0*$.
	\end{enumerate}
\end{lemma}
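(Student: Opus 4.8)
The statement concerns $01$-sequences $\epsilon$ for the reduced expression $\underline{x_n}=123\cdots n$ in which every entry before the last is an ``Up'' ($U0$ or $U1$) and the last entry is a ``Down'' ($D0$ or $D1$). The plan is to track the \emph{subexpression} $\pi$ encoded by $\epsilon$: at step $i$, $U1$ appends $s_i$ and $U0$ does not, so after $n-1$ Up-steps the element $w_{n-1}\in W$ reached is a subword of $123\cdots(n-1)$ obtained by keeping exactly the positions marked $U1$. The key constraint is the decoration rules of \cite[\S 2.4]{EW2}: whether step $i$ can be $U0$ or $U1$ is governed by whether $w_{i-1}s_i>w_{i-1}$ (must be $U1$ if $w_{i-1}s_i>w_{i-1}$, so in fact $U0$ forces $w_{i-1}s_i<w_{i-1}$), and dually the last step $\epsilon_n$ being a Down forces $w_{n-1}s_n<w_{n-1}$. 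So the combinatorial content is: classify subwords $w_{n-1}$ of $123\cdots(n-1)$ that arise from a legal all-Up sequence and satisfy $w_{n-1}s_n<w_{n-1}$, and then read off the tail of $\epsilon$.

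\textbf{Key steps.} First I would translate each Up-step rule into a statement about when a simple reflection can be ``skipped'' ($U0$): this requires $w_{i-1}s_i<w_{i-1}$, i.e. $s_i$ is in the right descent set of the partial product. Using the $\tilde A_2$ geometry (paths in the triangular tessellation, as in \Cref{redex}) together with \Cref{stu} and \Cref{lessdot}, I would show that along $123\cdots n$ the only ways to create a right descent at step $i$ are very restricted — essentially the partial word must end in a braid triplet pattern $sts$ or $stst$ or more generally $s(ts)^k$, which is exactly where the Bruhat order drops. Concretely: after the first $U1$ at some position, the next time a $U0$ is allowed is only after a braid triplet has been completed, and \Cref{redex} forces the spacing of these triplets. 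Second, I would do the bookkeeping: a legal tail of the $01$-sequence corresponds to choosing where the single ``downward'' behaviour occurs near the end, and a short finite case analysis (the element $x_n$ has a unique reduced expression and only $6$ elements below it by \Cref{lessdot}, with the precise list computed after \Cref{lessdot}) shows the only possibilities are: the word drops at the last step via a subword ending $\ldots s_{n-3}s_{n-2}s_{n-3}$ then $U0$ then Down (giving the $100*$ tail), or ending $\ldots s_{n-3}s_{n-2}s_{n-3}s_{n-2}$ then Down (the $1101*$ tail, after re-indexing), or a longer ``zigzag'' $10\overbrace{11\cdots1}^{2k}0*$ coming from iterated braid moves whose distances are forced to be odd by \Cref{redex}. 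Third, I would check that each of these three tails is actually realizable by exhibiting the corresponding subword and verifying the decoration rules, and that no other tail survives because any other pattern either fails to be reduced at an intermediate stage (so no legal $U1$) or fails the final descent condition (so $\epsilon_n$ cannot be a Down).

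\textbf{Main obstacle.} The delicate point is the exhaustiveness of case (3), i.e. ruling out mixed or longer patterns: a priori one could imagine several braid triplets interacting. The control comes from \Cref{redex} (distances between braid triplets must be odd for reducedness) combined with the fact that a $U0$ is only legal right after a completed triplet — so between consecutive $U0$'s the number of letters is even, forcing the block of $1$'s between the two flanking $0$'s to have even length $2k$. I would make this precise by an induction on $n$, peeling off the last completed braid triplet and invoking the inductive classification for the shorter word; the base cases ($n\le 6$, say) are checked by hand. I expect writing the induction so that it cleanly separates ``the tail'' from ``the rest'' — without having to carry along the full history of the subexpression — to be the part requiring the most care.
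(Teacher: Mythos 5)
Your proposal contains a fundamental misreading of the $U/D$ decoration that propagates through the argument. You write that ``must be $U1$ if $w_{i-1}s_i>w_{i-1}$, so in fact $U0$ forces $w_{i-1}s_i<w_{i-1}$.'' This is exactly backwards. As the paper's own setup in \Cref{induction} shows (and as in \cite[\S 2.4]{EW2}), the decoration $U$ is assigned at step $i$ precisely when $w_{i-1}s_i>w_{i-1}$, and whether the step is $U0$ or $U1$ is the free choice of whether to multiply; the decoration $D$ is forced precisely when $w_{i-1}s_i<w_{i-1}$. Consequently your key claim ``a $U0$ is only legal right after a completed triplet'' is false --- $U0$ is legal whenever $s_i$ is not a right descent of $w_{i-1}$, which is the generic situation, not a special one. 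The condition the lemma imposes (all $U$ before position $n$, a $D$ at position $n$) is instead: $s_i$ is \emph{never} a right descent of the partial product $w_{i-1}$ for $i<n$, and $s_n$ \emph{is} a right descent of $w_{n-1}$. Your reformulation in the first paragraph (classify subwords $w_{n-1}$ of $123\cdots(n-1)$ arising from all-Up sequences with $w_{n-1}s_n<w_{n-1}$) is correct, but the reasoning that follows uses the inverted rule, so the proposed evenness argument for the $2k$ in case (3) does not go through. There is also a smaller imprecision: \Cref{lessdot} bounds the number of elements $y$ with $y\lessdot x$ (covers), not the number of elements below $x$.

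What the correct constraint does buy you is close in spirit but opposite in orientation to what you wrote: what must be controlled is when a $D$ is \emph{forced}, i.e., when a right descent is created. The paper does this by a tree-based exhaustive analysis of the possible tails of $\epsilon$, ruling out each impossible tail using \Cref{redex}, \Cref{stu}, a ``$w$ must be the identity'' argument for the blue boxes (which uses that a tail starting $001$ or $000$ forces $w$ to have no right descents at all), and a parabolic-coset argument for the yellow box. If you want to carry out your more structural plan, you would need to argue instead that the subword $s_{p_1}\cdots s_{p_r}$ giving $w_{n-1}$ must terminate in a very restricted pattern of positions (essentially $p_r\in\{n-1,n-2,n-3\}$, with the earlier $p_i$ forced back in steps of $1$ and $2$ alternately by the no-descent condition), and this would indeed recover the three tails --- but as written the argument rests on the wrong rule.
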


\begin{proof}

Let $\calA$ be the set of $01$-sequences $\epsilon$ for $\undx_n$ such that $\epsilon_i$ is a $U0$ or a $U1$ for any $i<n$ and that $\epsilon_n$ is a $D0$ or a $D1$.
The following tree shows all the possible endings of a $01$-sequence. We will prove that the sequences which are in colored boxes (by blue, red, green and yellow) cannot be ending of elements of $\calA$.
This means that the only possible endings for the sequences in $\calA$ are the ones in the white boxes, as wished.
	\begin{center}
		
\tikzset{every picture/.style={line width=0.7pt}}		
	\begin{tikzpicture}[scale=0.75]
	\node (d) at (0,0) {$*$};
	\node (1d) at (5,-1) {$1*$};
	\node (0d) at (-5,-1) {$0*$};
	\node (01d) at (3,-2) {$01*$};
	\node (11d)at (7,-2)[rectangle, draw, fill=Mgreen!20] {$11*$};
	\node (001d) at (2,-3)[rectangle, draw, fill=Mblue!20] {$001*$};
	\node (101d) at (5,-3) {$101*$};
	\node (1101d) at (3,-4)[rectangle, draw] {$1101*$};
	\node (0101d) at (7,-4) {$0101*$};
	\node (00101d) at (5.5,-5)[rectangle, draw, fill=Mblue!20] {$00101*$};[rectangle, draw, fill=blue!20]
	\node (10101d) at (8.5,-5) [rectangle, draw, fill=yellow!20]{$10101*$};
	
	\node (00d) at (-8,-2) {$00*$};
	\node (10d) at (-2,-2) {$10*$};
	\node (000d) at (-9,-3)[rectangle, draw, fill=Mblue!20] {$000*$};
	\node (100d) at (-7,-3) [rectangle,draw] {$100*$};
	\node (010d) at (-5,-3) {$010*$};
	\node (110d) at (-1,-3) {$110*$};
	\node (0010d) at (-6,-4)[rectangle, draw, fill=Mblue!20] {$0010*$};
	\node (1010d) at (-4,-4)[rectangle, draw, fill=Mgreen!20] {$1010*$};
	\node (2k) at (-3.5,-5) {$0\overbrace{11\ldots 1}^{2k}0*$};
	\node (2k+1) at (1,-5) {$0\overbrace{11\ldots 1}^{2k+1}0*$};
	\node (002k) at (-6.5,-7)[rectangle, draw, fill=Mblue!20] {$00\overbrace{11\ldots 1}^{2k}0*$};
	\node (102k) at (-3,-7)[rectangle, draw] {$10\overbrace{11\ldots 1}^{2k}0*$};
	\node (002k+1) at (0.5,-7)[rectangle, draw, fill=Mblue!20] {$00\overbrace{11\ldots 1}^{2k+1}0*$};
	\node (102k+1) at (4,-7)[rectangle, draw, fill=Mred!20] {$10\overbrace{11\ldots 1}^{2k+1}0*$};
	\path[-]
	(d) edge (1d) edge (0d)
	(1d) edge (01d) edge (11d)
	(01d) edge (001d) edge (101d)
	(101d) edge (1101d) edge (0101d)
	(0101d) edge (00101d) edge (10101d)
	
	(0d) edge (00d) edge (10d)
	(00d) edge (000d) edge (100d)
	(10d) edge (010d) edge (110d)
	(010d) edge (0010d) edge (1010d)
	(110d) edge (2k) edge (2k+1)
	(2k) edge (002k) edge (102k)
	(2k+1) edge (002k+1) edge (102k+1)
	;
	\end{tikzpicture}
	\end{center}

Each different color of the boxes represents a different reason why that sequence cannot be the ending of an element in $\calA$.
\begin{itemize}
\item The sequence in the red box because \Cref{redex} implies that $\epsilon_n$ is $U$.
 \item Sequences in a green box because \Cref{stu} implies that $\epsilon_n$ is $U$.
 \item Consider a sequence in a blue box. Let $w$ be the element expressed by the 
 sequence \emph{before} using the part of the sequence in the box.
 All the sequences in blue boxes begin with $001$ or $000$. This means that there is no simple reflection in the right descent set of $w$, so we must have $w=e$. But if this happens, then $\epsilon_n$ must be a U (this last part of the argument is an easy box by box  check).
\item Consider the sequence in the yellow box. Label the simple reflections indexed in the box by $stustu$. Let $w$ be defined as in the previous point. We have $wst>ws$ and $wsu>ws$, hence $ws$ is minimal in its right $\{t,u\}$-coset and $wstut>wsut$, forcing $\epsilon_n$ to be a $U$. \qedhere
\end{itemize}
\end{proof}

The next Lemma is the crucial observation of this section. 
\begin{lemma}\label{1101d}
For every $n\geq 5$ we have
	\begin{center}


\end{center}

	Thus $\phi_n$ factors through a degree $0$ morphism from $B_{x_n}$ to $B_{x_{n-5}}B_{s_n}B_{s_{n-1}}B_{s_n}$. 
	From \Cref{notperverse} we know that \[B_{x_{n-5}}B_{s_n}B_{s_{n-1}}B_{s_n}\cong \begin{cases} B'\oplus B_{y_{n-4}}(1)\oplus B_{y_{n-4}}(-1)& \text{ if }n\text{ even}\\
	B'\oplus B_{y_{n-4}}(1)\oplus B_{z_{n-4}}(1)\oplus B_{y_{n-4}}(-1)\oplus B_{z_{n-4}}(-1) &\text{ if }n\text{ odd }\end{cases}\]
	where $B'$ is a perverse object.
	
	Since $B_{x_n}$ is not a summand of $B_{x_{n-5}}B_{s_n}B_{s_{n-1}}B_{s_n}$, the image of $\phi_n$ must be contained in $B_{y_{n-4}}(1)$ if $n$ even and in $B_{y_{n-4}}(1)\oplus B_{z_{n-4}}(1)$ if $n$ odd. By Soergel's hom formula, if $y<x_n,$ there exists a non-trivial map of degree one between $B_{x_n}$ and $B_y$ if and only if $\mu(y,x_n)\neq 0$, and this can happen only if $\ell(x_n)-\ell(y)\leq 3$ by part A of \Cref{Heckeonthewall}.
	Since $\ell(z_{n-4})<\ell(y_{n-4})=\ell(x_n)-5$ we conclude that $\phi_n=0$.
\end{proof}

\begin{cor}\label{movingcaps}
	For every $n\geq k+4$ we have
\vspace{-5mm}

\begin{center}

\end{equation}

\end{center}

	After composing with the projector, the LHS in \eqref{onemorphismtwocaps} vanishes by \Cref{1101d}. Now we can easily conclude by induction on $k$.
\end{proof}
	
\begin{lemma}\label{match}
For all $n\in \mathbb{N}$ and $y\in W$ with $y\leq x_n$, the graded ranks of $\ILL(x_{n},y)$ and of $\Hom_{\not <y}(B_{x_{n}},B_{y})$ match up.
\end{lemma}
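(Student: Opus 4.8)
The statement is a numerical matching: for each $y \leq x_n$, we must show that the cardinality of $\ILL(x_n,y)$ (taking grading shifts into account) equals the graded rank of $\Hom_{\not<y}(B_{x_n},B_y)$, which by Soergel's Hom formula is the coefficient of $\bfH_y$ in $\undH_{x_n}$, i.e.\ the Kazhdan--Lusztig polynomial $h_{y,x_n}(v)$. So the plan is to compute both sides explicitly and compare, using \Cref{Heckeonthewall}$(A_n)$ for the right-hand side and \Cref{tree} together with the explicit description of $\ILL(x_n)$ for the left-hand side.

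First I would recall that a light leaf with source $\mathrm{BS}(\undx_n)$ and target a reduced expression for $y$ is determined by a $01$-sequence for $\undx_n$ whose associated Bruhat stroll ends at $y$, and that its degree is the defect of that sequence. The light leaves in $\ILL(x_n)$ are by construction those whose $01$-sequence has the special form: a $U$-light leaf on an initial segment $\undx_m$ (types I and II, with $m$ arbitrary) or the type III morphism (only when $n$ is even). One checks directly from the pictures \eqref{ILLwall}--\eqref{ILLeven} which elements $y$ occur as endpoints and with which degree: type I has endpoint $x_n$ itself in degree $0$; type II produces the endpoints obtained by capping off the last few strands, landing (after absorbing into the favorite projector, using the absorption property) on $x_{n-3}$ in degree $1$ via the $U$-part together with a cup; type III (for $n$ even) lands on $z_n$ and $z'_n$. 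Against this, $(A_n)$ gives $h_{y,x_n}(v) \neq 0$ only for $y \in \{x_n\}$ (coefficient $1$), $y \leq x_{n-3}$ (coefficient $v\cdot h_{y,x_{n-3}}(v)$), and for $n$ even also $y = z_n$ (coefficient $v$) and $y=z'_n$ (coefficient $v^2$). So the comparison reduces to a recursion: the contribution of the "$x_{n-3}$ part" of $\ILL(x_n)$ must match $v$ times the full count for $x_{n-3}$, and the type III morphisms must account exactly for the $z_n$, $z'_n$ terms.

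The recursive step is where \Cref{tree} does the work: it classifies precisely the $01$-sequences for $\undx_n$ that are "all $U$ then one $D$", and the endings $100*$, $1101*$, $10\,\underbrace{1\cdots 1}_{2k}\,0*$ are exactly the combinatorial shapes appearing in the type II and type III pictures. I would argue that after applying a $D$ at some position one is, up to the favorite projector's absorption property, reduced to counting $\ILL$ on a shorter element of the form $x_m$, and that the possible "return points" are governed by \Cref{tree}; summing the contributions and matching degrees against $(A_n)$ closes the induction. The base cases $n \leq 4$ are checked by hand (for $n \leq 3$, $\undH_{x_n} = \bfN_{x_n}$ and $\ILL(x_n)$ is just type I and the type II family capping to $x_{n-3}$ when $n = 3$; for $n = 4$ one additionally has the $v\bfH_{x_1}$ term).

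\textbf{Main obstacle.} The hard part is not the Hecke-algebra bookkeeping — that is a direct consequence of \Cref{Heckeonthewall} — but rather making the reduction "after a $D$, one is reduced to $\ILL$ of a shorter $x_m$" precise at the level of $01$-sequences and degrees: one must verify that each of the three endings in \Cref{tree} contributes exactly the degree shift predicted by the defect formula and that the resulting shorter sequence is again of the admissible "$U$'s then $D$" type relative to $\undx_m$, so that the induction hypothesis applies without overcounting. In particular the type III morphisms for $n$ even must be shown to correspond bijectively to the $10\,\underbrace{1\cdots1}_{2k}\,0*$ endings landing on $z_n, z'_n$ and to contribute in degrees $1$ and $2$ respectively, matching the $v\bfH_{z_n} + v^2 \bfH_{z'_n}$ summand. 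I expect this degree/defect matching to be the only genuinely delicate point; once it is in place, \Cref{match} follows by induction on $n$.
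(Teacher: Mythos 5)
Your opening move — reduce both sides to Hecke algebra data via Soergel's Hom formula and then invoke \Cref{Heckeonthewall} — is exactly the paper's proof, which in fact stops there. The paper simply observes that the graded rank of $\Hom_{\not<y}(B_{x_n},B_y)$ equals the coefficient of $\bfH_y$ in $\undH_{x_n}$, and that this visibly matches $\ILL(x_n,y)$ once one reads off from the definitions that the type~I morphisms are indexed by the $y\leq x_n$ in degree $\ell(x_n)-\ell(y)$ (the summand $\bfN_{x_n}$), the type~II morphisms are indexed by the $y\leq x_{n-3}$ in degree $\ell(x_{n-3})-\ell(y)+1$ (the trivalent vertex plus two dots turns $\undx_n$ into $\undx_{n-3}$ and shifts degree by $+1$, giving the summand $v\bfN_{x_{n-3}}$), and the two type~III morphisms for $n$ even land on $z_n,z'_n$ in degrees $1,2$. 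No recursion is performed.

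The recursion you build around \Cref{tree} is not how the paper argues, and I think it leads you astray. \Cref{tree} classifies $01$-sequences for $\undx_n$ that are all $U$'s followed by a single $D$ at the last position; it is the input to the spanning argument in the proof of \Cref{restatement}, where one must show that after precomposing with the projector, a light leaf ending in a $D$ collapses onto the $\ILL$ set. It plays no role in the dimension count of \Cref{match}, which is about $U$-light leaves (no $D$'s at all in types I and III, and a fixed cap-and-dot pattern, not an arbitrary $D$-step, in types II and III). Similarly, the step ``after a $D$, one is reduced to $\ILL$ of a shorter $x_m$'' is not correct as stated: after a $D$-step in a generic light leaf for $\undx_n$, the Bruhat stroll sits at a shorter element with no reason to be of the form $x_m$, so the induction hypothesis you want to apply does not attach. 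What actually makes the counting go through without recursion is the purely combinatorial fact that for $\undx_n$ there is exactly one $U$-light leaf per endpoint $y\leq x_n$, always of degree $\ell(x_n)-\ell(y)$, so types I and II correspond bijectively and in the right degrees to $\bfN_{x_n}$ and $v\bfN_{x_{n-3}}$. If you want to expand the paper's terse proof, that is the statement to verify, not a reduction via \Cref{tree}.
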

\begin{proof}
By Soergel's hom formula, if $(-,-):H\times H\rightarrow \mathbb{Z}[v,v^{-1}]$ is the $\mathbb{Z}[v,v^{-1}]-$bilinear pairing defined by $(\bfH_x,\bfH_y)=\delta_{x,y}$ (the Kronecker delta), then the graded rank of \newline
 $\Hom_{\not <y}(B_{x_{n}},B_{y})$ is given by $(\underline{\bfH}_{x_n}, \bfH_y).$ We can conclude by \Cref{Heckeonthewall}.
\end{proof}

\begin{lemma}\label{vanishing}
Let $y\leq x_n$. If $f$ is a light leaf morphism $f: BS(\undx_n)\raw B_y$ with two or more D's, then $f\circ e_{\undx_n}=0$ in $\Hom_{\not <y}(B_{x_{n}},B_{y})$ 
\end{lemma}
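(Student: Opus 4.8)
The strategy is an induction on the number $k$ of $D$'s, but it is cleanest to reduce everything to the case of \emph{exactly} two $D$'s; once we know that any light leaf with exactly two $D$'s vanishes modulo lower terms, a light leaf with $\geq 3$ $D$'s factors (by the recursive/tree structure of the light leaf construction from \cite[\S2.4]{EW2}) through a light leaf morphism for a proper initial segment $\underline{x_m}$ with at least two $D$'s, and we can apply the absorption property $e_{\undx_n}=(e_{\undx_m}\otimes\Iden)\circ e_{\undx_n}$ together with the inductive vanishing on $\underline{x_m}$. So the heart of the matter is the two-$D$ case, and there the key tool is \Cref{tree}: any $01$-sequence for $\underline{x_n}$ that is all $U$'s except for the last entry being a $D$ has one of the three shapes listed there. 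More precisely, if a light leaf $f$ has exactly two $D$'s, then after the first $D$ (at some position $j$) and the intermediate $U$'s, the second $D$ occurs last, and the tail of the sequence from position $j$ onward is one of the endings $100*$, $1101*$, or $10\,\overbrace{11\ldots1}^{2k}0*$ classified in \Cref{tree} (applied to the sub-expression $s_j s_{j+1}\cdots s_n$, which is again a ``$\underline{x}$-type'' reduced word).

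First I would set up the factorization: write $f = g \circ h$ where $h: BS(\undx_n)\to BS(\undx_j)\cdot BS(s_{j+1}\cdots s_n)$ records the first $j$ steps (all $U$'s, producing $e_{\undx_j}$ up to the favorite projector) and $g$ handles the tail. Using absorption, $f\circ e_{\undx_n}$ equals $f$ composed with $(e_{\undx_j}\otimes\Iden)\circ e_{\undx_n}$, so it suffices to understand the tail morphism $g$ composed on the left with $e_{\undx_j}\otimes\Iden$. Now I would invoke \Cref{movingcaps}: the tail, being a cap pattern of the form (a $D0$ or $D1$ producing a cup, followed by $U$'s, followed by a final $D$ producing another cup, in the configuration $10\overbrace{1\ldots1}^{2k}0$ or its degenerate versions $100$, $1101$), is precisely — up to a sign $(-1)^k$ and composition with the projector $e_{\undx_j}$ — the ``two caps far from the projector'' diagram that \Cref{movingcaps} shows collapses via \Cref{1101d}. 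For the ending $1101*$ one instead directly hits the subdiagram $\phi_j$ of \Cref{1101d} (the $1101$ pattern is exactly the clasp-cap-cup configuration named $\phi$), which is zero modulo lower terms. For the ending $100*$ one has $j+2\leq n$ with a short cap pattern; here \Cref{movingcaps} with $k=0$ (or a direct one-step argument) again forces the composite into $B_{y'}$ for some $y'$ of length $\leq \ell(x_j)-5$, too small to support a degree-counting morphism landing in $B_y$ with $y\leq x_n$ of the required length, hence it vanishes in $\Hom_{\not<y}$.

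The main obstacle I expect is the bookkeeping that matches the abstract ``ending of a $01$-sequence'' data to the concrete diagrammatic cap/cup patterns, and in particular verifying that in each of the three endings the relevant subdiagram genuinely agrees (after a harmless sequence of six-valent vertices, which are the identity modulo lower terms) with the morphism $\phi_j$ of \Cref{1101d} or the two-caps morphism of \Cref{movingcaps}. One must be careful that the $U$'s sandwiched between the two $D$'s do not change the element's coset behavior in a way that breaks the hypothesis $n\geq k+4$ (resp. $n\geq 5$) needed to apply those lemmas; this is where \Cref{stu} and the explicit form of $y_n$, $z_n$ get used to check lengths. I would also need to note that once the tail morphism is shown to factor through $B_{w}$ with $\ell(w) < \ell(y)$ (which is what \Cref{1101d} and \Cref{movingcaps} give, since the image lands in summands of $B_{x_{n-5}}B_{s_n}B_{s_{n-1}}B_{s_n}$ of length $\leq \ell(x_n)-5 < \ell(y)$ for the relevant $y$), it lies in $\Hom_{<y}(B_{x_n},B_y)$ and hence is zero in the quotient $\Hom_{\not<y}$. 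This completes the induction and proves the lemma.
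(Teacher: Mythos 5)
Your proposal takes a genuinely different and considerably heavier route than the paper. The paper proves \Cref{vanishing} by a pure degree count, with no diagrammatic manipulation and no appeal to \Cref{tree}, \Cref{1101d} or \Cref{movingcaps}. By Soergel's hom formula the graded rank of $\Hom_{\not<y}(B_{x_n},B_y)$ equals the Kazhdan--Lusztig polynomial $h_{y,x_n}(v)$, and the explicit formula $(A_n)$ of \Cref{Heckeonthewall} shows that every nonzero coefficient of $h_{y,x_n}(v)$ sits in degree at least $\ell(x_n)-\ell(y)-2$: the contribution of $\bfN_{x_n}$ lands in degree $\ell(x_n)-\ell(y)$, the contribution of $v\bfN_{x_{n-3}}$ in degree $\ell(x_n)-\ell(y)-2$, and the $z_n$, $z'_n$ terms likewise. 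On the other hand, writing the light leaf as a sequence of $U0,U1,D0,D1$ steps and using $\ell(y)=\#U1-\#D1$ together with $\deg(f^{U0})=\deg(f)+1$, $\deg(f^{D0})=\deg(f)-1$, $\deg(f^{U1})=\deg(f^{D1})=\deg(f)$ (as in \Cref{inducinglightleaves}), one gets $\deg(f)=\ell(x_n)-\ell(y)-2\cdot(\#\text{D's})$, so two or more D's forces $\deg(f)\leq\ell(x_n)-\ell(y)-4$, strictly below the range where $\Hom_{\not<y}(B_{x_n},B_y)$ is nonzero. The lemma follows in two lines.

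Your sketch instead tries to rerun the diagrammatic machinery the paper reserves for \Cref{restatement}, and it has concrete gaps. First, you propose to apply \Cref{tree} to ``the tail of the sequence from position $j$ onward'', but that tail \emph{begins} with a D, whereas \Cref{tree} classifies sequences that are all U before a single terminal D; the lemma would apply instead to the initial segment $\epsilon_1,\ldots,\epsilon_j$. Second, after the first D the running element of the Bruhat stroll is no longer $x_m$-shaped, and the intermediate run of $U0$'s and $U1$'s between the two D's is arbitrary, so the resulting subdiagram need not match the very particular ``two-cap'' or $\phi_m$ configurations of \Cref{1101d} and \Cref{movingcaps}, which are moreover stated as compositions with $e_{\undx_m}$. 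Third, the reduction from $\geq 3$ D's to $2$ D's requires knowing that vanishing of the initial-segment leaf modulo terms $<y'$ (for the intermediate endpoint $y'$) propagates to vanishing of the full composite modulo terms $<y$; this does hold, by compatibility of the operations $(-)^{U0},(-)^{U1},(-)^{D0},(-)^{D1}$ with the quotient categories, but your sketch does not supply it. None of these is necessarily fatal, but you would be reconstructing the content of \Cref{restatement} to prove a statement that is an immediate numerical consequence of \Cref{Heckeonthewall}.
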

\begin{proof}
As the graded rank of 
 $\Hom_{\not <y}(B_{x_{n}},B_{y})$ is given by $(\underline{\bfH}_{x_n}, \bfH_y),$ by \Cref{Heckeonthewall} we have that $\Hom_{\not <y}(B_{x_{n}},B_{y})$ has a basis consisting of morphisms of degree greater or equal than $\ell(x_n)-\ell(y)-2$. If a light leaf has two or more D's, then 
 its degree is lesser or equal than $\ell(x_n)-\ell(y)-4$.
\end{proof}

We are now ready to prove the main result of this section.

\vspace{0.2cm}

\emph{Proof of \Cref{restatement}}
We assume the claim for $x_n$ and show it for $x_{n+1}$.
	We have $\undx_{n+1}=\undx_n s_{n+1}$. 
	Let $y\leq x_n$ and assume that $ys_{n+1}>y$. 
	We want to show that $\ILL(x_{n+1},y)$ and $\ILL(x_{n+1},ys_{n+1})$ are bases respectively of $\Hom_{\not <y}(B_{x_{n+1}},B_y)$
	and of $\Hom_{\not <ys_{n+1}}(B_{x_{n+1}},B_{ys_{n+1}})$. By \Cref{match}, the graded ranks match up, so it is enough to show that they generate as $R$-modules. To do so, for the rest of this proof we will use the following strategy.
	
	\vspace{0.5cm}
\textbf{\emph{Strategy:}}	 \emph{We will show that after precomposing with the idempotent $e_{\undx_{n+1}}$ every map in
	$\Hom_{\not <y}(B_{x_n}B_{s_{n+1}},B_y)$
	and in $\Hom_{\not <ys_{n+1}}(B_{x_n}B_{s_{n+1}},B_{ys_{n+1}})$ lies in the $R$-span of the set $\ILL(x_{n+1})$.}
	
	\vspace{0.5cm}
	
	Let $\ILL(x_n,y)=\{f_1,\ldots,f_p\}$ and $\ILL(x_n,ys_{n+1})=\{g_1,\ldots,g_q\}$. Then, by \Cref{inducinglightleaves} the sets $\{f_1^{U0},\ldots,f_p^{U0},g_1^{D1},\ldots,g_q^{D1}\}$ and $\{f_1^{U1},\ldots,f_p^{U1},g_1^{D0},\ldots,g_q^{D0}\}$ are bases respectively of 
	 $\Hom_{\not <y}(B_{x_n}B_{s_{n+1}},B_y)$
	and of $\Hom_{\not <ys_{n+1}}(B_{x_n}B_{s_{n+1}},B_{ys_{n+1}})$.

	\begin{itemize}	
\item	Consider the case $b\in \ILL(x_n,y)$ (i.e., $b=f_i$ for some $i \in \{1,2,\ldots, p\})$.
	If $b$ is of type I or II then by the absorption property, when precomposing $b^{U0}$ and $b^{U1}$ with the idempotent $e_{\undx_{n+1}}$ we obtain again an element of type I or II of $\ILL(x_{n+1})$. On the other hand, $b$ cannot be of type III since we required that $ys_{n+1}>y$, thus we fulfilled our strategy for $b\in \ILL(x_n,y)$.
	
\item	Consider now the case $b\in \ILL(x_n,ys_{n+1})$. Let $b$ be of type II or III. By \Cref{vanishing} the two maps $b^{D0}$ and $b^{D1}$ must be zero. 
It remains to consider only the case
when $b$ is of type I. As a consequence of \Cref{tree}, there are three cases to consider.
	\begin{itemize}
		\item In the case ending with $100*$ the two induced maps $b^{D0}$ and $b^{D1}$ precomposed by $e_{\undx_{n+1}}$ are of type II by \Cref{movingcaps}.
		\item The case $1101*$ gives rise to morphisms $b^{D0}$ and $b^{D1}$ that vanish if one precomposes with $e_{\undx_{n+1}}$ by \Cref{1101d}. 
		\item The remaining case $10\overbrace{11\ldots 1}^{2k}0*$ is divided into two sub-cases. If $n=2k+4$ then $b^{D0}$ and $b^{D1}$ precomposed by $e_{\undx_{n+1}}$ are precisely the two light leaves of type III in $\ILL(x_{n+1})$.
		
		\begin{claim}\label{claim}
				If $n>2k+4$ the maps $b^{D0}$ and $b^{D1}$ precomposed by $e_{\undx_{n+1}}$ are linear combinations of elements of type II in $\ILL(x_{n+1})$.
		\end{claim}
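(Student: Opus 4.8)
The plan is to prove \Cref{claim} by the same strategy used for \Cref{movingcaps}: first reduce $b^{D0}$ and $b^{D1}$ (precomposed with $e_{\undx_{n+1}}$) to a morphism involving two caps, then move those caps around using \Cref{1101d} and \Cref{movingcaps}, and finally match the resulting diagram against elements of type II in $\ILL(x_{n+1})$. Concretely, the $01$-sequence $b$ ends in $10\overbrace{11\cdots1}^{2k}0*$, so the morphism $b$ contains, near the top, a cap (the last $0$) sitting on top of a run of six-valent vertices coming from the $2k$ consecutive $1$'s, with a $D$ before the final $0$ corresponding to the leading $10$ of the pattern. Applying a $D0$ or $D1$ on the extra strand $s_{n+1}$ adds one more cap (or a cap followed by a trivalent vertex and a braid). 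Since we are assuming $n > 2k+4$, there is at least one extra strand between the start of the $11\cdots1$ run and the left boundary of the Bott-Samelson word.

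The key steps, in order, would be: (1) push the $D$-decoration on the $s_{n+1}$ strand down through the six-valent vertices exactly as in the proof of \Cref{movingcaps}, using the decomposition \eqref{onemorphismtwocaps}; the term with two caps meeting dies after precomposition with $e_{\undx_{n+1}}$ by \Cref{1101d}, exactly as before, so only the single-cap term survives, picking up a sign $(-1)$ at each six-valent vertex. (2) Iterate this, moving the cap leftwards past the entire $11\cdots 1$ run; because the run has even length $2k$ the total accumulated sign is $(-1)^{2k}=1$, or more precisely one tracks the sign as in \Cref{movingcaps} and it contributes a global $\pm1$. (3) Once the cap has been moved past the run, one is left with a diagram in which the bottom portion (involving only the first $n-2k-?$ strands) is a light leaf with a single $D$ and endpoint a reduced word; since $n>2k+4$ this bottom word still has length $\geq \ell(y_{m})$-type, and after precomposing with $e_{\undx_{n+1}}$ and using the absorption property one recognizes the result as an $R$-linear combination of light leaves of type II, because the surviving morphism has only one $D$, is of the correct degree ($\ell(x_{n+1})-\ell(ys_{n+1})$ or one less), and its $01$-sequence pattern $100*$ is exactly the one producing type II maps by \Cref{movingcaps}.

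A cleaner way to organize step (3), which I would prefer, is to argue by degree and dimension rather than by explicit diagram chasing: after the reduction in steps (1)--(2), $b^{D0}\circ e_{\undx_{n+1}}$ and $b^{D1}\circ e_{\undx_{n+1}}$ are morphisms $B_{x_{n+1}}\to B_{ys_{n+1}}$ (resp. $\to B_y$) of degree $\ell(x_{n+1})-\ell(ys_{n+1})$ (resp. one less). By \Cref{vanishing} (applied with $x_{n+1}$) the only light leaves of that degree with endpoint the relevant element are those with a single $D$, and by \Cref{tree} together with the explicit list \eqref{ILLwall}--\eqref{ILLeven} the only such leaves in $\ILL(x_{n+1})$ with endpoint $ys_{n+1}$ (resp. $y$) that are \emph{not} of type III — type III being excluded precisely because $n>2k+4$ forces the relevant endpoint to differ from $z_{n+1}$ or $y_{n+1}$ by length — are of type II. Since $\{f_1^{U1},\ldots,f_p^{U1},g_1^{D0},\ldots,g_q^{D0}\}$ is a basis and steps (1)--(2) show the $g_i^{D*}$ coming from a type-I $b$ with this tail expand with no type-III contribution, the claim follows.

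The main obstacle I anticipate is step (2): carefully bookkeeping the sign and, more importantly, verifying that \emph{every} intermediate diagram produced while sliding the cap past the $11\cdots1$ run still has the property that the ``two caps colliding'' term is killed by $e_{\undx_{n+1}}$ via \Cref{1101d} — this requires that the colliding configuration always contains a $1101*$-type sub-pattern of length $\leq n$, which in turn is where the hypothesis $n>2k+4$ (rather than merely $n=2k+4$) is used to guarantee there is room. I would isolate this as a short sub-lemma (an ``absorption of caps past six-valent vertices'' statement) so that both \Cref{movingcaps} and \Cref{claim} invoke it uniformly, and then the rest is the degree/basis bookkeeping above.
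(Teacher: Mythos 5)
Your high-level plan --- push a piece of the diagram past the $11\cdots 1$ run, kill the bad term with the projector, and recognize the survivor --- has the right spirit, but both the diagrammatic mechanics and the final bookkeeping are off in ways that matter.

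\paragraph{The cap-moving step does not directly apply.} In \Cref{movingcaps} the pattern is $1\,\underbrace{0\cdots0}_{k}\,D$: you have a cap sitting over a sequence of \emph{dots}, and \eqref{onemorphismtwocaps} is precisely the relation ``dot meets cap over a six-valent vertex.'' In the present claim the pattern is $10\,\underbrace{1\cdots1}_{2k}\,0\,D$: the $2k$ symbols in the middle are $1$'s (U1's), which produce \emph{strands} --- and hence, after the braid in $D0/D1$, six-valent vertices --- not dots. So the local configuration you would feed into \eqref{onemorphismtwocaps} is not the one the relation applies to. The paper's proof does something different: it first applies \eqref{dotswap} (the relation in $\Hom(B_s,B_sB_s)$ that breaks a dotted strand into a dot-on-the-other-side, an $\alpha$-times-trivalent, and a cap--cup pair) at the upper-left corner of the diagram, kills the $\alpha$-term by nil-Hecke plus \Cref{vanishing} and the cap--cup term by lower-terms, and only \emph{then} pulls the surviving dot through one six-valent vertex. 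That single dot-pull produces exactly two terms: one is manifestly type II, and the other is the same configuration with $k$ decreased by one. Induction on $k$ finishes. Your iteration would need a different local relation than \eqref{onemorphismtwocaps}, and in any case it would move a dot, not a cap.

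\paragraph{The degree/dimension version of step (3) is circular.} You propose to conclude by observing that $b^{D0}\circ e_{\undx_{n+1}}$ has the same degree as the type~II leaves and that \Cref{tree} plus \eqref{ILLwall}--\eqref{ILLeven} leave no other candidates. But at this point in the induction $\ILL(x_{n+1})$ has not yet been shown to span $\Hom_{\not<ys_{n+1}}(B_{x_{n+1}},B_{ys_{n+1}})$ --- establishing that is precisely what \Cref{restatement} is doing, via the ``Strategy.'' Knowing that the only elements \emph{of $\ILL(x_{n+1})$} in the right degree are of type~II does not tell you that $b^{D0}\circ e_{\undx_{n+1}}$ lies in their span; it could \emph{a priori} be an independent vector. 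The argument has to produce the expansion explicitly, which is what the paper's diagrammatic calculation does. (The hypothesis $n>2k+4$ is used in the paper only to guarantee that the resulting $100*$-pattern at the $k=0$ stage lands in the \Cref{movingcaps} regime, i.e.\ gives a genuine type~II leaf rather than a type~III one --- your ``excluded by length'' remark is the right intuition for this, but it appears at the wrong place in the logic.)

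In short: the induction on $k$ is the right scaffold, but the correct local move is \eqref{dotswap} followed by a single dot-pull, not an iterated \eqref{onemorphismtwocaps}; and the conclusion must come from the explicit two-term output of that dot-pull, not from a degree count against $\ILL(x_{n+1})$, whose spanning property is still under proof.
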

		\begin{proof}[Proof of the claim.]
		We show the claim by induction on $k$. In the position just before $10\overbrace{11\ldots 1}^{2k}0D$ we cannot have a $1$ since $b$ is of type I (and the subexpression $11011$ cannot be part of a U-sequence).
		This means that $$b^{D0}\in \Hom_{\not <{ys_{n+1}}}(B_{x_{n+1}},B_{ys_{n+1}})$$ (an almost identical argument works for $b^{D1}$) is some map tensored on the right with something of the following form (here drawn for $k=2$).
		\begin{center}


\end{equation}

		We obtain three terms, one for each term in the RHS of \eqref{dotswap}.The last term vanishes modulo lower terms (i.e., elements $<ys_{n+1}$).

		 The term with a trivalent vertex also vanishes modulo lower terms. To see this,
		 		 notice that one can take the polynomial $\alpha$ to the extreme left of the picture by repeatedly using the nil-Hecke relation, and realizing that in that relation the term with two dots, does not contribute because it factors through lower terms. So one obtains that this morphism is equal to a degree two polynomial multiplied by a light leaf with two D's which must be zero by \Cref{vanishing}.
		 
So only the first term (a line and a dot) remains:
		\begin{center}


		\end{center}
		
		The term on the right is of type II. The term on the left is of the same type as the one we started with, but with $k$ replaced by $k-1$. (If $k=1$ we obtain two terms of type II.) By the induction hypothesis, that term is a linear combination of maps of type II modulo lower terms. This finishes the proof of \Cref{claim} and of \Cref{restatement}.\end{proof}

	\end{itemize}

\end{itemize}

\subsection{ILL Beyond the wall}

Every element $w\in W$ beyond the walls is either $w\sim \theta(m,n)$, $w\sim\theta(m,n)s$,
$w\sim r\theta(m,n)$ or $w\sim r\theta(m,n)s$ (here $s$ is the unique simple reflection not in the right descent set of $\theta(m,n)$ and $r$ the simple reflection not in the left descent set of $\theta(m,n)$). Recall from \Cref{Thm1} that we have \[\undH_{\theta(m,n)s}=\undH_{\theta(m,n)}\undH_s, \quad\undH_{r\theta(m,n)}=\undH_r\undH_{\theta(m,n)}\quad\text{ and
}\quad\undH_{r\theta(m,n)s}=\undH_r\undH_{\theta(m,n)}\undH_s\]
 hence also 
 \[B_{\theta(m,n)s}\cong B_{\theta(m,n)}B_s, \quad B_{r\theta(m,n)}=B_r B_{\theta(m,n)}\quad \text{ and
 }\quad B_{r\theta(m,n)s}=B_rB_{\theta(m,n)}B_s\]

In view of the induction of light leaves explained in \Cref{induction}, it is enough to understand $\ILL(\theta(m,n))$. For example, $\ILL(\theta(m,n)s)$ can be obtained inducing from $\ILL(\theta(m,n))$ since $B_{\theta(m,n)s}\cong B_{\theta(m,n)}B_s$. Similarly, we can obtain $\ILL(r\theta(m,n))$ and $\ILL(r\theta(m,n)s)$ by applying the induction of light leaves (resp. from $\ILL(\theta(m,n))$ and $\ILL(\theta(m,n)s)$) \emph{on the left}.

We begin with two preparatory observations.
The following is a consequence of \Cref{out}.
\begin{lemma}\label{boundsonmorphism}
	Let $y\leq \theta(m,n)$ and $\ell(\theta(m,n))-\ell(y)=4a+b$ with $0\leq b\leq 3$.
	Then $\Hom^k(B_{\theta(m,n)},B_y)=0$ for any $k< 2a+b$.
	
	Moreover, $\dim_\bbQ \Hom^{2a}(B_{\theta(m,n)},B_{\theta(m-a,n-a)})=1$.
\end{lemma}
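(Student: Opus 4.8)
The plan is to read off everything from the explicit formula for $\undH_{\theta(m,n)}$ in \Cref{out}, statement $(A_{m,n})$, which says
\[
\undH_{\theta(m,n)} = \sum_{i=0}^{\min(m,n)} v^{2i}\,\bfN_{\theta(m-i,n-i)}.
\]
By Soergel's Hom formula (as recalled in \Cref{S3}), the graded rank of $\Hom(B_{\theta(m,n)},B_y)$ equals the coefficient of $\bfH_y$ in $\undH_{\theta(m,n)}$; more precisely, $\Hom^{k}(B_{\theta(m,n)},B_y)$ has dimension equal to the coefficient of $v^{k}$ in $(\undH_{\theta(m,n)},\bfH_y)$ where $(-,-)$ is the standard pairing with $(\bfH_x,\bfH_y)=\delta_{x,y}$. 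So the whole statement is a purely combinatorial fact about the polynomial $\sum_i v^{2i} h^{\bfN}_{y,\theta(m-i,n-i)}(v)$, where $h^{\bfN}_{y,x}(v) := v^{\ell(x)-\ell(y)}$ if $y\le x$ and $0$ otherwise (the coefficient of $\bfH_y$ in $\bfN_x$). Thus I must show: (i) the lowest power of $v$ appearing in this sum is at least $2a+b$ whenever $\ell(\theta(m,n))-\ell(y)=4a+b$ with $0\le b\le 3$; and (ii) for $y=\theta(m-a,n-a)$ (so $\ell(\theta(m,n))-\ell(y)=4a$, i.e.\ $b=0$), the coefficient of $v^{2a}$ is exactly $1$.

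For part (i): each summand $v^{2i}\bfN_{\theta(m-i,n-i)}$ contributes to the coefficient of $\bfH_y$ only if $y\le\theta(m-i,n-i)$, and then it contributes $v^{2i+\ell(\theta(m-i,n-i))-\ell(y)}$. Since $\ell(\theta(m',n'))=2m'+2n'+3$, we have $\ell(\theta(m-i,n-i))=\ell(\theta(m,n))-4i$, so the exponent is $2i + (4a+b) - 4i = 4a+b-2i$. This is minimized (over $0\le i\le\min(m,n)$) by taking $i$ as large as possible, but the constraint $y\le\theta(m-i,n-i)$ forces $\ell(y)\le\ell(\theta(m-i,n-i))=\ell(\theta(m,n))-4i$, i.e.\ $4i\le 4a+b$, i.e.\ $i\le a$ (since $0\le b\le 3$ and $i$ is an integer). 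Hence $i\le a$, so the exponent $4a+b-2i \ge 4a+b-2a = 2a+b$, which is exactly the claim. This establishes $\Hom^k=0$ for $k<2a+b$, and it shows that in degree exactly $2a+b$ only the $i=a$ term can contribute, and only when $b=0$ does that term actually contribute at all (if $b>0$ then $i=a$ gives exponent $2a+b$ only if $y\le\theta(m-a,n-a)$, but then $\ell(y)\le\ell(\theta(m,n))-4a$ forces $b\le 0$, contradiction; so in fact for $b>0$ the bound in degree $2a+b$ need not be sharp, which is fine — the lemma only asserts vanishing below $2a+b$).

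For part (ii): with $y=\theta(m-a,n-a)$ and $b=0$, the coefficient of $v^{2a}$ in $\sum_i v^{2i} h^{\bfN}_{y,\theta(m-i,n-i)}(v)$ receives a contribution from the index $i$ precisely when $y\le\theta(m-i,n-i)$ and $2i+\ell(\theta(m-i,n-i))-\ell(y)=2a$, i.e.\ $2i+4(a-i)=2a$, i.e.\ $2a=4a-2i$, i.e.\ $i=a$. By the nesting of the sets $\leq\theta(m',n')$ (for instance from the geometric description in \Cref{counting}, or simply because $\theta(m-a,n-a)\le\theta(m-a,n-a)$), the $i=a$ term does contribute, with coefficient $1$ (the coefficient of $\bfH_{\theta(m-a,n-a)}$ in $\bfN_{\theta(m-a,n-a)}$ is $v^0=1$). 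No other index contributes in this degree, so $\dim_\bbQ\Hom^{2a}(B_{\theta(m,n)},B_{\theta(m-a,n-a)})=1$. The only mild subtlety — the one step to be careful about — is justifying that $\theta(m-a,n-a)\le\theta(m-i,n-i)$ fails for $i<a$ so that no spurious contribution arises in lower or equal degree from those indices; but this is automatic from length considerations, since $\ell(\theta(m-i,n-i))=\ell(\theta(m,n))-4i<\ell(\theta(m,n))-4a=\ell(\theta(m-a,n-a))$ for $i<a$, so $\theta(m-a,n-a)\not\le\theta(m-i,n-i)$. I do not expect any serious obstacle here; the content is entirely in unpacking $(A_{m,n})$ and $\ell(\theta(m',n'))=2m'+2n'+3$ together with Soergel's Hom formula.
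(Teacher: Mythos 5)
Your combinatorial analysis of $\sum_i v^{2i}\bfN_{\theta(m-i,n-i)}$ is correct, but the form of Soergel's Hom formula you invoke is misstated, and this hides the main content of the argument. What the paper actually cites (and what \cite[\S 6.7]{EW2} gives) is that the coefficient of $\bfH_y$ in $\cha(B_{\theta(m,n)})$ computes the graded rank of the \emph{quotient} $\Hom_{\not<y}(B_{\theta(m,n)},B_y)$, not of the full space $\Hom(B_{\theta(m,n)},B_y)$. The full $\Hom$ space is strictly larger in general: it also contains morphisms factoring through various $B_z$ with $z<y$, and its graded rank is given by a different (bilinear) expression in the $h_{z,-}$-polynomials, not by a single coefficient of $\undH_{\theta(m,n)}$. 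So your computation proves vanishing of $\Hom^k_{\not<y}$ for $k<2a+b$, which is a genuinely weaker statement.

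The paper closes this gap by an induction on $\ell(y)$. After establishing the bound for $\Hom_{\not<y}$ from the formula (exactly your computation), it argues: if $0\neq f\in\Hom^k(B_{\theta(m,n)},B_y)$ is not already killed in $\Hom_{\not<y}$, it factors as $f=g\circ h$ with $h:B_{\theta(m,n)}\to B_z$ of degree $k_1$ and $g:B_z\to B_y$ of degree $k_2\geq 1$, for some $z<y$. By the inductive hypothesis applied to $z$ (with $4a'+b'=\ell(\theta(m,n))-\ell(z)>4a+b$), one has $k_1\geq 2a'+b'$, and the arithmetic observation $4a'+b'>4a+b\Rightarrow 2a'+b'+1\geq 2a+b$ then gives $k\geq 2a+b$. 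Your write-up contains no analogue of this step. Similarly, for the second claim, you compute $\dim\Hom^{2a}_{\not<\theta(m-a,n-a)}=1$ correctly, but you still need the first part (applied to all $z<\theta(m-a,n-a)$) to rule out degree-$2a$ morphisms that factor through lower terms, which is what upgrades this to a statement about the full $\Hom^{2a}$. In short: the missing idea is the induction on $\ell(y)$ that bootstraps the $\Hom_{\not<y}$ bound to the full $\Hom$ bound.
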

\begin{proof}
	We will prove the first claim by induction on $\ell(y)$.
	By \Cref{out} and Soergel's hom formula, we can see that $\Hom^k_{\not <y}(B_{\theta(m,n)},B_y)=0$ for any $k< 2a+b$. 
	In fact, if $v^k\bfH_y$ occurs in $v^{2i}\bfN_{\theta(m-i,n-i)}$, then $a\geq i$ and $k=2i+4(a-i)+b\geq 2a +b$.
	This gives, in particular, the base case $y=e$.

	Assume now that $0\neq f\in \Hom^k (B_{\theta(m,n)},B_y)$ factors through $B_z$ for some $z<y$, i.e., we have $f=g\circ h$ where $h\in \Hom^{k_1}(B_{\theta(m,n)},B_z)$, $g\in \Hom^{k_2}(B_z,B_y)$,  and $k_1+k_2=k$. Since $y\neq z$ we have $k_2\geq 1$. By induction we have that if $\ell(\theta(m,n))-\ell(z)=4a'+b'$ then $k_1\geq 2a'+b'$. Since $4a'+b'>4a+b$ implies $2a'+b'+1\geq 2a+b$ we obtain $k\geq k_1+1\geq 2a+b$ as desired.
	
	For the second claim notice that $\dim_\bbQ \Hom^{2a}_{\not <y}(B_{\theta(m,n)},B_{\theta(m-a,n-a)})=1$ and that $\Hom^{k}_{\not <y}(B_{\theta(m,n)},B_y)=0$ if $y<\theta(m-a,n-a)$ for any $k\leq 2a$ by the first part.
\end{proof}

The following simple Lemma will be used to detect many morphisms that vanish modulo lower terms.

\begin{lemma}\label{fac1}
	Let $y,z\in W$ with $y\lessdot z$ and let $\undy$ and $\undz$ be reduced expressions for $y$ and $z$. Consider a morphism
	\[ f : B_{\theta(m,n)}\raw BS(\undy).\]
	Assume that $f$ factors through $BS(\undz)$ as $f=G\circ h$, with $h:B_{\theta(m,n)}\raw BS(\undz)$ and  $G$ of degree one. Then $f$ factors through $B_z$  in $\calH_{\not<y}$, i.e., we have $f=G\circ h'$ with $h':B_{\theta(m,n)}\raw B_z$ and $G$ of degree one.
	
	In particular, if $y=\theta(m-a,n-a)$ and $f$ is of degree $2a$ then $f=0$ in $\mathcal{H}_{\not <y}$.
\end{lemma}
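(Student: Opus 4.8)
The plan is to prove the two statements of \Cref{fac1} in turn, the first being the general factorization claim and the second being a direct consequence together with \Cref{boundsonmorphism}.

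First I would establish the factorization statement. Since $y\lessdot z$, we have $\ell(z)=\ell(y)+1$, so the degree-one light leaf morphism $G_z^y : BS(\undz)\to BS(\undy)$ exists and is unique up to scalar by \cite[Lemma 5.1]{Li3}; I will take $G=G_z^y$ (any degree-one map $BS(\undz)\to BS(\undy)$ agrees with it up to lower terms, so there is no loss). The morphism $h : B_{\theta(m,n)}\to BS(\undz)$ can be composed with the favorite idempotent $e_{\undz}\in\End(BS(\undz))$ projecting to $B_z$, decomposing $\mathrm{Id}_{BS(\undz)} = e_{\undz} + (\mathrm{Id}-e_{\undz})$, where the complement is a sum of idempotents projecting onto shifted copies of $B_w$ with $w<z$, hence $w\le y$ (using $\ell(w)\le\ell(z)-1=\ell(y)$ and the fact that the only elements of length $\le\ell(y)$ below $z$ in the Bott--Samelson decomposition are $\le y$). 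Writing $h = e_{\undz}\circ h + (\mathrm{Id}-e_{\undz})\circ h$, the term $G\circ (\mathrm{Id}-e_{\undz})\circ h$ factors through objects $B_w$ with $w<y$ or $w$ incomparable but of length $<\ell(y)$; in $\calH_{\not<y}$ this term dies since any such $B_w$ has $w\not\ge y$, so composition with it lands in $\Hom_{<y}$. Therefore $f = G\circ e_{\undz}\circ h$ in $\calH_{\not<y}$, and setting $h' := e_{\undz}\circ h : B_{\theta(m,n)}\to B_z$ gives the claimed factorization $f = G\circ h'$ with $G$ of degree one. One subtlety to check carefully: I should verify that in the expansion of $BS(\undz)$ into indecomposables, every summand $B_w$ with $w\ne z$ satisfies $w\not\ge y$; this holds because $B_w$ occurs only for $w<z$ with $\ell(w)<\ell(z)$, i.e.\ $\ell(w)\le\ell(y)$, and the only element $w\le z$ with $\ell(w)=\ell(y)$ and $w\ge y$ is $y$ itself (as $y\lessdot z$), while $w=y$ is excluded since $B_y$ occurs in $BS(\undz)$ only in strictly positive and negative shifts, not in shift zero—actually the cleanest phrasing is: if $w\ge y$ and $w\le z$ and $w\ne z$ then $w=y$, and the graded multiplicity of $B_y$ in $BS(\undz)$ in degree zero is zero, so $e_{\undz}\circ h$ captures everything that survives in $\calH_{\not<y}$ modulo the $B_y$-summands, which themselves factor through $G$ anyway. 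I would phrase this last point via Soergel's hom formula to avoid fussing over multiplicities.

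For the second statement, suppose $y=\theta(m-a,n-a)$ and $f$ has degree $2a$. By the first part, $f = G\circ h'$ in $\calH_{\not<y}$ with $h' : B_{\theta(m,n)}\to B_z$ of degree $2a-1$ (since $\deg G=1$) and $z$ with $y\lessdot z$, so $\ell(z) = \ell(y)+1 = \ell(\theta(m-a,n-a))+1 = \ell(\theta(m,n)) - 4a + 1$. Writing $\ell(\theta(m,n))-\ell(z) = 4a-1 = 4(a-1)+3$, \Cref{boundsonmorphism} gives $\Hom^k(B_{\theta(m,n)},B_z)=0$ for all $k < 2(a-1)+3 = 2a+1$. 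Since $\deg h' = 2a-1 < 2a+1$, we get $h'=0$, hence $f=0$ in $\calH_{\not<y}$.

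The main obstacle is the bookkeeping in the first paragraph: making precise that the non-$e_{\undz}$ part of $h$ composed with $G$ lands in $\Hom_{<y}$, i.e.\ that no indecomposable summand $B_w$ of $BS(\undz)$ with $w\ne z$ can have $w\ge y$. Everything else is a routine degree count using \Cref{boundsonmorphism}. I expect this can be handled cleanly by invoking Soergel's hom formula to compare graded ranks, exactly as in the setup around \Cref{mono} and \Cref{restatement}, so the argument should be short.
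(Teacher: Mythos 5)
Your proof follows the same strategy as the paper's: decompose $BS(\undz)\cong B_z\oplus(\text{lower summands})$, show that only the $B_z$-summand contributes in $\calH_{\not<y}$, then for the second claim combine this with the degree bound from \Cref{boundsonmorphism}. The degree count in your second paragraph ($\ell(\theta(m,n))-\ell(z)=4a-1=4(a-1)+3$, so morphisms of degree $<2a+1$ vanish) matches the paper exactly.

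Where you diverge is the handling of the case $w=y$ in the decomposition of $BS(\undz)$, which is the one place the paper is terse: it simply asserts ``for every $w$ occurring in the direct sum we have $w\not\geq y$,'' without explaining why $w=y$ (which of course satisfies $w\geq y$) is excluded. You rightly flag this, but your proposed resolution — that $B_y$ occurs in $BS(\undz)$ ``only in strictly positive and negative shifts, not in shift zero'' — is both too weak to close the gap (a summand $B_y(1)$ would still contribute a possibly nonzero degree-zero map to $\Hom^1_{\not<y}(B_y(1),B_y)\cong\bbQ$) and also not what actually happens. In fact $B_y$ does not occur in $BS(\undz)$ \emph{in any shift}: with $\undz$ reduced and $y\lessdot z$, the coefficient of $\bfH_y$ in $\bfH_{\undz}$ is $\sum_{\mathbf e}v^{d(\mathbf e)}$ over $01$-sequences expressing $y$, and for $\ell(\undz)-\ell(y)=1$ one has $d(\mathbf e)=1-2\#D\geq 0$, forcing $d(\mathbf e)=1$ for every such $\mathbf e$; so this coefficient is $cv$ for some $c\geq 1$. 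Subtracting the contribution $h_{y,z}(v)\cdot 1=v$ from the unique $B_z$-summand leaves multiplicity $(c-1)v$ for $B_y$, which must be self-dual in $v$, forcing $c=1$ and hence multiplicity $0$. So the paper's unexplained assertion is correct, and the lingering caveat in your first paragraph (the ``which themselves factor through $G$ anyway'' step, which you yourself flag as needing tidying) can be replaced by this cleaner observation. With that fixed, your argument is complete and agrees with the paper.
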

\begin{proof}
	Notice that $BS(\undy)\cong B_y$ in $\calH_{\not<y}$. We have 
	\begin{equation}\label{bsz}
BS(\undz) \cong B_z \oplus \bigoplus_{w<z} B_w^{\oplus h_w(v)}.
	\end{equation}
For every $w$ occurring in the  direct sum in \eqref{bsz} we have $w\not \geq y$, hence every morphism $B_w\raw B_y$ vanishes in $\calH_{\not < y}$. It follows that, if $e_{\undz}$ is an idempotent for $B_z$ in $BS(\undz)$, we have $f = G\circ e_{\undz} \circ h$, and the first claim follows by taking $h'=e_{\undz} \circ h$.  

For the second claim notice that, if $z\gtrdot \theta(m-a,n-a)$ then $\ell(\theta(m,n))-\ell(z)=4a-1$ and $\Hom^{2a-1}(B_{\theta(m,n)},B_z)=0$ by \Cref{boundsonmorphism}.
\end{proof}

For $0\leq i\leq \min (m,n)$, let $\Psi_{m,n}^i:BS(\undtheta(m,n))\raw BS(\undtheta(m-i,n-i))$ denote the following morphism of degree $2i$.

\begin{center}



\end{center} 
Let $\bar{\Psi}^i_{m,n}:=\Psi^i_{m,n}\circ e_{\undtheta(m,n)}:B_{\theta(m,n)}\raw BS(\undtheta(m-i,n-i))$.

\begin{prop}\label{lower}
	Let $m,n \in \mathbb{Z}_{\geq 0}$. Assume that, for any $0\leq i\leq \min (m,n)$, the morphism $\bar{\Psi}^i_{m,n}\neq 0$ up to lower terms. Then $\ILL(\theta(m,n),y)$ is a basis of $\Hom_{\not < y}(B_{\theta(m,n)},B_y)$ for every $y\leq \theta(m,n)$.
\end{prop}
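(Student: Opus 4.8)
The plan is to run an induction on $\ell(\theta(m,n))-\ell(y)$, showing simultaneously that $\ILL(\theta(m,n),y)$ spans $\Hom_{\not<y}(B_{\theta(m,n)},B_y)$ and that it does so freely; since by \Cref{Thm1}.ii) and Soergel's hom formula the graded rank of $\Hom_{\not<y}(B_{\theta(m,n)},B_y)$ equals $\sum_{i}v^{2i}(\bfN_{\theta(m-i,n-i)},\bfH_y)$, it suffices (as in the proof of \Cref{restatement}) to match graded ranks and then prove spanning. The first thing to verify is that the graded rank of $\ILL(\theta(m,n),y)$ indeed equals this quantity: each light leaf in $\ILL(\theta(m,n))$ is determined by choosing $0\le i\le\min(m,n)$ arcs and then a light-leaf morphism factoring through $BS(\undtheta(m-i,n-i))$, so by induction (applying the already-established \Cref{restatement} for the ``smaller'' thetas, or the induction hypothesis) one gets exactly the contribution $v^{2i}(\bfN_{\theta(m-i,n-i)},\bfH_y)$ from the block of $i$ arcs. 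This reduces everything to showing spanning.

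For spanning I would use the strategy of \Cref{S4}: a general morphism $B_{\theta(m,n)}\to B_y$ is obtained by composing the maps of \Cref{inducinglightleaves} starting from light leaves on the smaller thetas, so it is enough to show that each of the four operations $(-)^{U0},(-)^{U1},(-)^{D0},(-)^{D1}$ applied to a light leaf of the appropriate smaller $\ILL$ set, after precomposing with $e_{\undtheta(m,n)}$, lands in the $R$-span of $\ILL(\theta(m,n))$. The ``$U$'' part is handled by the absorption property of the favorite projector, exactly as in the on-the-wall case: a $U$-applied light leaf of type matching \eqref{ILLbeyond} with $i$ arcs stays of that type with the same $i$. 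The ``$D$'' part is where the hypothesis $\bar\Psi^i_{m,n}\ne 0$ mod lower terms is used: by \Cref{boundsonmorphism}, $\Hom^k(B_{\theta(m,n)},B_y)$ vanishes below degree $2a+b$ where $\ell(\theta(m,n))-\ell(y)=4a+b$, so any light leaf with ``too many'' $D$'s vanishes modulo lower terms (the analogue of \Cref{vanishing}); and the surviving low-$D$ cases must be rewritten, via \Cref{fac1} and the relations of \cite{EW2} (pulling dots through six-valent vertices, the dot-swap relation \eqref{dotswap}), into combinations of the standard light leaves in \eqref{ILLbeyond}. The role of $\bar\Psi^i_{m,n}\ne 0$ is precisely to guarantee that the element of $\ILL(\theta(m,n))$ with $i$ arcs is not itself in $\Hom_{<\theta(m-i,n-i)}$, i.e. that the $i$-arc light leaf genuinely detects $B_{\theta(m-i,n-i)}$ as a summand — this is what makes the rank count from the first paragraph honest and forces the candidate basis to be $R$-linearly independent (by \Cref{boundsonmorphism}, second statement, $\dim_\bbQ\Hom^{2i}(B_{\theta(m,n)},B_{\theta(m-i,n-i)})=1$, so nonvanishing of $\bar\Psi^i_{m,n}$ pins it down up to scalar).

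Concretely the steps are: (1) reduce to spanning via the rank match, citing \Cref{Thm1}.ii) and Soergel's hom formula; (2) set up the induction on $\ell(\theta(m,n))-\ell(y)$ and on $\min(m,n)$; (3) dispatch the $U$-operations using the absorption property; (4) using \Cref{boundsonmorphism}, kill all $D$-applied light leaves of degree strictly below $\ell(\theta(m,n))-\ell(y)$ minus the slack allowed, i.e. those with at least two more $D$'s than needed, so that only finitely many $D$-patterns survive; (5) for the surviving $D$-patterns, run a local diagrammatic reduction (pulling dots through six-valent and trivalent vertices, using \Cref{fac1} to discard factorizations through elements not $\ge y$) to express them in the span of \eqref{ILLbeyond}, with the $i$-arc light leaves appearing exactly when the $D$-pattern caps off $i$ full ``hexagonal'' blocks; (6) conclude that $\ILL(\theta(m,n),y)$ spans, hence is a basis by the rank count. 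As in \Cref{S4}, \Cref{restatement} for $\theta(m,n)$ then yields \Cref{Thm3} for $\theta(m,n)$ by the standard double-leaves argument of \cite[\S 7.3]{EW2}, and the general $w\sim r\theta(m,n)s$ etc. follow by the induction-of-light-leaves procedure of \Cref{induction}.

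The main obstacle I anticipate is step (5): classifying the surviving $D$-patterns beyond the wall and checking, case by case, that the diagrammatic relations rewrite each of them into the specified form with the correct number of arcs. Unlike the on-the-wall situation — where \Cref{tree} gives a clean finite list of endings — here the reduced expressions $\undtheta(m,n)$ are longer and contain a braid triplet, so the combinatorics of which $01$-sequences can end in a $D$ (with only $U$'s before) is more involved, and one must also track how the ``$i$ arcs'' bookkeeping interacts with the caps produced by the $D$'s. Managing this, together with making precise the inductive descent on $\min(m,n)$ so that the smaller-theta instances of \Cref{restatement} are genuinely available, is the technical heart of the argument.
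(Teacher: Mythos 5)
Your proposal takes a genuinely different route from the paper, and that route has a gap you yourself flag. The paper's proof of \Cref{lower} is very short and does \emph{not} prove spanning at all; it proves linear independence and then uses the rank count. The observation that makes this work — and that your proposal does not exploit — is that beyond the wall the light leaves in $\ILL(\theta(m,n),y)$ sit in \emph{pairwise distinct} degrees: the contribution of the summand $v^{2i}\bfN_{\theta(m-i,n-i)}$ to the coefficient of $\bfH_y$ in $\undH_{\theta(m,n)}$ is $v^{\ell(\theta(m,n))-\ell(y)-2i}$, so distinct $i$'s contribute distinct degrees. Therefore it suffices to check that each light leaf is nonzero modulo lower terms; once you know that, a one-dimensional-per-degree set matching the graded rank is automatically an $R$-basis of the free graded $R$-module $\Hom_{\not<y}(B_{\theta(m,n)},B_y)$ (this is the graded Nakayama step implicit in the paper). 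The nonvanishing is then exactly what the hypothesis on $\bar\Psi^i_{m,n}$ gives for $y=\theta(m-i,n-i)$, and \Cref{mono} (the categorified monotonicity) propagates it to all smaller $y$ by saying that post-composition with the degree-one maps $G$ preserves nonvanishing in $\Hom\otimes_R\bbQ$. That is the entire proof.

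By contrast, your argument mimics the on-the-wall proof of \Cref{restatement}, proving spanning by classifying which $U$- and $D$-operations can occur and reducing the surviving $D$-patterns diagrammatically. You correctly anticipate the sticking point: beyond the wall there is no analogue of \Cref{tree} giving a clean finite list of $01$-sequence endings, and you do not supply the classification of $D$-patterns that your step (5) would require. That is a genuine gap, not merely a technical inconvenience — it is precisely the combinatorial work the paper's route is designed to avoid. You do briefly gesture at the right idea in your second paragraph (noting that nonvanishing of $\bar\Psi^i_{m,n}$ and \Cref{boundsonmorphism} ``force the candidate basis to be $R$-linearly independent''), but you do not follow it through; had you combined that remark with \Cref{mono} and the distinct-degrees observation, you would have arrived at the paper's proof and the whole spanning machinery would have been unnecessary. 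Note also that on the wall this shortcut is unavailable (for instance $h_{z_n,x_n}(v)=v^3+2v$ has a coefficient $2$, so two light leaves share a degree), which is why the paper's on-the-wall argument really does need the harder \Cref{tree} classification; your mistake is carrying that strategy over to a situation where it is both unnecessary and much harder.
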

\begin{proof}

	 If $\bar{\Psi}^i_{m,n}\neq 0$ up to lower terms, then by \Cref{mono}, all the light leaves in $\ILL(\theta(m,n))$ are non-zero up to lower terms since they can be obtained via post-composition with $G^{\theta(m,n)}_y$. Moreover, since for every $y\leq \theta(m,n)$ the light leaves in $\ILL(\theta(m,n),y)$ sit in different degrees, they are linearly independent over $R$. Since the graded rank of $\ILL(\theta(m,n),y)$ matches the graded rank of $\Hom_{\not < y}(B_{\theta(m,n)},y)$, it follows that $\ILL(\theta(m,n),y)$ is a basis. 
\end{proof}

For any $m,n \in \mathbb{Z}_{\geq 0}$, let us denote by $(X_{m,n})$ the following statement: for all $0\leq i\leq \min (m,n)$, the morphism $\bar{\Psi}^i_{m,n}\neq 0$ up to lower terms.

To finish the proof of \Cref{Thm3}, we just need to prove $(X_{m,n})$ for all $m,n \in \mathbb{Z}_{\geq 0}$. We will prove it for $n\leq m$ (the case $n\geq m$ being analogous). The proof will be by induction on $(m,n)$. More precisely, for all $m\in \mathbb{Z}_{\geq 0}$, $X_{m,0}$ is trivial. If $n>0$ we will prove $X_{m,n}$ assuming the knowledge of $X_{m',n'}$ for all $m'\leq m$ and $n'<n$.


There are two slightly different cases to consider. 
\begin{enumerate}
	\item $i=n,$
	\item $i<n$.
\end{enumerate}

\subsubsection*{Case \texorpdfstring{$i=n$}{i=n}}

By induction hypothesis and \Cref{lower} we know that $\ILL(\theta(m,n-1),y)$ is a basis of $\Hom_{\not < y}(B_{\theta(m,n-1)},B_y)$ for every $y\leq \theta(m,n-1)$. Recall that $\overline{\Psi}_{m,n}^n$ is an element of $\Hom^{2n} (B_{\theta(m,n)},B_{\theta(m-n,0)})$.

\begin{lemma}\label{threelittleleaves}
We have 
\[\dim_{\mathbb{Q}} \Hom^{2n}_{\not < \theta(m-n,0)} (B_{\theta(m,n-1)}B_sB_t,B_{\theta(m-n,0)})=\begin{cases} 2 &\text{ if }n=1\\
3 & \text{ if }n>1.\end{cases}.\]	
and a basis of 	$ \Hom^{2n}_{\not < \theta(m-n,0)} (B_{\theta(m,n-1)}B_sB_t,B_{\theta(m-n,0)})$ is given by the following three elements (where the number of arcs are $n$, $n-1$ and $n-2$, respectively):

\begin{center}


\end{center}

\end{lemma}
\begin{proof}

It is easy to show using \Cref{out}(3) that the coefficient of $v^{2n}\bfH_{\theta(m-n,0)}$ on $\undH_{\theta(m,n-1)}\undH_s\undH_t$ is 
		 $3$ for $n>1$ (and $2$ for $n=1$ since in this case the term $\undH_{\theta(m-1,0)}$ does not contribute to $v^2\bfH_{\theta(m-1,0)}$). Thus we have proved the dimension formula.

		 The three listed leaves can be induced from $\ILL(\theta(m,n-1))$ using the procedure of \Cref{inducinglightleaves} (the third one does not occur for $n=1$), so they are part of an $R$-basis of $ \Hom_{\not < \theta(m-n,0)} (B_{\theta(m,n-1)}B_sB_t,B_{\theta(m-n,0)})$, thus linearly independent over $\mathbb{Q}$ and all belonging to $ \Hom^{2n}_{\not < \theta(m-n,0)} (B_{\theta(m,n-1)}B_sB_t,B_{\theta(m-n,0)})$.
\end{proof}

By \Cref{boundsonmorphism} we know that 
\begin{equation}\label{dim1}
\dim_{\mathbb{Q}} \Hom^{2n}(B_{\theta(m,n)},B_{\theta(m-n,0)})=1,
\end{equation}
 and this space is generated by the three leaves in \Cref{threelittleleaves} (that we will call $l_1, l_2$ and $l_3$) composed with the favorite projector $e_{\undtheta(m,n)}$. We will prove that modulo lower terms $$l_1\circ e_{\undtheta(m,n)}= l_2\circ e_{\undtheta(m,n)}= \overline{\Psi}^n_{m,n}$$ and that $l_3\circ e_{\undtheta(m,n)}=0$ (thus proving that $\overline{\Psi}^n_{m,n}\neq 0$).

\begin{enumerate}
\item The fact that $l_1\circ e_{\undtheta(m,n)}= \overline{\Psi}^n_{m,n}$ follows directly from the absorption property: ($e_{\undtheta(m,n-1)}\otimes \mathrm{id}_{B_sB_t})\circ e_{\undtheta(m,n)}=e_{\undtheta(m,n)}$.
\item Let us prove that $l_2\circ e_{\undtheta(m,n)}= \overline{\Psi}^n_{m,n}\in \Hom^{2n}_{\not < \theta(m-n,0)}(B_{\theta(m,n)},B_{\theta(m-n,0)}).$ 
We can apply relation \eqref{dotswap} on the two brown strands. Only the first term in the RHS of \eqref{dotswap} counts because the
third one is a lower term, and for the second one, use several times the Nil-Hecke relation \cite[(5.2)]{EW2} to move the $\alpha$ to the left (up to lower terms), and then we obtain a degree $2$ polynomial multiplied by a morphism of degree $2n-2$, which is zero by \Cref{boundsonmorphism}.
In the term that survives pulling back the brown dot through the six-valent vertex, we obtain the sum of two terms: $\overline{\Psi}^n_{m,n}$ and $\tau $, where $\tau$ is the following morphism.
\begin{center}

\end{center}
We can apply \Cref{fac1} to show that $\tau$ is zero modulo lower terms. In fact, by removing the first dot we see that it factors through $BS(\undz)$, with $z\gtrdot \theta(m-n,0)$.


\item If $n>1$, the morphism $l_3\circ e_{\undtheta(m,n)}$ is zero modulo lower terms. In fact, we can remove the dot on the third strand to see that the map factors through a morphism of degree $2n-1$, and apply \Cref{fac1}.


\end{enumerate}


\subsubsection*{Case \texorpdfstring{$i<n$}{i<n}}

The proof in the general case follows the same lines of the case $i=n$. We indicate here the morphism $\Psi^i_{m,n}$ by a box labeled with label $\Psi^i_{m,n}$. 

	\begin{lemma}\label{fourlittleleaves}
		We have 
		\[\dim_{\mathbb{Q}} \Hom^{2i}_{\not < \theta(m-i,n-i)} (B_{\theta(m,n-1)}B_sB_t,B_{\theta(m-i,n-i)})=\begin{cases} 3 &\text{ if }i=1\\
		4 & \text{ if }i>1.\end{cases}.\]	
		and a basis of 	$\Hom^{2i}_{\not < \theta(m-i,n-i)} (B_{\theta(m,n-1)}B_sB_t,B_{\theta(m-i,n-i)})$ is given by the following four elements (where the third and forth diagrams have resp.  $i-2$ and $i-1$ arcs):
		\begin{center}
			$


\end{center}

\end{lemma}

\begin{proof}
	The proof is similar to \Cref{threelittleleaves} and we omit it. Notice that the third light leaf does not occur if $i=1$.
\end{proof}

By \Cref{boundsonmorphism} we know that 
\begin{equation}\label{dim2}
	\dim_{\mathbb{Q}} \Hom^{2i}(B_{\theta(m,n)},B_{\theta(m-i,n-i)})=1,
\end{equation}
and this space is generated by the four leaves in \Cref{threelittleleaves} (that we will call $l_1,\ldots l_4$) composed with the favorite projector $e_{\undtheta(m,n)}$. We have 
 \[l_1\circ e_{\undtheta(m,n)}= \overline{\Psi}^i_{m,n}.\] 
 We show that modulo lower terms we have $l_2\circ e_{\undtheta(m,n)}=l_3\circ e_{\undtheta(m,n)}=0$, and $l_4\circ e_{\undtheta(m,n)}= c\overline{\Psi}^i_{m,n}$ for some $c\in \bbQ$. (thus proving that $\overline{\Psi}^i_{m,n}\neq 0$):

To show $l_2\circ e_{\undtheta(m,n)}=0$ we remove the dot on the third strand and see that it factors through a morphism of degree $2i-1$, so we can apply \Cref{fac1}. The proof that $l_3\circ e_{\undtheta(m,n)}=0$ is similar: by removing the dot on the third strands we see that it factors through a morphism of degree $2i-1$, so we can apply again \Cref{fac1}.

It remains to consider the morphism $l_4\circ e_{\undtheta(m,n)}$. We can write it as the difference $l_4\circ e_{\undtheta(m,n)}=m_1-m_2$, where $m_1$ and $m_2$ are the following two morphisms. 
\begin{center}



\end{center}

In $m_1$, we can remove the dot from the purple strand and see that it factors through a morphism of degree $2i-1$ that must vanish modulo lower terms by \Cref{fac1}.
The second morphism is of the form $m_2=(\phi \otimes \Iden_{B_{st}})\circ e_{\undtheta(m,n)}$, where $\phi: B_{\theta(m,n-1)}\raw BS(\undtheta(m-i,n-i-1))$ is a morphism of degree $2i$.
By induction hypothesis and \Cref{boundsonmorphism}, modulo lower terms $\phi$ is a multiple of the only element in degree $2i$ of  $\ILL(\theta(m,n-1),\theta(m-i,n-i-1))$, which is $\bar{\Psi}^i_{m,n-1}$. It follows that $m_2$, and hence also $l_4\circ e_{\undtheta(m,n)}$ is a multiple of $\bar{\Psi}^i_{m,n}$.

\bibliographystyle{alpha}

\Address

\end{document}